\newtheorem{theorem}{Theorem}[section]
\newtheorem{lemma}[theorem]{Lemma}
\newtheorem{proposition}[theorem]{Proposition}
\newtheorem{corollary}[theorem]{Corollary}
\theoremstyle{definition}
\newtheorem{definition}[theorem]{Definition}
\newtheorem{remark}[theorem]{Remark}
\numberwithin{equation}{section}
\begin{document}

\baselineskip=15pt

\title[Symplectic geometry of moduli of framed Higgs bundles]{Symplectic 
geometry of a moduli space of framed Higgs bundles}

\author[I. Biswas]{Indranil Biswas}

\address{School of Mathematics, Tata Institute of Fundamental
Research, Homi Bhabha Road, Mumbai 400005, India and
Mathematics Department, EISTI-University Paris-Seine, Avenue du parc, 95000,
Cergy-Pontoise, France}

\email{indranil@math.tifr.res.in}

\author[M. Logares]{Marina Logares}

\address{School of Computing Electronics and Mathematics, University of Plymouth, 
Drake Circus, PL4 8AA, Plymouth, United Kingdom} 

\email{marina.logares@plymouth.ac.uk}
 
\author[A. Pe\'on-Nieto]{Ana Pe\'on-Nieto}

\address{Universit\'e de Gen\`eve ⋄ Section de Math\'ematiques,
2-4 Rue du Li\`evre ⋄ C.P. 64 ⋄ 1211 Gen\`eve 4 ⋄ Switzerland }

\email{ana.peon-nieto@unige.ch}

\subjclass[2010]{14D20, 14H60, 53D05}

\keywords{Framed Higgs bundle, holomorphic symplectic form, Poisson map, Hitchin pair, 
deformation.}

\begin{abstract}
Let $X$ be a compact connected Riemann surface and $D$ an effective divisor on $X$. Let ${\mathcal 
N}_H(r,d)$ denote the moduli space of $D$--twisted stable Higgs bundles (a special class of 
Hitchin pairs) on $X$ of rank $r$ and degree $d$. It is known that ${\mathcal N}_H(r,d)$ has a 
natural holomorphic Poisson structure which is in fact symplectic if and only if $D$ is the zero 
divisor. We prove that ${\mathcal N}_H(r,d)$ admits a natural enhancement to a holomorphic 
symplectic manifold which is called here ${\mathcal M}_H(r,d)$.
This ${\mathcal M}_H(r,d)$ is constructed by trivializing, over $D$, the restriction of the vector 
bundles underlying the $D$-twisted Higgs bundles; such objects
are called here as framed Higgs bundles. We also investigate the symplectic 
structure on the moduli space ${\mathcal M}_H(r,d)$ of framed Higgs bundles as well as the Hitchin system associated to it.
\end{abstract}

\maketitle

\section{Introduction}

Since their inception in \cite{H}, Hitchin systems have been a rich source of 
examples of algebraically completely integrable systems. In \cite{H}, Hitchin proved 
that the moduli space of Higgs bundles on a compact Riemann surface is a holomorphic 
symplectic variety that admits an algebraically completely integrable structure which is
provided by what is known as the Hitchin fibration. The symplectic structure of the 
moduli space of Higgs bundles arises from the natural Liouville symplectic structure on the total space 
of the cotangent bundle of the moduli space of vector bundles; this cotangent bundle is 
in fact a Zariski open dense subset of the moduli space of Higgs bundles.

Let $X$ be a compact connected Riemann surface. Fix an effective divisor $D$ on $X$. A Hitchin pair, or more precisely
a $D$-twisted Higgs bundle, on $X$ is a pair of the form $(E,\, \theta)$, where $E$ is a holomorphic vector bundle on $X$ and
$\theta$ is a holomorphic section of the vector bundle
$\text{End}(E)\otimes K_X\otimes {\mathcal O}_X(D)$ with $K_X$ being the
holomorphic cotangent bundle of $X$. Let ${\mathcal N}_H(r,d)$ be the moduli space of stable
$D$-twisted Higgs bundles on $X$
of rank $r$ and degree $d$. It is known that ${\mathcal N}_H(r,d)$ carries a natural holomorphic Poisson structure
\cite{Bo}, \cite{Ma}. This Poisson structure 
coincides with the symplectic structure on the moduli space of stable Higgs bundles
of rank $r$ and degree $d$ (constructed in \cite{H}) when $D$ is the zero divisor. It is also known that this
Poisson structure is not symplectic when the divisor $D$ is nonzero.

Given a holomorphic vector bundle $E$ on $X$ of rank $r$, a framing of $E$ over $D$ is a 
trivialization of the vector bundle $E\vert_D$ over $D$, meaning a holomorphic isomorphism 
$\delta$ of $E\vert_D$ with the trivial vector bundle ${\mathcal O}^{\oplus r}_D$. A framed 
bundle is a holomorphic vector bundle equipped with a framing. A framed Higgs bundle is a 
triple $(E,\, \delta,\, \theta)$, where $(E,\, \delta)$ is a framed bundle and $\theta$ is a 
holomorphic section of $\text{End}(E)\otimes K_X\otimes {\mathcal O}_X(D)$ as before. Let 
	${\mathcal M}_H(r,d)$ be the moduli space of semistable framed Higgs bundles on $X$ of rank $r$ 
	and degree $d$. This is a smooth, irreducible quasi-projective variety.

	Also Simpson considered moduli spaces of framed Higgs bundles \cite{Si1,Si2}. The difference between these moduli spaces and the ones hereby considered is the twist of the  Higgs fields ($K_X$ versus $K_X(D)$). Nonetheless, the construction of the quot-scheme found in \cite[\S 4]{Si1} remains valid in our case. Combining his arguments (cf. \cite[Theorem 4.1]{Si1}) with Nitsure's \cite{Nitin} for arbitrary twistings, we may deduce that ${\mathcal M}_H(r,d)$  is an irreducible quasi-projective variety.

	 It should be mentioned that a frame is also known as a $D$-level structure as studied by Seshadri in 
\cite{Se}. Markman in \cite{Ma} studied a Poisson structure on the cotangent space to the 
moduli space of vector bundles with $D$-level structures constructed by Seshadri. The 
difference between the space considered by Markman and ours is that the stability condition we 
use is stronger.

We investigate the local structure of ${\mathcal M}_H(r,d)$. The tangent space to ${\mathcal M}_H(r,d)$
at a point $(E,\, \delta,\, \theta)\, \in\, {\mathcal M}_H(r,d)$ is given by the first hypercohomology of the
complex
$$
{\mathcal C}_{\bullet}\, :\,
{\mathcal C}_{0}\,=\, \text{End}(E)\otimes{\mathcal O}_X(-D) \, \stackrel{f_\theta}{\longrightarrow}\,
{\mathcal C}_{1}\,=\, \text{End}(E)\otimes K_X\otimes{\mathcal O}_X(D)\, ,
$$
where $f_\theta(s)\, =\, \theta\circ s - s\circ\theta$ (see Corollary \ref{cor tangent vs def space}).

It turns out that ${\mathcal M}_H(r,d)$ has a holomorphic symplectic structure,
and moreover the symplectic form is exact (Theorem \ref{thm1}). We prove that 
the forgetful map from the stable locus ${\mathcal M}^s_H(r,d)$ of ${\mathcal M}_H(r,d)$ to
${\mathcal N}_H(r,d)$,
$$
(E,\, \delta,\, \theta)\, \longmapsto\, (E,\, \theta)\, ,
$$
is compatible with the Poisson structures on ${\mathcal M}^s_H(r,d)$ and ${\mathcal N}_H(r,d)$ (see Theorem
\ref{thm poisson map}). This means that the pullback, by this forgetful map, of the Poisson bracket $\{f,\, g\}$ of two
locally defined holomorphic functions $f$ and $g$ on ${\mathcal N}_H(r,d)$ coincides with the Poisson bracket
of the pullbacks of $f$ and $g$.

Finally, we study the complete integrability properties of the Hitchin system $h$ on 
$\mathcal{M}_H(r,d)$ with respect to that of $\mathcal{N}_H(r,d)$ (denoted by 
$\widetilde{h}$) when $D$ is reduced. The generic fibers of ${h}$ are torsors over Jacobian 
varieties of spectral curves (cf. Proposition \ref{prop hitchin fibers}).The symplectic form becomes degenerate when restricted to the largest abelianizable subsystem. The fibers of the 
latter system are semiabelian varieties (Proposition \ref{prop integrable subsystem}), 
corresponding to a (local) completion, in the sense that it provides a set of Poisson commuting functions consisting of those in the Hitchin system of $\mathcal{M}_{H}(r,d)$ together with some functions transversal to $\widetilde{h}$
(Proposition \ref{prop completed int system}).

\section{Framed Higgs bundles and their deformations}

\subsection{Framed Higgs bundles and their moduli spaces}

Let $X$ be a compact connected Riemann surface. The genus
of $X$ will be denoted by $g_X$. Let $K_X$ denote the holomorphic cotangent bundle
of $X$. Fix a nonzero effective divisor
\begin{equation}\label{e1}
D\, =\, \sum_{i=1}^s n_ix_i\, ,
\end{equation}
where $\{x_i\}_{i=1}^s$ are distinct points of $X$ with $n_i\, >\, 0$ for
all $i$, and $s\, \geq\, 1$. 

We shall identify $D$ with the subscheme of $X$
with structure sheaf ${\mathcal O}_D\,=\, {\mathcal O}_X/{\mathcal O}_X(-D)$.
Similarly, for any coherent analytic sheaf $F$ on $X$, its
restriction to $D$ will be denoted by
$F_D$ and, from now on we will denote
the tensor products $F\otimes_{{\mathcal O}_X}{\mathcal O}_X(-D)$ and
$F\otimes_{{\mathcal O}_X}{\mathcal O}_X(D)$ by
$F(-D)$ and $F(D)$ respectively.

We shall identify a holomorphic vector bundle with the coherent analytic sheaf
given by its locally defined holomorphic sections. In particular, for a holomorphic vector bundle
$F$ on $X$, the above restriction $F_D$ will also mean the restriction of the
holomorphic vector bundle $F$ to the subscheme $D$. For any vector bundle $F$ on $X$, its slope
is defined to be
$$
\mu(F)\,:=\, \frac{\mathrm{deg}(F)}{\mathrm{rk}(F)}\,\in\, {\mathbb Q}\, .
$$

\begin{definition}\label{def1}
Let $E$ be a holomorphic vector bundle on $X$ of rank $r$. A \textit{frame} on $E$
is an isomorphism of ${\mathcal O}_D$ modules
$$
\delta\, :\, E_D \, \longrightarrow\,  {\mathcal O}^{\oplus r}_D\, .
$$
A vector bundle with a frame will be called a \textit{framed bundle}.
\end{definition}

\begin{definition}\label{def2}
A \textit{Higgs field} on a framed bundle $(E,\, \delta)$ is a holomorphic section of
the vector bundle ${\rm End}(E)\otimes K_X(D)$. A \textit{framed Higgs bundle}
is a triple $(E,\, \delta,\, \theta)$, where $(E,\, \delta)$ is a framed bundle and 
$\theta$ is a Higgs field on $(E,\, \delta)$.
\end{definition}

A framed bundle $(E,\, \delta)$ will be called \textit{stable} (respectively, 
\textit{semistable}) if the underlying vector bundle $E$ is \textit{stable} 
(respectively, \textit{semistable}). A framed Higgs bundle $(E,\, \delta,
\, \theta)$ will be called \textit{stable} (respectively,
\textit{semistable}) if for every subbundle $0\, \not=\, F\, \subsetneq\, E$
with $\theta(F)\, \subset\, F\otimes K_X(D)$, the
inequality
$$
\mu(F)\, <\,
\mu(E)\ \ \text{\Big(respectively, }\, 
\mu(F)\, \leq\,
\mu(E){\rm \Big)}
$$
holds.

Simpson showed that the semistable framed bundles (respectively, semistable framed 
Higgs bundles) of rank $r$ and degree $d$ have a fine moduli space denoted by ${\mathcal M}(r,d)$ 
(respectively, ${\mathcal M}_H(r,d)$), which is a smooth quasiprojective irreducible 
variety \cite{Si1}, \cite{Si2}.

We shall now see that all semistable framed Higgs bundles are simple. The frame thus provides a rigidification of the moduli problem.

\begin{lemma}\label{lem0}
Let $(E,\theta)$ and $(E',\theta')$ be semistable Higgs bundles on $X$ with
\begin{equation}
\label{s1}
\mu(E)\,=\, \mu(E')\, .
\end{equation}
Let $h\, :\, E\, \longrightarrow\, E'$ be a homomorphism such that
\begin{itemize}
\item $\theta' \circ h\,=\, (h\otimes\text{Id}_{K_X(D)})\circ\theta$ as homomorphisms from $E$ to
$E'\otimes K_X(D)$, and

\item there is a point $x_0\, \in\, X$ such that $h(x_0)\,=\, 0$.
\end{itemize}
Then $h$ vanishes identically.
\end{lemma}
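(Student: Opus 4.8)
The plan is to derive a contradiction from the assumption that $h\, \neq\, 0$, by using the vanishing of $h$ at the single point $x_0$ to manufacture a nonzero Higgs morphism whose target has \emph{strictly smaller} slope than its source --- something that cannot happen between semistable Higgs bundles of equal slope.

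First I would record the standard slope inequality for morphisms of semistable Higgs bundles, which is the main tool: if $(A,\, \phi)$ and $(B,\, \psi)$ are semistable $D$-twisted Higgs bundles on $X$ and $f\, :\, A\, \longrightarrow\, B$ is a nonzero homomorphism with $\psi\circ f\,=\,(f\otimes\text{Id})\circ\phi$, then $\mu(A)\, \leq\, \mu(B)$. To prove this, set $I\,=\,\text{Im}(f)\,\subseteq\, B$, a nonzero $\psi$-invariant coherent subsheaf. Since $I\,=\, A/\ker f$ with $\ker f$ a $\phi$-invariant subsheaf of $A$ (automatically saturated, being the kernel of a map to the torsion-free sheaf $I$), semistability of $A$ gives $\mu(\ker f)\, \leq\, \mu(A)$ and hence $\mu(I)\, \geq\, \mu(A)$. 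On the other side, the saturation $\widehat I$ of $I$ in $B$ is again $\psi$-invariant --- the induced Higgs field on $B/I$ preserves its torsion subsheaf because $K_X(D)$ is a line bundle --- so semistability of $B$ yields
$$
\mu(A)\, \leq\, \mu(I)\, \leq\, \mu(\widehat I)\, \leq\, \mu(B)\, .
$$

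Next I would use the hypothesis $h(x_0)\,=\,0$ to factor $h$ through a twist. Vanishing at $x_0$ means precisely that the image of $h$ lies in the subsheaf of sections of $E'$ vanishing at $x_0$, so $h$ factors as
$$
E\, \stackrel{h'}{\longrightarrow}\, E'\otimes{\mathcal O}_X(-x_0)\, \hookrightarrow\, E'\, ,
$$
with $h'\, \neq\, 0$ whenever $h\, \neq\, 0$. Equipping $E'\otimes{\mathcal O}_X(-x_0)$ with the Higgs field $\theta'\otimes\text{Id}$ (via the canonical identification $\text{End}(E'\otimes{\mathcal O}_X(-x_0))\,=\,\text{End}(E')$), this is again a semistable Higgs bundle, now of slope $\mu(E')-1$; and because the inclusion $E'\otimes{\mathcal O}_X(-x_0)\hookrightarrow E'$ intertwines the two Higgs fields, the relation satisfied by $h$ forces $h'$ to be a morphism of Higgs bundles as well.

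Applying the slope inequality to the nonzero Higgs morphism $h'\, :\, (E,\, \theta)\, \longrightarrow\, (E'\otimes{\mathcal O}_X(-x_0),\, \theta'\otimes\text{Id})$ then gives
$$
\mu(E)\, \leq\, \mu(E')-1\,=\,\mu(E)-1\, ,
$$
where the last equality is \eqref{s1}, and this is absurd. Hence $h\,=\,0$. I expect the only genuinely delicate point to be the verification that the kernel, image, and saturation constructions stay inside the category of Higgs sheaves --- that is, that they respect $\theta$- and $\theta'$-invariance --- since that is exactly what legitimizes invoking the semistability inequalities in the second paragraph; once that is in place, the factoring through ${\mathcal O}_X(-x_0)$ and the final slope arithmetic are immediate.
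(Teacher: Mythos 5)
Your proof is correct, and it exploits the hypothesis $h(x_0)\,=\,0$ by a mechanism genuinely different from the paper's. Both arguments run on the same semistability machinery --- image sheaf, saturation, their invariance under the Higgs fields, and the resulting slope inequalities --- but they diverge at the point where the vanishing at $x_0$ must produce a contradiction. The paper works entirely inside $E'$: from $\mu(E)\,=\,\mu(E')$ and the two semistability inequalities it deduces that the image $I$ of $h$ coincides with its saturation $I'$, and then observes that this is impossible locally, because every local section of $I$ near $x_0$ vanishes there (all columns of $h$ do), so a nonzero $I$ cannot be a subbundle at $x_0$; that is, $x_0$ lies in the support of $I'/I$. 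You instead isolate the slope inequality as a standalone lemma (a nonzero morphism of semistable Higgs bundles satisfies $\mu(\mathrm{source})\,\leq\,\mu(\mathrm{target})$) and convert the pointwise vanishing into a numerical deficit by factoring $h$ through $E'\otimes{\mathcal O}_X(-x_0)$, which is again semistable of slope $\mu(E')-1$; the contradiction $\mu(E)\,\leq\,\mu(E)-1$ is then pure arithmetic. Your version is more modular and generalizes for free: vanishing along any effective divisor $Z$ gives $\mu(E)\,\leq\,\mu(E')-\deg Z$, quantifying how much a Higgs morphism can degenerate, and the same template handles framings over nonreduced $D$ without change. The paper's version is more self-contained: it never introduces the twisted bundle or needs the (easy, but necessary) check that twisting by a line bundle preserves semistability, at the cost of the local argument about the support of $I'/I$. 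The invariance verifications you flag as the delicate points --- kernel, image and saturation being preserved by the Higgs fields, using that $K_X(D)$ is a line bundle so that torsion subsheaves are respected --- are exactly the ones the paper also relies on, and your sketches of them are sound.
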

\begin{proof}
Assume that $h$ is not identically zero.
Let $I$ denote the image of $h$; note that $I$ is torsion-free because it is a subsheaf of $E'$. Since
$(E,\, \theta)$ and $(E',\, \theta')$ are semistable, we have
$$
\mu(E')\, \geq\,
\mu(I)\, \geq\,
\mu(E)\, ,
$$
so from \eqref{s1} it follows that $\mu(I)\,=\,
\mu(E')$. Now, let $I'$ be the subbundle of $E'$ generated by $I$; this means that
$I'\, \subset\, E'$ is the inverse image, under the quotient map
$E'\, \longrightarrow\, E'/I$, of the torsion part of $E'/I$. Then
$$
\mu(I')\, \leq\, \mu(E')\,=\,
\mu(I)\, .
$$
On the  other hand, $\mu(I')\,\geq \, \mu(I)$, because $I$ is subsheaf of $I'$ with the quotient
$I'/I$ being a torsion sheaf. Since $\mu(I')\,=\, \mu(I)$, it follows
that $\text{deg}(I')\,=\, \text{deg}(I)$, because $\text{rank}(I')\,=\,
\text{rank}(I)$. Hence we have $I'\,=\, I$. But this is a contradiction, because
$h(x_0)\,=\, 0$ and $I\, \not=\, 0$ implying that
$x_0$ lies in the support of $I'/I$. Therefore, we conclude that $h\,=\,0$. 
\end{proof}

An isomorphism between framed Higgs bundles $(E,\, \delta, \, \theta)$
and $(E',\, \delta', \, \theta')$ is a holomorphic isomorphism
$h\,:\, E\, \longrightarrow\, E'$ of vector bundles such that
\begin{itemize}
\item $h\circ\delta\,=\, \delta'$, and

\item $\theta'\circ h\,=\, (h\otimes\text{Id}_{K_X(D)})\circ\theta$.
\end{itemize}

\begin{corollary}\label{cor0}
A semistable framed Higgs bundle $(E,\, \delta, \, \theta)$
does not admit any nontrivial automorphism.
\end{corollary}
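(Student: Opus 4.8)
The plan is to deduce this from Lemma \ref{lem0} by comparing the given automorphism with the identity. Let $h\colon E\to E$ be an automorphism of the framed Higgs bundle $(E,\,\delta,\,\theta)$, and set $\phi\,:=\,h-\mathrm{Id}_E\,\in\,\mathrm{End}(E)$; the goal is to show that $\phi\,=\,0$. To invoke Lemma \ref{lem0} with $(E',\,\theta')\,=\,(E,\,\theta)$ and $h$ replaced by $\phi$, I would verify its three ingredients: the slope equality \eqref{s1}, the commutation of $\phi$ with the Higgs fields, and the existence of a point at which $\phi$ vanishes.

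The slope equality is automatic since $E'\,=\,E$. For the commutation, the second compatibility condition for $h$ reads $\theta\circ h\,=\,(h\otimes\mathrm{Id}_{K_X(D)})\circ\theta$, while $\mathrm{Id}_E$ trivially satisfies $\theta\circ\mathrm{Id}_E\,=\,(\mathrm{Id}_E\otimes\mathrm{Id}_{K_X(D)})\circ\theta$; subtracting gives $\theta\circ\phi\,=\,(\phi\otimes\mathrm{Id}_{K_X(D)})\circ\theta$, which is exactly the first bulleted hypothesis of Lemma \ref{lem0}. To produce a vanishing point I would use the frame: the first compatibility condition, applied with $\delta'\,=\,\delta$, says that $h$ preserves the single framing $\delta$, and since $\delta\colon E_D\to\mathcal{O}_D^{\oplus r}$ is an isomorphism of $\mathcal{O}_D$--modules this forces the restriction $h_D\colon E_D\to E_D$ to equal $\mathrm{Id}_{E_D}$. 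Hence $\phi_D\,=\,0$, so $\phi$ factors through $E(-D)$; as $D\,=\,\sum_{i=1}^s n_i x_i$ is a nonzero effective divisor with $s\,\geq\,1$, its support is nonempty, and choosing any $x_0$ in the support yields $\phi(x_0)\,=\,0$.

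It remains only to check that $(E,\,\theta)$ is semistable as a Higgs bundle, which holds because the semistability condition imposed on the framed Higgs bundle $(E,\,\delta,\,\theta)$ involves exactly the $\theta$--invariant subbundles $0\,\neq\,F\,\subsetneq\,E$ and the slope inequality $\mu(F)\,\leq\,\mu(E)$, with the frame playing no role. Lemma \ref{lem0} then applies and gives $\phi\,\equiv\,0$, that is, $h\,=\,\mathrm{Id}_E$. This argument is essentially a direct application of Lemma \ref{lem0}, so no substantial obstacle arises; the only point demanding a little care is the frame bookkeeping, namely that compatibility with a \emph{fixed} framing $\delta$ (rather than merely with some framing) is what pins $h_D$ down to the identity and thereby supplies the vanishing point required by the lemma.
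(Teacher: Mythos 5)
Your proof is correct and is essentially the paper's own argument: the paper likewise considers $h-\mathrm{Id}_E$, observes that it vanishes on the nonzero divisor $D$ because $h$ is compatible with the fixed framing, and applies Lemma \ref{lem0}. You have merely spelled out the routine verifications (slope equality, commutation with $\theta$, and semistability of the underlying Higgs pair) that the paper leaves implicit.
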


\begin{proof}
For any automorphism $h$ of $(E,\, \delta, \, \theta)$, consider
$h-\text{Id}_E$. It vanishes on $D$.
Since the effective divisor $D$ is nonzero, from Lemma \ref{lem0} it follows that
$h-\text{Id}_E\,=\, 0$.
\end{proof}

\subsection{Infinitesimal deformations}

The following results are analogous to those of \cite{Ma}, \cite[Section 3]{Bo} and 
\cite[Section 2]{BR}. In our situation the moduli space is built using
a stability condition different from that in \cite{Ma}.

Let $X(\epsilon)\,:=\,X\times{\rm Spec}({\mathbb C}[\epsilon]/\epsilon^2)$.
An infinitesimal deformation of a framed bundle $(E,\, \delta)$ (respectively, framed Higgs bundle
$(E,\, \delta, \, \theta)$) is given by an isomorphism class of a pair
$(E_\epsilon,\, \delta_\epsilon)$
(respectively, triple $(E_\epsilon,\, \delta_\epsilon,\,\theta_\epsilon)$) such that 
\begin{itemize}
\item $E_\epsilon\,\longrightarrow\, X(\epsilon)$ is a holomorphic vector bundle,

\item $\delta_\epsilon \, :\, E_\epsilon\vert_{D\times{\rm Spec}({\mathbb C}[\epsilon]/\epsilon^2)}\,\,\longrightarrow \,{\mathcal O}^{\oplus r}_{D\times{\rm Spec}({\mathbb C}[\epsilon]/\epsilon^2)}
$ is an
isomorphism,

\item (in case of framed Higgs bundles)
$\theta_\epsilon\, \in\, H^0(X(\epsilon),\,\text{End}(E_\epsilon)\otimes
q^*_0 K_X(D))$, where $q_0$ is the natural projection of $X(\epsilon)$ to $X$,

\item $(E_\epsilon,\, \delta_\epsilon)|_{X\times\{0\}}\,\cong\, (E,\, \delta)$ (respectively,
$(E_\epsilon,\, \delta_\epsilon,\,\theta_\epsilon)|_{X\times\{0\}}\,\cong\, (E,\, \delta,\,\theta)$),
where $0\,\in\, {\rm Spec}({\mathbb C}[\epsilon]/\epsilon^2)$ denotes the closed point.
\end{itemize}

\begin{lemma}\label{lem1} 
The space of all infinitesimal deformations of a framed bundle 
$(E,\, \delta)$ is identified with $H^1(X,\, {\rm End}(E)(-D))$.
\end{lemma}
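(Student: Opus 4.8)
The plan is to describe the infinitesimal deformations of $(E,\, \delta)$ by \v{C}ech cocycles and then identify the resulting space with $H^1(X,\, \text{End}(E)(-D))$ by means of the short exact sequence of sheaves
$$
0\,\longrightarrow\,\text{End}(E)(-D)\,\longrightarrow\,\text{End}(E)\,\stackrel{\rho}{\longrightarrow}\,\text{End}(E)_D\,\longrightarrow\,0\, ,
$$
where $\rho$ is the restriction to the subscheme $D$. Since $D$ is $0$-dimensional, $H^1(X,\, \text{End}(E)_D)\,=\,0$, so the associated long exact sequence reads
$$
H^0(\text{End}(E))\,\longrightarrow\,H^0(\text{End}(E)_D)\,\stackrel{\partial}{\longrightarrow}\,H^1(\text{End}(E)(-D))\,\longrightarrow\,H^1(\text{End}(E))\,\longrightarrow\,0\, ,
$$
and the connecting map $\partial$ will be the bridge between the framing data and the target space.

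First I would fix an acyclic (Stein) open cover $\{U_\alpha\}$ of $X$ trivializing $E$ and chosen so that $\text{Supp}(D)\,\subset\, U_0$ and $U_\alpha\cap\text{Supp}(D)\,=\,\varnothing$ for $\alpha\,\neq\,0$; in particular every double overlap $U_{\alpha\beta}$ avoids $D$, so on this cover a $1$-cochain for $\text{End}(E)$ is literally the same as one for $\text{End}(E)(-D)$, and the two sheaves differ only in $0$-cochains over $U_0$. In these terms an infinitesimal deformation $(E_\epsilon,\, \delta_\epsilon)$ is recorded by a pair $(\{a_{\alpha\beta}\},\, \psi)$, where $\{a_{\alpha\beta}\}\,\in\, Z^1(\{U_\alpha\},\, \text{End}(E))$ gives the deformed transition functions $g_{\alpha\beta}(\text{Id}+\epsilon a_{\alpha\beta})$ of $E_\epsilon$, and $\psi\,\in\, H^0(\text{End}(E)_D)$ records the difference $\delta_\epsilon\,=\,\delta\circ(\text{Id}+\epsilon\psi)$ of the deformed framing from $\delta$ in the $U_0$-trivialization. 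Two such pairs define isomorphic deformations precisely when they differ by a gauge $\{b_\alpha\}\,\in\, C^0(\{U_\alpha\},\, \text{End}(E))$ acting by $a_{\alpha\beta}\,\mapsto\, a_{\alpha\beta}+(b_\beta-b_\alpha)$ and $\psi\,\mapsto\,\psi-\rho(b_0)$, the second rule expressing that a frame-changing automorphism of $E$ must be compensated in $\delta_\epsilon$.

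The heart of the argument is the assignment
$$
(\{a_{\alpha\beta}\},\, \psi)\,\longmapsto\,[\{a_{\alpha\beta}\}]+\partial(\psi)\ \in\ H^1(\text{End}(E)(-D))\, ,
$$
where $[\{a_{\alpha\beta}\}]$ is the class of the (automatically $\text{End}(E)(-D)$-valued) cocycle and $\partial$ is the connecting map above. I would check that this is well defined on gauge-equivalence classes: the gauge move changes $[\{a_{\alpha\beta}\}]$ by $[\{b_\beta-b_\alpha\}]$ and $\partial(\psi)$ by $-\partial(\rho(b_0))$, and these cancel because $[\{b_\beta-b_\alpha\}]\,=\,\partial(\rho(b_0))$ in $H^1(\text{End}(E)(-D))$. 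This last identity is exactly the definition of the connecting homomorphism, once one notes that $\{b_\alpha\}$ with its $U_0$-component removed already lies in $C^0(\text{End}(E)(-D))$, so its coboundary is trivial in $H^1(\text{End}(E)(-D))$. Surjectivity is then immediate by taking $\psi\,=\,0$ and letting $\{a_{\alpha\beta}\}$ run over cocycle representatives; injectivity follows by first gauging $\psi$ to $0$ and then using $[\{a_{\alpha\beta}\}]\,=\,0$ to write the remaining cocycle as a coboundary in $\text{End}(E)(-D)$. Linearity in the data upgrades the resulting bijection to an isomorphism of vector spaces.

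The main obstacle is the bookkeeping of the framing: one must verify that quotienting by \emph{all} gauges $\{b_\alpha\}$—rather than only the frame-preserving ones—collapses the extra $H^0(\text{End}(E)_D)$ of frame perturbations exactly onto the image of $\partial$, so that the answer is $H^1(\text{End}(E)(-D))$ and not $H^1(\text{End}(E))$. Equivalently, and perhaps more conceptually, one can phrase the whole computation as the statement that the deformation space is the first hypercohomology of the two-term complex $[\text{End}(E)\stackrel{\rho}{\to}\text{End}(E)_D]$, which is quasi-isomorphic to $\text{End}(E)(-D)$ placed in degree $0$ by the sheaf sequence above; I would use the \v{C}ech version primarily to keep the identification with genuine framed deformations transparent.
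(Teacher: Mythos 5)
Your proof is correct, but it is organized along a genuinely different (and more complete) route than the paper's. The paper's proof of Lemma \ref{lem1} also works with \v{C}ech cocycles, but it normalizes the framing away: a $1$-cocycle $\{s_{j,k}\}$ for ${\rm End}(E)$ deforms $E$ via transition functions ${\rm Id}+\epsilon s_{j,k}$, the deformation carries the pullback frame precisely when $s_{j,k}$ is ${\rm End}(E)(-D)$-valued, and coboundaries of ${\rm End}(E)(-D)$ give trivial framed deformations; the remaining bookkeeping is declared straightforward. You instead keep the perturbation of the frame as explicit data $\psi\,\in\, H^0({\rm End}(E)_D)$, allow arbitrary gauges $\{b_\alpha\}$ rather than only frame-preserving ones, and collapse the pair $(\{a_{\alpha\beta}\},\,\psi)$ to a single class via the connecting homomorphism of $0\,\to\,{\rm End}(E)(-D)\,\to\,{\rm End}(E)\,\to\,{\rm End}(E)_D\,\to\,0$ --- equivalently, via the quasi-isomorphism of the two-term complex $[{\rm End}(E)\,\to\,{\rm End}(E)_D]$ with ${\rm End}(E)(-D)$. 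What your version buys is exactly the two points the paper's ``straight-forward'' hides: that every framed deformation is gauge-equivalent to one whose frame is the constant (pullback) frame, and that two frame-preserving deformations which are isomorphic as \emph{framed} deformations differ by an ${\rm End}(E)(-D)$-coboundary rather than merely an ${\rm End}(E)$-coboundary; it is also the $\theta\,=\,0$ instance of the hypercohomology formalism the paper invokes later (Lemma \ref{lem2} and Corollary \ref{cor tangent vs def space}), so it fits the rest of the paper well. What the paper's version buys is brevity and independence of any special cover; your argument relies on a Leray cover in which ${\rm Supp}(D)$ lies in a single open $U_0$ and all other opens avoid $D$, so you should record that such a cover exists (for instance $U_0$ a disjoint union of coordinate disks around the points of $D$ together with $U_1\,=\,X\setminus {\rm Supp}(D)$, both Stein and trivializing $E$) --- a minor but necessary remark, and the only gap I see in an otherwise complete argument.
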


\begin{proof}
Recall that the space of all infinitesimal deformations of $E$	is $H^1(X,\, \text{End}(E))$. Now, deformations of $(E,\delta)$
are deformations of $E$ together with the framing.
Take a covering of $X$ by affine open subsets $\{U_j\}_{j=1}^d$. 
A $1$-cocycle $\{s_{j,k}\}_{j,k=1}^d$ of ${\rm End}(E)$ gives an infinitesimal
deformation of $E$ via the vector bundle $E_\epsilon$ on $X(\epsilon)$ defined by
the transition functions 
${\rm Id}+\epsilon\cdot s_{j,k}$ for the local trivializations $(q^*_0E)\vert_{U_j\times{\rm Spec}({\mathbb C}[\epsilon]/\epsilon^2)}$
over $U_j\times{\rm Spec}({\mathbb C}[\epsilon]/\epsilon^2)$. Clearly, these
preserve a frame if and only if $s_{j,k}$ are
sections of $\rm{End}(E)(-D)$. Next note that a
$1$-coboundary for ${\rm End}(E)(-D)$ gives a trivial infinitesimal 
deformation of the framed bundle $(E,\, \delta)$, as it is
a trivial deformation of $E$ which preserves the framing. Now the lemma
is straight-forward.
\end{proof}

\begin{remark}
Lemma \ref{lem1} is analogous to \cite[Proposition 6.1]{Ma}, the difference being 
that the latter considers a different moduli space, the moduli space of D-level structures, where the stability condition, known as $\overline{\delta}$-stability, is as follows.

Let $D$ an effective divisor and $\overline{\delta}=\deg(D)$, a framed bundle $(E,\delta)$ with framing over $D$ is $\overline{\delta}$-stable if and only if for all subbundle $F\subset E$ it satisfies
the inequality
$$
\mu(F)<\mu(E)+\overline{\delta}\left(\frac{1}{\mathrm{rk}(F)}-\frac{1}{\mathrm{rk}(E)}\right).
$$
\end{remark}

Given a framed Higgs bundle $(E,\, \delta, \, \theta)$ on $X$, consider the following 
complex:
\begin{equation}\label{e2}
\begin{array}{cccc}
{\mathcal C}_{\bullet}\, :\,&
{\mathcal C}_{0}\,=\, \text{End}(E)(-D) \,& \stackrel{f_\theta}{\longrightarrow}\,&
{\mathcal C}_{1}\,=\, \text{End}(E)\otimes K_X(D)\, \\
&s&\longmapsto&\theta\circ s-s\circ\theta.
\end{array}
\end{equation}

It may be noted that $f_\theta (\text{End}(E)(-D))\, \subset\, \rm{End}(E)\otimes K_X
\, \subset\, \text{End}(E)\otimes K_X(D)$.

\begin{lemma}\label{lem2}
The infinitesimal deformations of a framed Higgs bundle $(E,\, \delta, \, \theta)$
are parametrized by the first hypercohomology ${\mathbb H}^1({\mathcal C}_{\bullet})$
of the complex ${\mathcal C}_{\bullet}$ in \eqref{e2}.
\end{lemma}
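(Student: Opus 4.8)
The plan is to compute $\mathbb{H}^1(\mathcal{C}_\bullet)$ by Čech cochains for the affine cover $\{U_j\}_{j=1}^d$ of Lemma \ref{lem1}, and then to match the resulting cocycle/coboundary description term by term with the data of an infinitesimal deformation of $(E,\, \delta,\, \theta)$. Recall that for the two-term complex $\mathcal{C}_\bullet$ of \eqref{e2} the group $\mathbb{H}^1(\mathcal{C}_\bullet)$ is computed from the total complex of the associated Čech double complex: a class is represented by a pair $(\{s_{jk}\},\, \{\phi_j\})$, where $\{s_{jk}\}$ is a $1$-cochain of $\mathcal{C}_0\,=\,\text{End}(E)(-D)$ and $\{\phi_j\}$ is a $0$-cochain of $\mathcal{C}_1\,=\,\text{End}(E)\otimes K_X(D)$, subject to the conditions that $\{s_{jk}\}$ be a Čech $1$-cocycle and that $\phi_j-\phi_k\,=\,f_\theta(s_{jk})$ on $U_j\cap U_k$; two such pairs are identified when they differ by the image of a $0$-cochain $\{t_j\}$ of $\mathcal{C}_0$, namely $s_{jk}\,\mapsto\, s_{jk}+(t_k-t_j)$ and $\phi_j\,\mapsto\,\phi_j+f_\theta(t_j)$.

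Next I would extract the cocycle from a deformation. Given $(E_\epsilon,\, \delta_\epsilon,\, \theta_\epsilon)$, Lemma \ref{lem1} already produces, in the local trivializations of $q^*_0E$ over the $U_j\times{\rm Spec}({\mathbb C}[\epsilon]/\epsilon^2)$, transition functions ${\rm Id}+\epsilon\cdot s_{jk}$ for $E_\epsilon$ with $\{s_{jk}\}$ a Čech $1$-cocycle of $\text{End}(E)(-D)$, the twist by $-D$ encoding that the frame $\delta_\epsilon$ is preserved. In the same trivializations write $\theta_\epsilon\vert_{U_j}\,=\,\theta+\epsilon\phi_j$, with $\phi_j$ a local section of $\text{End}(E)\otimes K_X(D)$. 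The requirement that these local expressions glue to a global section of $\text{End}(E_\epsilon)\otimes q^*_0K_X(D)$ is the identity $\theta_\epsilon\vert_{U_j}\,=\,({\rm Id}+\epsilon s_{jk})(\theta_\epsilon\vert_{U_k})({\rm Id}+\epsilon s_{jk})^{-1}$; expanding to first order in $\epsilon$ and using that the zeroth order already holds for $\theta$ yields precisely $\phi_j-\phi_k\,=\,f_\theta(s_{jk})$, so that $(\{s_{jk}\},\,\{\phi_j\})$ is a hypercohomology $1$-cocycle. Here I would also invoke the observation preceding the lemma that $f_\theta(s_{jk})\,=\,\theta\circ s_{jk}-s_{jk}\circ\theta$ lands in $\text{End}(E)\otimes K_X$, so the equation is consistent with $\phi_j,\, \phi_k$ being sections of $\text{End}(E)\otimes K_X(D)$.

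Conversely, I would reconstruct a deformation from a cocycle: the cocycle condition on $\{s_{jk}\}$ builds $E_\epsilon$ with its frame exactly as in Lemma \ref{lem1}, and the relation $\phi_j-\phi_k\,=\,f_\theta(s_{jk})$ is exactly what is needed for the local fields $\theta+\epsilon\phi_j$ to patch into a global Higgs field $\theta_\epsilon$ on $E_\epsilon$. Finally I would match the equivalence relations: an isomorphism between two such deformations restricting to the identity at $\epsilon\,=\,0$ is given locally by ${\rm Id}+\epsilon t_j$, and the two defining conditions of an isomorphism of framed Higgs bundles ($h\circ\delta\,=\,\delta'$ and $\theta'\circ h\,=\,(h\otimes\text{Id}_{K_X(D)})\circ\theta$) translate, to first order, into the statement that $\{t_j\}$ preserves the frame, forcing $t_j\,\in\,\text{End}(E)(-D)$, and that it changes $\theta_\epsilon$ by $f_\theta(t_j)$. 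These are precisely the hypercohomology coboundary relations recorded above, so the assignment $(E_\epsilon,\,\delta_\epsilon,\,\theta_\epsilon)\,\mapsto\,(\{s_{jk}\},\,\{\phi_j\})$ descends to a bijection onto $\mathbb{H}^1(\mathcal{C}_\bullet)$.

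I expect the only genuinely delicate point to be this last one: verifying that frame-preservation cuts the infinitesimal gauge freedom down from $\text{End}(E)$ to $\text{End}(E)(-D)$, so that the coboundaries are those of $\mathcal{C}_0\,=\,\text{End}(E)(-D)$ rather than of the untwisted bundle. This is the feature distinguishing the framed problem from the unframed one and is the reason the complex begins with the $(-D)$-twist; it is inherited from Lemma \ref{lem1}, but one must check it survives in the presence of the Higgs field, i.e. that no compatibility between $\theta$ and $\delta$ is silently imposed. Tracking the signs in the first-order expansions, which fix the sign convention in $\phi_j-\phi_k\,=\,f_\theta(s_{jk})$, is then routine.
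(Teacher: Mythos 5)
Your proposal is correct and takes essentially the same approach as the paper: the paper's proof likewise represents a deformation by a Čech pair $(\eta_{jk},\,\gamma_j)$ on an affine cover, with the $\mathrm{End}(E)(-D)$-twist of the cocycle inherited from Lemma \ref{lem1} and the Higgs field written locally as $\theta+\gamma_j\epsilon$, and identifies the gluing compatibility with the hypercohomology cocycle condition, citing \cite{BR} and \cite{Bi} for the details. You have simply written out explicitly the first-order expansions and the coboundary matching that the paper delegates to those references.
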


\begin{proof}
Its proof is very similar to the proof of \cite[Theorem 2.3]{BR} (also given in
\cite{Bo}, \cite{Ma}).

Proceeding as in the proof of \cite[Theorem 2.3]{BR}, we check that if $\{U_j\}_{j=1}^d$ is an affine open covering of $X$ such that
\begin{itemize}
\item each point of $\{x_i\}_{i=1}^s$ lies in exactly one of these open subsets
$\{U_j\}_{j=1}^d$, and

\item $E\vert_{U_j}\,\cong\, {\mathcal O}^{\oplus r}_{U_j}$ for every $j$,
\end{itemize}
then
$\{U_j(\epsilon)\,:=\, U_j\times {\rm Spec}({\mathbb C}[\epsilon]/\epsilon^2)\}_{j=1}^d$
is an affine open covering of $X(\epsilon)$ satisfying similar conditions. From Lemma
\ref{lem1} we know that the isomorphism class of $E_\epsilon$ is
determined by the element of $H^1(X,\, {\rm End}(E)(-D))$ defined by
a $1$-cocycle $\eta_{jk}$. The Higgs field $\theta_\epsilon$ is then given by
$\theta+\gamma_j\epsilon$ on $U_j(\epsilon)$, where $$\gamma_j\,\in\, H^0(U_j,\,
\text{End}(E)\otimes K_X(D)\vert_{U_j})\, .$$ The compatibility
condition for $\eta_{jk}$ and $\gamma_j$ coincides with the condition that $(\eta_{jk},\, \gamma_j)$ defines
an element of ${\mathbb H}^1({\mathcal C}_{\bullet})$; see
\cite[Theorem 2.3]{BR}, \cite[Theorem 2.5]{Bi} for details.
\end{proof}

The dual of the complex ${\mathcal C}_{\bullet}$ in \eqref{e2} is the complex of coherent sheaves on $X$
\begin{equation}\label{dc}
\mathcal{C}^{\bullet}\, :\, \mathcal{C}^{0}\,:=\,(\text{End}(E)\otimes K_X(D))^*\otimes K_X \,
\stackrel{(f_\theta)^*\otimes\text{Id}_{K_X}}{\longrightarrow}\,
\mathcal{C}^1\,:=\, \text{End}(E)(-D)^*\otimes K_X
\end{equation}

To clarify, $\mathcal{C}^{0}$ and $\mathcal{C}^1$ are at the $0$-th and first position respectively.

Since $\text{End}(E)\,=\, \text{End}(E)^*$, we have the following:

\begin{lemma}\label{lm self dual complex}
There is an isomorphism of complexes
\begin{equation}\label{complexes}
\mathcal{C}_\bullet \,\cong\, \mathcal{C}^\bullet
\end{equation}
given by the commutative diagram
$$
\begin{matrix}
{\rm End}(E)(-D) & \stackrel{f_\theta}{\longrightarrow}& {\rm End}(E)\otimes K_X(D)\\
\Vert && ~\Vert \\
({\rm End}(E)\otimes K_X(D))^*\otimes K_X & \stackrel{(f_\theta)^*\otimes
{\rm Id}_{K_X}}{\longrightarrow}& {\rm End}(E)(-D)^*\otimes K_X.
\end{matrix}
$$
\end{lemma}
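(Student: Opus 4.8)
The plan is to obtain both vertical identifications from a single perfect pairing built out of the trace form on $\text{End}(E)$, and then to reduce the assertion that \eqref{complexes} is a map of complexes to the cyclic invariance of the trace. Since $E$ is a vector bundle, the pairing $\text{End}(E)\otimes\text{End}(E)\to\mathcal{O}_X$, $(A,B)\mapsto\operatorname{tr}(AB)$, is symmetric and nondegenerate; this is precisely the identification $\text{End}(E)\cong\text{End}(E)^*$ already invoked in the statement.

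First I would write down the two vertical arrows by line-bundle bookkeeping alone. From $K_X(D)^*\otimes K_X\cong\mathcal{O}_X(-D)$ one gets $(\text{End}(E)\otimes K_X(D))^*\otimes K_X\cong\text{End}(E)^*(-D)\cong\text{End}(E)(-D)$, which is the left vertical arrow $\mathcal{C}_0\cong\mathcal{C}^0$; dually, $(\text{End}(E)(-D))^*\otimes K_X\cong\text{End}(E)^*\otimes K_X(D)\cong\text{End}(E)\otimes K_X(D)$, which is the right vertical arrow $\mathcal{C}_1\cong\mathcal{C}^1$. It is cleaner to package both as a single perfect pairing $\mathcal{C}_0\otimes\mathcal{C}_1\to K_X$, $(s,\psi)\mapsto\operatorname{tr}(s\psi)$: the composite endomorphism part is traced out and the line-bundle parts multiply to $\mathcal{O}_X(-D)\otimes K_X(D)=K_X$, so the pairing is well defined and perfect, inducing simultaneously $\mathcal{C}_0\cong\mathcal{C}_1^*\otimes K_X=\mathcal{C}^0$ and $\mathcal{C}_1\cong\mathcal{C}_0^*\otimes K_X=\mathcal{C}^1$.

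The substance of the lemma is the commutativity of the square, which I would check by pairing against a test section. For local sections $s,u$ of $\mathcal{C}_0=\text{End}(E)(-D)$, chasing $s$ down-then-right evaluates to $\operatorname{tr}\!\big(s\cdot f_\theta(u)\big)$, while chasing it right-then-down evaluates to $\operatorname{tr}\!\big(u\cdot f_\theta(s)\big)$, with $f_\theta=[\theta,-]$ as in \eqref{e2}. Expanding the commutators and applying cyclicity of the trace shows that these two expressions coincide up to an overall sign; concretely, $\operatorname{tr}(s[\theta,u])=-\operatorname{tr}(u[\theta,s])$, which is just the statement that $\operatorname{ad}_\theta$ is skew-adjoint for the trace form.

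The step I expect to demand the most attention is exactly this sign. Because the differential is a commutator, the naive trace identifications make the square anticommute rather than commute, so \eqref{complexes} is realized as an isomorphism of complexes only after inserting a sign into one of the two vertical arrows (equivalently, by normalizing the pairing as $+\operatorname{tr}(s\psi)$ on one column and $-\operatorname{tr}(s\psi)$ on the other). Once this relative sign is fixed the square commutes and \eqref{complexes} follows. The remaining points—perfectness of the pairing and compatibility of the arrows with the $\pm D$ and $K_X$ twists—are the routine line-bundle computations recorded above, and the fact that $f_\theta$ lands in $\text{End}(E)\otimes K_X\subset\mathcal{C}_1$ (noted after \eqref{e2}) guarantees all the traces above are genuine sections of $K_X$.
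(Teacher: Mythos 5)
Your proof is correct and follows essentially the same route as the paper, whose entire argument is the remark that $\text{End}(E)\,\cong\,\text{End}(E)^*$ via the trace pairing together with the line-bundle identifications $(K_X(D))^*\otimes K_X\,\cong\,\mathcal{O}_X(-D)$, stated without further verification. Your sign observation is moreover accurate and is a point the paper silently glosses over: since $f_\theta\,=\,[\theta,\cdot\,]$ is skew-adjoint for the trace form, i.e.\ $\operatorname{tr}(f_\theta(s)u)\,=\,-\operatorname{tr}(s f_\theta(u))$, the naive identifications make the square anticommute, so one vertical arrow must be negated to obtain a genuine isomorphism of complexes --- a harmless normalization, as it affects neither the induced isomorphism $B_\theta$ on hypercohomology nor the nondegeneracy and antisymmetry of $\Psi_\theta$.
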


\begin{corollary}\label{cor tangent vs def space}
Let $(E,\,\delta,\,\theta)\,\in\,\mathcal{M}_H(r,d)$. Deformations
of $(E,\,\delta,\,\theta)$ are unobstructed. The tangent space of
$\mathcal{M}_H(r,d)$ at
$(E,\,\delta,\,\theta)$ is identified with $\mathbb{H}^1(\mathcal{C}_\bullet)$.
\end{corollary}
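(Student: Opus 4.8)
The plan is to read off the two assertions of the corollary from the deformation theory already set up. The tangent space identification is essentially immediate: the Zariski tangent space of $\mathcal{M}_H(r,d)$ at $(E,\delta,\theta)$ is by definition the space of first-order deformations (deformations over $\mathrm{Spec}({\mathbb C}[\epsilon]/\epsilon^2)$), and Lemma \ref{lem2} identifies that space with $\mathbb{H}^1(\mathcal{C}_\bullet)$. So the only real content is the unobstructedness claim, which will also re-prove smoothness of the moduli space. For this I would use the standard principle that the deformation theory of $(E,\delta,\theta)$ is governed by the two-term complex $\mathcal{C}_\bullet$, so that infinitesimal automorphisms, first-order deformations, and obstructions are computed respectively by $\mathbb{H}^0$, $\mathbb{H}^1$, and $\mathbb{H}^2$ of $\mathcal{C}_\bullet$. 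It therefore suffices to show that the obstruction space $\mathbb{H}^2(\mathcal{C}_\bullet)$ vanishes.

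To prove $\mathbb{H}^2(\mathcal{C}_\bullet)=0$ I would invoke Serre duality for the hypercohomology of a two-term complex of locally free sheaves on the curve $X$, together with the self-duality of Lemma \ref{lm self dual complex}. Writing $\mathcal{C}^\bullet$ for the dual complex \eqref{dc} (placed in degrees $0$ and $1$), Serre duality takes the form
$$
\mathbb{H}^i(\mathcal{C}_\bullet)^*\,\cong\,\mathbb{H}^{2-i}(\mathcal{C}^\bullet)\, .
$$
Taking $i=2$ and then applying the isomorphism of complexes \eqref{complexes} gives
$$
\mathbb{H}^2(\mathcal{C}_\bullet)^*\,\cong\,\mathbb{H}^0(\mathcal{C}^\bullet)\,\cong\,\mathbb{H}^0(\mathcal{C}_\bullet)\, .
$$
Hence it is enough to establish $\mathbb{H}^0(\mathcal{C}_\bullet)=0$.

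Finally I would compute $\mathbb{H}^0(\mathcal{C}_\bullet)$ directly. It is the kernel of the map $H^0(X,\,\text{End}(E)(-D))\,\longrightarrow\, H^0(X,\,\text{End}(E)\otimes K_X(D))$ induced by $f_\theta$; that is, the space of sections $s$ of $\text{End}(E)(-D)$ with $f_\theta(s)=\theta\circ s-s\circ\theta=0$. Such an $s$ is an endomorphism of $E$ commuting with $\theta$ that vanishes on $D$, since it is a section of the subsheaf $\text{End}(E)(-D)\subset\text{End}(E)$. Applying Lemma \ref{lem0} with $(E',\theta')=(E,\theta)$, $h=s$, and $x_0$ any point of the support of $D$ forces $s=0$; this is simply the infinitesimal form of Corollary \ref{cor0}. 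Therefore $\mathbb{H}^0(\mathcal{C}_\bullet)=0$, so $\mathbb{H}^2(\mathcal{C}_\bullet)=0$, the deformations are unobstructed, and the tangent space at $(E,\delta,\theta)$ is $\mathbb{H}^1(\mathcal{C}_\bullet)$.

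The step I expect to be the main obstacle is setting up the Serre duality isomorphism for the hypercohomology of the two-term complex with the correct indexing, so that the object appearing on the right-hand side is precisely the dual complex $\mathcal{C}^\bullet$ of \eqref{dc} in degrees $0$ and $1$, and checking that the self-duality \eqref{complexes} intertwines the duality pairing in the matching degrees. Once the degrees are matched correctly, the vanishing $\mathbb{H}^2(\mathcal{C}_\bullet)=0$ and hence the whole statement follow formally from Lemmas \ref{lem2}, \ref{lm self dual complex} and \ref{lem0}.
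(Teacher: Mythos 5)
Your proof is correct and follows essentially the same route as the paper: Lemma \ref{lem2} gives the identification of first-order deformations with $\mathbb{H}^1(\mathcal{C}_\bullet)$, Serre duality combined with the self-duality of Lemma \ref{lm self dual complex} reduces the vanishing of the obstruction space $\mathbb{H}^2(\mathcal{C}_\bullet)$ to that of $\mathbb{H}^0(\mathcal{C}_\bullet)$, and Lemma \ref{lem0} kills $\mathbb{H}^0(\mathcal{C}_\bullet)$. The only difference is cosmetic: you spell out explicitly how a $\theta$-commuting section of $\mathrm{End}(E)(-D)$ satisfies the hypotheses of Lemma \ref{lem0}, a step the paper treats as immediate.
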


\begin{proof}
In view of Lemma \ref{lem2}, it suffices to show that there is no obstruction to the infinitesimal deformations of a framed Higgs bundle.
From Lemma \ref{lem0} it follows immediately that
\begin{equation}\label{hcv}
{\mathbb H}^0({\mathcal C}_{\bullet})\,=\, 0\, ,
\end{equation}
where ${\mathcal C}_{\bullet}$ is the complex in \eqref{e2}. Serre duality
says that ${\mathbb H}^2({\mathcal C}_{\bullet})\,=\, {\mathbb
H}^0({\mathcal C}^{\bullet})^*$. Hence using
Lemma \ref{lm self dual complex} and \eqref{hcv} it follows that
\begin{equation}\label{eq no hypercohomologies}
{\mathbb H}^2({\mathcal C}_{\bullet})\,=\, 0\, .
\end{equation}
Hence there is no obstruction to deformations.
\end{proof}

\begin{lemma}\label{cor cotangent zar dense}
Let $\mathcal{M}(r,d)$ be the moduli space of stable framed bundles of rank $r$ and degree $d$. There is a tautological embedding 
\begin{equation}\label{eq iota}
\iota\, :\, T^*{\mathcal M}(r,d) \, \hookrightarrow\, {\mathcal M}_H(r,d)
\end{equation}
whose image is a Zariski open dense subset.
\end{lemma}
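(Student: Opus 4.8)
The plan is to identify $T^{*}\mathcal{M}(r,d)$ with the locus in $\mathcal{M}_H(r,d)$ on which the underlying vector bundle is stable, and then to check that this locus is open and dense. First I would compute the cotangent fibers. By Lemma \ref{lem1} the tangent space of $\mathcal{M}(r,d)$ at a stable framed bundle $(E,\,\delta)$ is $H^1(X,\,\mathrm{End}(E)(-D))$, so by Serre duality and the identification $\mathrm{End}(E)\,\cong\,\mathrm{End}(E)^{*}$,
$$
T^{*}_{(E,\delta)}\mathcal{M}(r,d)\,\cong\, H^1(X,\,\mathrm{End}(E)(-D))^{*}\,\cong\, H^0(X,\,\mathrm{End}(E)\otimes K_X(D)),
$$
which is exactly the space of Higgs fields on $(E,\,\delta)$ (Definition \ref{def2}). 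This dictates the tautological assignment $\iota\,:\,((E,\,\delta),\,\theta)\,\longmapsto\,(E,\,\delta,\,\theta)$. To upgrade this to a morphism of varieties I would use that $\mathcal{M}(r,d)$ is a fine moduli space: pulling back the universal framed bundle to $X\times T^{*}\mathcal{M}(r,d)$ and applying the relative form of the identification above, the Liouville tautological section becomes a relative Higgs field, hence a family of framed Higgs bundles parametrised by $T^{*}\mathcal{M}(r,d)$, and the universal property of $\mathcal{M}_H(r,d)$ produces $\iota$. The assignment is well defined since stability of $E$ forces \emph{every} subbundle, in particular every $\theta$-invariant one, to have strictly smaller slope, so $(E,\,\delta,\,\theta)$ is automatically a stable framed Higgs bundle.

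Next I would verify that $\iota$ is an injective immersion. Injectivity is immediate: an isomorphism $(E,\,\delta,\,\theta)\,\cong\,(E',\,\delta',\,\theta')$ restricts to an isomorphism of the framed bundles intertwining the cotangent vectors, so the two source points agree (the rigidity of Lemma \ref{lem0} and Corollary \ref{cor0} removes any ambiguity). For the differential, recall from Corollary \ref{cor tangent vs def space} that $T_{(E,\delta,\theta)}\mathcal{M}_H(r,d)\,=\,\mathbb{H}^1(\mathcal{C}_{\bullet})$. Here the decisive input is that stability of $E$ yields the two vanishings $H^0(\mathrm{End}(E)(-D))\,=\,0$ and, by Serre duality, $H^1(\mathrm{End}(E)\otimes K_X(D))\,=\,0$. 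Feeding these into the hypercohomology long exact sequence of the complex \eqref{e2} collapses it to
$$
0\,\longrightarrow\, H^0(\mathrm{End}(E)\otimes K_X(D))\,\longrightarrow\,\mathbb{H}^1(\mathcal{C}_{\bullet})\,\longrightarrow\, H^1(\mathrm{End}(E)(-D))\,\longrightarrow\, 0,
$$
which is precisely the extension describing the tangent space of the cotangent bundle (fibre direction $H^0$, base direction $H^1$). Since $d\iota$ is compatible with both sequences it is an isomorphism at every point; being injective and \'etale, $\iota$ is therefore an open immersion onto its image.

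It remains to identify the image and prove density. Since $\iota$ is an open immersion its image is Zariski open, and by construction it consists of exactly those framed Higgs bundles whose underlying bundle is stable. As $\mathcal{M}_H(r,d)$ is irreducible and the image is a non-empty open subset, it is automatically Zariski dense, finishing the proof. I expect the only real obstacle to be the differential computation: everything hinges on the two cohomology vanishings above, which hold \emph{because} the underlying bundle is stable and are exactly what force the hypercohomology sequence to reproduce the cotangent-bundle extension; once $\iota$ is seen to be an open immersion, density is a formal consequence of the irreducibility of $\mathcal{M}_H(r,d)$.
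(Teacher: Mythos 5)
Your proposal is correct, and its first half (Lemma \ref{lem1} plus Serre duality identifying $T^*_{(E,\delta)}\mathcal{M}(r,d)\,\cong\, H^0(X,\mathrm{End}(E)\otimes K_X(D))$ with the space of Higgs fields, hence the tautological map $\iota$, with density at the end coming from irreducibility of $\mathcal{M}_H(r,d)$) is exactly the paper's argument. Where you genuinely diverge is the openness of the image. The paper gets this in one line from Maruyama's theorem on openness of the (semi)stability condition in families \cite{Maru}: the image of $\iota$ is the locus where the underlying bundle is stable, and that locus is open. You instead prove that $\iota$ is \'etale: stability of $E$ gives $H^0(X,\mathrm{End}(E)(-D))\,=\,0$ and, by Serre duality, $H^1(X,\mathrm{End}(E)\otimes K_X(D))\,=\,0$, which collapses the long exact sequence \eqref{e3} to
$$
0\,\longrightarrow\, H^0(X,\mathrm{End}(E)\otimes K_X(D))\,\stackrel{a}{\longrightarrow}\,\mathbb{H}^1(\mathcal{C}_\bullet)\,\stackrel{b}{\longrightarrow}\, H^1(X,\mathrm{End}(E)(-D))\,\longrightarrow\, 0\, ,
$$
and comparison with the tangent sequence of the cotangent bundle via the five lemma shows $d\iota$ is an isomorphism; injectivity plus \'etaleness then gives an open immersion. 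This is a valid alternative, and it is more self-contained: it uses only the deformation theory already developed in the paper, and the two identifications it needs (the map $a$ as ``move $\theta$ linearly'' and $b$ as $dp$) are precisely the ones stated after \eqref{e3} and used in the proof of Proposition \ref{prop1}. It also yields slightly more than the paper's argument, namely that $\iota$ is a local isomorphism, not merely an injection with open image. What it costs is the smoothness of $\mathcal{M}_H(r,d)$ at the image points (Corollary \ref{cor tangent vs def space}, i.e.\ unobstructedness) and the verification that $d\iota$ really intertwines the two short exact sequences, which you assert rather than check -- routine, but it is the one step you should write out; the paper's appeal to \cite{Maru} sidesteps both issues at the price of invoking an external theorem.
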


\begin{proof}
From Lemma \ref{lem1} it follows that $T_{(E,\delta)}{\mathcal M}(r,d)\,=\, H^1(X,\, 
{\rm End}(E)(-D))$. Also, we have $H^1(X,\, {\rm End}(E)(-D))^*\,=\, H^0(X,\, {\rm 
End}(E)\otimes K_X(D))$ (Serre duality). Consequently, the total space of the cotangent 
bundle $T^*{\mathcal M}(r,d)$ embeds into ${\mathcal M}_H(r,d)$;
this embedding will be denoted by $\iota$. From openness of 
the semistability condition, \cite[p.~635, Theorem~2.8(B)]{Maru}, it follows 
that the image of $\iota$ is a Zariski open subset of ${\mathcal M}_H(r,d)$. This also
implies that the image of $\iota$ is Zariski dense in ${\mathcal M}_H(r,d)$, because
${\mathcal M}_H(r,d)$ is irreducible.
\end{proof}

The dimension of the moduli space $\mathcal{M}_{H}(r,d)$ can now be calculated.

\begin{proposition}\label{dim}
Let $n\, =\, \sum_{i=1}^s n_i$. The dimension of $\mathcal{M}_{H}(r,d)$ is $2r^{2}(g_X+n-1)$,
where $g_X$ as before is the genus of $X$.
\end{proposition}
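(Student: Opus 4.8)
The plan is to combine the identification of the tangent space from Corollary \ref{cor tangent vs def space} with a Riemann--Roch computation. By that corollary the deformations of any $(E,\delta,\theta)\,\in\,\mathcal{M}_H(r,d)$ are unobstructed, so $\mathcal{M}_H(r,d)$ is smooth with Zariski tangent space $\mathbb{H}^1(\mathcal{C}_\bullet)$ at every point. Since the variety is irreducible, its dimension equals $\dim_{\mathbb{C}}\mathbb{H}^1(\mathcal{C}_\bullet)$ evaluated at any single point, and it therefore suffices to compute this one number.

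First I would pass to Euler characteristics of the two-term complex $\mathcal{C}_\bullet$ in \eqref{e2}. The long exact sequence relating the hypercohomology $\mathbb{H}^i(\mathcal{C}_\bullet)$ to the ordinary cohomologies of $\mathcal{C}_0\,=\,\text{End}(E)(-D)$ and $\mathcal{C}_1\,=\,\text{End}(E)\otimes K_X(D)$ yields
$$
\sum_i (-1)^i\dim\mathbb{H}^i(\mathcal{C}_\bullet)\,=\,\chi(\mathcal{C}_0)-\chi(\mathcal{C}_1)\, .
$$
Invoking the vanishings $\mathbb{H}^0(\mathcal{C}_\bullet)\,=\,0\,=\,\mathbb{H}^2(\mathcal{C}_\bullet)$ already established in \eqref{hcv} and \eqref{eq no hypercohomologies}, this collapses to
$$
\dim\mathbb{H}^1(\mathcal{C}_\bullet)\,=\,\chi(\mathcal{C}_1)-\chi(\mathcal{C}_0)\, .
$$

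The remaining step is a Riemann--Roch computation on $X$ for each term. Both $\mathcal{C}_0$ and $\mathcal{C}_1$ have rank $r^2$, since $\text{End}(E)$ does; using $\deg\text{End}(E)=0$, $\deg K_X=2g_X-2$ and $\deg D=n$, their degrees are $-r^2 n$ and $r^2(2g_X-2+n)$ respectively. Riemann--Roch then gives $\chi(\mathcal{C}_0)=r^2(1-g_X-n)$ and $\chi(\mathcal{C}_1)=r^2(g_X-1+n)$, and subtracting produces $2r^2(g_X+n-1)$, the claimed dimension. As an alternative route I could instead use Lemma \ref{cor cotangent zar dense}: since $T^*\mathcal{M}(r,d)$ is Zariski open and dense in $\mathcal{M}_H(r,d)$, the two spaces have equal dimension $2\dim\mathcal{M}(r,d)=2\dim H^1(X,\text{End}(E)(-D))$, and stability forces $H^0(X,\text{End}(E)(-D))=0$ (a scalar endomorphism vanishing on the nonzero $D$ must vanish), whence the same Riemann--Roch bookkeeping gives $\dim H^1=r^2(g_X+n-1)$ and the identical answer.

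I do not expect a genuine obstacle here: the substantive input, namely the vanishing of $\mathbb{H}^0$ and $\mathbb{H}^2$ and the smoothness of the moduli space, has already been supplied by Corollary \ref{cor tangent vs def space}, so the only remaining content is the careful tracking of ranks and degrees through Riemann--Roch.
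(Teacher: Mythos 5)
Your proof is correct and follows essentially the same route as the paper: identify the tangent space with $\mathbb{H}^1(\mathcal{C}_\bullet)$ via Corollary \ref{cor tangent vs def space}, use the vanishings \eqref{hcv} and \eqref{eq no hypercohomologies} to reduce to $-\chi(\mathcal{C}_\bullet)$, and finish by Riemann--Roch, with exactly the paper's bookkeeping of ranks and degrees. The alternative route you sketch through Lemma \ref{cor cotangent zar dense} is also sound, but it is not needed and is not what the paper does.
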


\begin{proof}
By Corollary \ref{cor tangent vs def space}
$$
\dim \mathcal{M}_{H}(r,d)\,=\,\dim\mathbb{H}^1(\mathcal{C}_\bullet)\, .
$$
Now, by \eqref{hcv} and \eqref{eq no hypercohomologies}
$${\mathbb H}^0({\mathcal C}_{\bullet})\,=\, 0\,=\, {\mathbb H}^2({\mathcal C}_{\bullet})\, .$$
Hence we have
$$\dim {\mathbb H}^1({\mathcal C}_{\bullet})\, =\, -\chi({\mathcal C}_{\bullet})\,=\, 
-\chi(\text{End}(E)(-D)) +\chi(\text{End}(E)\otimes K_X(D))\, .
$$
By Riemann--Roch, $\chi(\text{End}(E)\otimes K_X(D))\,=\, -\chi (\text{End}(E)(-D))\,=\, r^{2}(g_X+n-1)$.
Therefore, it follows that
$\dim {\mathbb H}^1({\mathcal C}_{\bullet})\, =\, 2r^{2}(g_X+n-1)$.
\end{proof}

\section{Symplectic geometry}

\subsection{A symplectic form on $\mathcal{M}_H(r,d)$.}\label{sect symplectic form}

In this section we construct a symplectic form on $\mathcal{M}_H(r,d)$ following analogous arguments to those in \cite{Bi, Bo, Ma}. In view of 
Corollary \ref{cor tangent vs def space}, we shall start by constructing a nondegenerate 
antisymmetric bilinear form on $\mathbb{H}^1(\mathcal{C}_\bullet)$.

The isomorphism of complexes ${\mathcal C}_{\bullet}\, \stackrel{\sim}{\longrightarrow}\,
\mathcal{C}^{\bullet}$ in \eqref{complexes} produces an isomorphism of hypercohomologies
\begin{equation}\label{mapB}
B_{\theta}\,:\, \mathbb{H}^{1}(\mathcal{C}_{\bullet})\,\stackrel{\sim}{\longrightarrow}\,
\mathbb{H}^{1}(\mathcal{C}^{\bullet})\, .
\end{equation}
On the other hand, Serre duality gives an isomorphism
\begin{equation}\label{sdis}
\mathbb{H}^{1}(\mathcal{C}^{\bullet})
\, \stackrel{\sim}{\longrightarrow}\,\mathbb{H}^{1}(\mathcal{C}_{\bullet})^*\, .
\end{equation}
We recall the explicit description of Serre duality in this case. First note that in view of the isomorphism
in \eqref{mapB}, the isomorphism in \eqref{sdis} is uniquely determined by the corresponding isomorphism
\begin{equation}\label{mapBl}
\mathbb{H}^{1}(\mathcal{C}_{\bullet})\,\stackrel{\sim}{\longrightarrow}\, \mathbb{H}^{1}(\mathcal{C}_{\bullet})^*
\end{equation}
constructed using \eqref{mapB}. To construct the isomorphism in \eqref{mapBl}, consider the tensor product of complexes
$\mathcal{C}_{\bullet}\otimes \mathcal{C}_{\bullet}$:
$$
(\mathcal{C}_{\bullet}\otimes \mathcal{C}_{\bullet})_0\,=\,
\text{End}(E)(-D)\otimes \text{End}(E)(-D)\, \stackrel{(f_\theta\otimes {\rm Id})+
({\rm Id}\otimes f_\theta)}{\longrightarrow}\, (\mathcal{C}_{\bullet}\otimes \mathcal{C}_{\bullet})_1
$$
$$
=\, (\text{End}(E)\otimes K_X(D)\otimes \text{End}(E)(-D))\oplus (\text{End}(E)(-D)\otimes \text{End}(E)\otimes K_X(D))
$$
$$
\stackrel{({\rm Id}\otimes f_\theta)-
(f_\theta\otimes {\rm Id})}{\longrightarrow}\, (\mathcal{C}_{\bullet}\otimes \mathcal{C}_{\bullet})_2\,=\,
(\text{End}(E)\otimes K_X(D))\otimes (\text{End}(E)\otimes K_X(D))\, .
$$
We also have the homomorphism
$$
\rho\, :\, (\text{End}(E)\otimes K_X(D)\otimes \text{End}(E)(-D))\oplus (\text{End}(E)(-D)\otimes \text{End}(E)\otimes K_X(D))
$$
$$
\longrightarrow\, K_X\, , \ \ (a_1\otimes b_1)\oplus (a_2\otimes b_2)\, \longmapsto\, \text{Tr}(a_1\circ b_1+a_2\circ b_2)\, .
$$
In the above, $a_1$ and $b_2$ are sections of $\mathrm{End}(E)\otimes K_X(D)$ and $a_2$ and $b_1$ are sections of $\mathrm{End}(E)(-D)$. 
These give the homomorphism of complexes
\begin{equation}\label{hh1}
\begin{matrix}
(\mathcal{C}_{\bullet}\otimes \mathcal{C}_{\bullet})_0 & \longrightarrow &
(\mathcal{C}_{\bullet}\otimes \mathcal{C}_{\bullet})_1 & \longrightarrow & 
(\mathcal{C}_{\bullet}\otimes \mathcal{C}_{\bullet})_2\\
\Big\downarrow && ~\Big\downarrow\rho && \Big\downarrow\\
0 & \longrightarrow & K_X & \longrightarrow & 0
\end{matrix}
\end{equation}
Now we have the composition of homomorphisms of hypercohomologies
\begin{equation}\label{ch}
{\mathbb H}^1({\mathcal C}_{\bullet})\otimes
{\mathbb H}^1({\mathcal C}_{\bullet})\, \longrightarrow\, {\mathbb H}^2(\mathcal{C}_{\bullet}\otimes \mathcal{C}_{\bullet})
\, \stackrel{\rho'}{\longrightarrow}\,{\mathbb H}^2(0\rightarrow K_X \rightarrow 0)\,=\,
H^1(X, \, K_X) \,=\, {\mathbb C}\, ,
\end{equation}
where $\rho'$ is given by the homomorphism of complexes in \eqref{hh1}.
Let
\begin{equation}\label{f4}
\Psi_\theta\, :\, {\mathbb H}^1({\mathcal C}_{\bullet})\otimes
{\mathbb H}^1({\mathcal C}_{\bullet})\, \longrightarrow\, \mathbb C
\end{equation}
be the bilinear form constructed in \eqref{ch}.

The earlier mentioned Serre duality in \eqref{mapBl}
is given by the bilinear form $\Psi_\theta$ in \eqref{f4}. Note that $\Psi_\theta$ is nondegenerate because the
homomorphism in \eqref{mapBl} is an isomorphism. Also, the anti-symmetry of $\Psi_\theta$ comes from its construction and it is related to the graded symmetry of the cup-product in cohomology.

Therefore, we have the following:

\begin{proposition}\label{prop 2 form}
There is a nondegenerate holomorphic two form $\Psi$ on $\mathcal{M}_H(r,d)$ whose
evaluation at any point $(E,\,\delta,\,\theta)\, \in\, \mathcal{M}_H(r,d)$ is $\Psi_\theta$
in \eqref{f4}.
\end{proposition}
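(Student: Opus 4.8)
The form $\Psi_\theta$ has already been constructed pointwise and shown to be antisymmetric and nondegenerate, so what remains is to verify that these pointwise forms fit together into a global holomorphic section of $\wedge^2 T^*\mathcal{M}_H(r,d)$. The plan is to carry out the entire construction of \eqref{ch}--\eqref{f4} relatively over the moduli space, and then observe that it restricts fiberwise to $\Psi_\theta$; neither closedness nor any further fiberwise property is needed here, since the proposition only asserts the existence of a nondegenerate two-form.

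First I would use that $\mathcal{M}_H(r,d)$ is a \emph{fine} moduli space (Simpson \cite{Si1,Si2}; the absence of nontrivial automorphisms is Corollary \ref{cor0}) to obtain a universal framed Higgs bundle $(\mathcal{E},\, \Delta,\, \Theta)$ on $X\times\mathcal{M}_H(r,d)$. Writing $\pi$ and $p_X$ for the two projections, the universal Higgs field $\Theta$ yields a relative version of the complex \eqref{e2},
$$
\mathcal{C}_\bullet^{\mathrm{rel}}\, :\, \text{End}(\mathcal{E})(-D)\,\stackrel{f_\Theta}{\longrightarrow}\,\text{End}(\mathcal{E})\otimes p_X^*K_X(D),\qquad f_\Theta(s)\,=\,\Theta\circ s-s\circ\Theta,
$$
a complex of coherent sheaves on $X\times\mathcal{M}_H(r,d)$ whose restriction to any fiber $X\times\{(E,\delta,\theta)\}$ is the complex $\mathcal{C}_\bullet$ of that framed Higgs bundle.

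Next I would pass to relative hypercohomology. Since $\mathbb{H}^0({\mathcal C}_{\bullet})\,=\,0\,=\,\mathbb{H}^2({\mathcal C}_{\bullet})$ fiberwise by Corollary \ref{cor tangent vs def space}, the relevant Euler characteristic is constant and cohomology-and-base-change applies, so $R^1\pi_*\mathcal{C}_\bullet^{\mathrm{rel}}$ is locally free and its formation commutes with base change; by Corollary \ref{cor tangent vs def space} it is canonically the tangent bundle $T\mathcal{M}_H(r,d)$, with fiber $\mathbb{H}^1(\mathcal{C}_\bullet)$ at $(E,\delta,\theta)$. The cup product together with the relative trace map $\rho$ of \eqref{hh1} then become morphisms of sheaves on $\mathcal{M}_H(r,d)$,
$$
R^1\pi_*\mathcal{C}_\bullet^{\mathrm{rel}}\otimes R^1\pi_*\mathcal{C}_\bullet^{\mathrm{rel}}\,\longrightarrow\, R^2\pi_*\bigl(\mathcal{C}_\bullet^{\mathrm{rel}}\otimes\mathcal{C}_\bullet^{\mathrm{rel}}\bigr)\,\stackrel{\rho'}{\longrightarrow}\,R^1\pi_*\,p_X^*K_X\,\cong\,\mathcal{O}_{\mathcal{M}_H(r,d)},
$$
the final identification coming from $H^1(X,\,K_X)\,=\,{\mathbb C}$ exactly as in \eqref{ch}. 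Because each of these maps is natural and compatible with restriction to fibers, their composite is a global section of $(T^*\mathcal{M}_H(r,d))^{\otimes 2}$ which at each point specializes to $\Psi_\theta$.

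Finally I would conclude as follows. Antisymmetry and nondegeneracy of $\Psi_\theta$ were already established fiberwise---the latter through the isomorphism \eqref{mapBl}---so the global section just produced is in fact a section of $\wedge^2 T^*\mathcal{M}_H(r,d)$ that is everywhere nondegenerate, that is, a nondegenerate holomorphic two-form $\Psi$ with $\Psi|_{(E,\delta,\theta)}\,=\,\Psi_\theta$. The main obstacle I anticipate is the base-change bookkeeping needed to guarantee that the relative constructions genuinely restrict to the pointwise $\Psi_\theta$; this is precisely where the vanishing $\mathbb{H}^0\,=\,\mathbb{H}^2\,=\,0$ of Corollary \ref{cor tangent vs def space} does the essential work, ensuring that $R^1\pi_*\mathcal{C}_\bullet^{\mathrm{rel}}$ is locally free and compatible with specialization to fibers, so that holomorphicity of $\Psi$ follows from the functoriality of cup product and trace.
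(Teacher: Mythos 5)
Your proof is correct and takes essentially the same route as the paper: the paper's own proof simply asserts that the pointwise construction of $\Psi_\theta$ ``clearly works for families'' of framed Higgs bundles and invokes Corollary \ref{cor tangent vs def space}, which is precisely the relativization you carry out. The only difference is that you supply the universal-family, relative-hypercohomology and cohomology-and-base-change bookkeeping (using $\mathbb{H}^0=\mathbb{H}^2=0$) that the paper leaves implicit.
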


\begin{proof}
The above pointwise construction of $\Psi_\theta$ clearly works for families of framed
Higgs bundles. Recall from Corollary \ref{cor tangent vs def space} that $\mathbb{H}^1(\mathcal{C}_\bullet)$ is the tangent space to
$\mathcal{M}_H(r,d)$ at the point $(E,\,\delta,\,\theta)$.
\end{proof}

Before exploring $\Psi_\theta$ further, we introduce a one form on $\mathcal{M}_H(r,d)$.

Consider the short exact sequence of complexes of coherent sheaves on $X$
$$
\begin{matrix}
&0 && 0\\
&\Big\downarrow && \Big\downarrow\\
{\mathcal D}_\bullet:&0 & \longrightarrow & \text{End}(E)\otimes K_X(D)\\
&\Big\downarrow && \Big\downarrow\\
{\mathcal C}_\bullet:&\text{End}(E)(-D) & \stackrel{f_\theta}{\longrightarrow}& \text{End}(E)\otimes K_X(D)\\
&\Big\downarrow && \Big\downarrow\\
{\mathcal E}_\bullet:&\text{End}(E)(-D) &\longrightarrow & 0\\
&\Big\downarrow && \Big\downarrow\\
&0 && 0
\end{matrix}
$$
In the above diagram, both of the complexes ${\mathcal D}_\bullet$ and ${\mathcal E}_\bullet$ have only one nonzero term.
Hence their hypercohomologies are just (shifted) cohomologies of the
single nonzero term. Thus we have an associated long exact sequence of hypercohomologies
$$
0\, \longrightarrow\, {\mathbb H}^0({\mathcal C}_{\bullet})\, \longrightarrow\, {\mathbb H}^0({\mathcal E}_\bullet)\,=\,
H^0(X,\, \text{End}(E)(-D)) \, \longrightarrow\, {\mathbb H}^1({\mathcal D}_\bullet)
$$
\begin{equation}\label{e3}
=\, H^0(X,\, \text{End}(E)\otimes K_X(D))\, \stackrel{a}{\longrightarrow}\,{\mathbb H}^1({\mathcal C}_{\bullet})
\,\stackrel{b}{\longrightarrow}\, {\mathbb H}^1({\mathcal E}_\bullet)\,=\, H^1(X,\, \text{End}(E)(-D))
\end{equation}
$$
 \longrightarrow\,{\mathbb H}^2({\mathcal D}_\bullet)\,=\, H^1(X,\, \text{End}(E)\otimes K_X(D))
\, \longrightarrow\,{\mathbb H}^2({\mathcal C}_{\bullet})\, \longrightarrow\, 0\, .
$$
The homomorphism $b$ in \eqref{e3} is the forgetful map that sends an
infinitesimal deformation of $(E,\, \delta, \, \theta)$ to the infinitesimal
deformation of $(E,\, \delta)$ obtained from it by simply forgetting the Higgs field.
The homomorphism $a$ in \eqref{e3} is the one that sends a section $$u\, \in\,
H^0(X,\, \text{End}(E)\otimes K_X(D))$$ to the infinitesimal deformation of
$(E,\, \delta, \, \theta)$ defined by $t\, \longmapsto\, (E,\, \delta, \, \theta+tu)$.

Since $H^0(X,\, {\rm End}(E)\otimes K_X(D))\,=\, H^1(X,\, {\rm End}(E)(-D))^*$ (Serre duality),
we have a homomorphism
\begin{equation}\label{e4}
\Phi_\theta\, :\, {\mathbb H}^1({\mathcal C}_{\bullet})\,\longrightarrow\,{\mathbb C}\, ,
\ \ v \,\longmapsto\, \theta(b(v))\, ,
\end{equation}
where $b$ is the homomorphism in \eqref{e3}.

The above construction produces the following:

\begin{proposition}\label{prop 1 form}
There is a holomorphic one-form $\Phi$ on $\mathcal{M}_H(r,d)$ whose 
evaluation at any point $(E,\,\delta,\,\theta)\, \in\, \mathcal{M}_H(r,d)$ is $\Phi_\theta$
in \eqref{e4}.
\end{proposition}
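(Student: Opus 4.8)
The plan is to show that the pointwise assignment $(E,\,\delta,\,\theta)\,\longmapsto\, \Phi_\theta$ defines a holomorphic section of the cotangent bundle $T^*\mathcal{M}_H(r,d)$, in direct analogy with the treatment of the two-form $\Psi$ in Proposition \ref{prop 2 form}. By construction $\Phi_\theta$ is a linear functional on $\mathbb{H}^1(\mathcal{C}_\bullet)\,=\, T_{(E,\delta,\theta)}\mathcal{M}_H(r,d)$ (Corollary \ref{cor tangent vs def space}), so the content of the statement is entirely that this functional varies holomorphically with the moduli point; once that is established, $\Phi$ is a well-defined holomorphic one-form.

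First I would pass to a universal family. Since $\mathcal{M}_H(r,d)$ is a fine moduli space, there is a universal framed Higgs bundle $(\mathcal{E},\,\Delta,\,\Theta)$ on $X\times\mathcal{M}_H(r,d)$; let $p$ denote the projection onto $\mathcal{M}_H(r,d)$. Forming the relative version of the complex $\mathcal{C}_\bullet$ in \eqref{e2} with $\Theta$ in place of $\theta$, the relative first hypercohomology is locally free and recovers the tangent bundle $T\mathcal{M}_H(r,d)$; local freeness and compatibility with base change follow from the fibrewise vanishing $\mathbb{H}^0(\mathcal{C}_\bullet)\,=\,0\,=\,\mathbb{H}^2(\mathcal{C}_\bullet)$ of \eqref{hcv} and \eqref{eq no hypercohomologies}, which forces the relative $\mathbb{H}^1$ to have constant rank.

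Next I would globalize the three ingredients of \eqref{e4}. The forgetful map $b$ assembles into a morphism of vector bundles $T\mathcal{M}_H(r,d)\,\longrightarrow\, R^1p_*(\text{End}(\mathcal{E})(-D))$, arising from the relative analogue of the short exact sequence of complexes $0\,\to\,\mathcal{D}_\bullet\,\to\,\mathcal{C}_\bullet\,\to\,\mathcal{E}_\bullet\,\to\,0$ and its long exact sequence \eqref{e3}. Relative Serre duality identifies $(R^1p_*(\text{End}(\mathcal{E})(-D)))^*$ with $p_*(\text{End}(\mathcal{E})\otimes K_X(D))$, and the universal Higgs field $\Theta$ is a global section of the latter. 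Pairing $\Theta$ against $b$ then yields a holomorphic section of $T^*\mathcal{M}_H(r,d)$ whose value at $(E,\,\delta,\,\theta)$ is exactly $v\,\longmapsto\,\theta(b(v))\,=\,\Phi_\theta(v)$; this is the sought one-form $\Phi$.

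I expect the main obstacle to be technical rather than conceptual: one must confirm that the formation of relative hypercohomology commutes with base change, so that it truly produces a vector bundle with fibres $\mathbb{H}^1(\mathcal{C}_\bullet)$, and that relative Serre duality is compatible with the universal data. Both reduce to standard cohomology-and-base-change arguments once the fibrewise vanishing of $\mathbb{H}^0$ and $\mathbb{H}^2$ is invoked, so no genuinely new difficulty appears beyond bookkeeping. As a consistency check, on the Zariski open dense subset $\iota(T^*\mathcal{M}(r,d))$ of Lemma \ref{cor cotangent zar dense}, where the underlying framed bundle is stable and $b$ is the derivative of the bundle projection $T^*\mathcal{M}(r,d)\,\to\,\mathcal{M}(r,d)$, the identity $\Phi_\theta(v)\,=\,\theta(b(v))$ shows that $\Phi$ restricts to the tautological Liouville one-form, which is manifestly holomorphic.
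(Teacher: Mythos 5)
Your proposal is correct and takes essentially the same route as the paper: the paper's entire proof is the remark that the pointwise construction of $\Phi_\theta$ ``clearly works for families of framed Higgs bundles'', and your argument is exactly that family-version construction made explicit (universal framed Higgs bundle, relative hypercohomology with base change justified by the fibrewise vanishing of $\mathbb{H}^0$ and $\mathbb{H}^2$, and pairing against the universal Higgs field $\Theta$). One small technical remark: since the underlying bundle of a semistable framed Higgs bundle need not be simple, $R^1p_*(\mathrm{End}(\mathcal{E})(-D))$ is not obviously locally free, so instead of invoking relative Serre duality as an identification of sheaves it is cleaner to define the pairing with $\Theta$ directly via relative cup product and trace, $T\mathcal{M}_H(r,d)\longrightarrow R^1p_*(\mathrm{End}(\mathcal{E})(-D))\longrightarrow R^1p_*(q_0^*K_X)\cong \mathcal{O}_{\mathcal{M}_H(r,d)}$, which recovers $\Phi_\theta$ fibrewise because $R^1p_*$ always commutes with base change for a family of curves.
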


\begin{proof}
The above pointwise construction of $\Phi_\theta$ clearly works for families of framed
Higgs bundles.
\end{proof}

Using the arguments in \cite[Theorem 4.6]{BR} it can be shown that $\Psi_\theta$ defined in \eqref{f4} is the evaluation at $(E,\delta,\theta)$ of the form $d\Phi$. We hereby give a somewhat simplified proof (see Theorem \ref{thm1}).

\subsection{Relating to the Liouville symplectic form}

For any complex manifold $N$, the total space $T^*N$ of the holomorphic cotangent
bundle of $N$ is equipped with the Liouville holomorphic $1$-form, which will be denoted by $\eta_N$. The
holomorphic 2-form $d\eta_N$ is the Liouville symplectic form on $T^*N$.

Recall from Lemma \ref{cor cotangent zar dense} that we have a Zariski dense open subset
$$\iota\,:\,T^*\mathcal{M}(r,d)
\,\subset\, \mathcal{M}_H(r,d)\, .$$
In this subsection we will prove that $\iota^*\Phi$ is the Liouville $1$-form and
$\iota^*\Psi$ is the Liouville symplectic form on the cotangent bundle $T^*\mathcal{M}(r,d)$.

\begin{proposition}\label{prop1}
The Liouville $1$-form on $T^*{\mathcal M}(r,d)$ coincides with the pullback
$\iota^*\Phi$, where $\iota$ is the embedding in \eqref{eq iota} and $\Phi$ is the 1-form in Proposition \ref{prop 1 form}.

The pullback $\iota^*\Psi$ is the Liouville symplectic form on $T^*{\mathcal M}(r,d)$,
meaning $\iota^*\Psi\,=\, d\iota^*\Phi$, where $\Psi$ is the 2-form in Proposition \ref{prop 2 form}.
\end{proposition}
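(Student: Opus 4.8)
\emph{Proof proposal.} The plan is to treat the two assertions separately: the identification of the $1$-form is essentially formal, while the identification of the $2$-form carries the computational content.

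For the statement $\iota^{*}\Phi=\eta_{\mathcal M(r,d)}$, I would first make the long exact sequence \eqref{e3} explicit along the image of $\iota$. On the stable locus the scalar endomorphisms do not vanish on the nonzero divisor $D$, so $H^{0}(X,\,\mathrm{End}(E)(-D))=0$; together with \eqref{hcv} this collapses the left end of \eqref{e3} into the short exact sequence
$$0\longrightarrow H^{0}(X,\,\mathrm{End}(E)\otimes K_X(D))\stackrel{a}{\longrightarrow}\mathbb{H}^{1}(\mathcal C_\bullet)\stackrel{b}{\longrightarrow}H^{1}(X,\,\mathrm{End}(E)(-D))\longrightarrow 0\, .$$
Under the isomorphism $d\iota$ of Corollary \ref{cor tangent vs def space} I would match this with the tautological sequence $0\to T^{*}_{(E,\delta)}\mathcal M\to T(T^{*}\mathcal M)\xrightarrow{d\pi}T_{(E,\delta)}\mathcal M\to 0$: the map $a$, sending $u$ to the deformation $\theta+tu$, is the inclusion of the fibre (vertical) directions, and $b$, being the forgetful map of \eqref{e3}, is precisely $d\pi$. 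Since $T^{*}_{(E,\delta)}\mathcal M=H^{0}(X,\,\mathrm{End}(E)\otimes K_X(D))$ is the Serre dual of $T_{(E,\delta)}\mathcal M=H^{1}(X,\,\mathrm{End}(E)(-D))$ by Lemma \ref{lem1}, the canonical pairing of a cotangent with a tangent vector is the Serre pairing. Hence the tautological form evaluates as $\eta(v)=\langle\theta,\,d\pi(v)\rangle=\langle\theta,\,b(v)\rangle=\Phi_\theta(v)$, which is exactly the definition \eqref{e4}; therefore $\iota^{*}\Phi=\eta$.

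For the second assertion, since $\iota^{*}\Phi=\eta$ we have $d\eta=d(\iota^{*}\Phi)$, so it suffices to prove the pointwise identity $\iota^{*}\Psi=d\eta$ on $T^{*}\mathcal M(r,d)$. I would compute $\Psi_\theta$ from \eqref{ch}--\eqref{f4} in the \v{C}ech--hypercohomology picture of Lemma \ref{lem2}: a tangent vector is a pair $(\{\eta_{jk}\},\{\gamma_j\})$ with $\{\eta_{jk}\}$ a $1$-cocycle in $\mathrm{End}(E)(-D)$, $\{\gamma_j\}$ a $0$-cochain in $\mathrm{End}(E)\otimes K_X(D)$, and $\gamma_j-\gamma_k=f_\theta(\eta_{jk})$; applying $\rho$ to the cup product, $\Psi_\theta$ is represented by the $K_X$-valued \v{C}ech $1$-cocycle $\mathrm{Tr}(\gamma_j\circ\eta'_{jk})-\mathrm{Tr}(\eta_{jk}\circ\gamma'_k)$. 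On two vertical vectors $a(u)=(0,\{u\})$ this vanishes, so the fibre directions are isotropic, matching $d\eta$; and against an arbitrary $v$ it returns $\langle u,\,b(v)\rangle$, which is exactly $d\eta(a(u),v)$. Equivalently, for the Euler (fibre-scaling) vector field $R=a(\theta)$ the same computation gives $\iota_R(\iota^{*}\Psi)=\langle\theta,\,b(\cdot)\rangle=\iota^{*}\Phi=\eta=\iota_R(d\eta)$, so $\iota^{*}\Psi$ and $d\eta$ agree on every pair containing a fibre direction.

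What remains is the purely horizontal comparison, and this is the main obstacle. Fixing a local holomorphic family $(E_q,\delta_q)$ that provides coordinates $q^{i}$ on $\mathcal M(r,d)$ with Serre-dual fibre coordinates $p_i$, the coordinate horizontal lift $H_i$ of $\partial_{q^{i}}$ is represented by $(\{\eta^{(i)}_{jk}\},\{\gamma^{(i)}_j\})$ with $\eta^{(i)}_{jk}=g_{jk}^{-1}\partial_{q^{i}}g_{jk}$ and $\gamma^{(i)}_j=\partial_{q^{i}}\theta_j$, and since $d\eta(H_i,H_l)=0$ I must show $\Psi_\theta(H_i,H_l)=0$. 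Substituting the relation $\gamma^{(i)}_j-\gamma^{(i)}_k=f_\theta(\eta^{(i)}_{jk})=[\theta,\eta^{(i)}_{jk}]$ and using cyclicity of the trace, the cocycle representative reorganizes into $\mathrm{Tr}(\partial_{q^{i}}\theta\cdot\eta^{(l)})-\mathrm{Tr}(\partial_{q^{l}}\theta\cdot\eta^{(i)})-\mathrm{Tr}(\theta[\eta^{(i)},\eta^{(l)}])$. The integrability (Maurer--Cartan) identity $[\eta^{(i)},\eta^{(l)}]=\partial_{q^{i}}\eta^{(l)}-\partial_{q^{l}}\eta^{(i)}$ for the family then collects the three terms, by the Leibniz rule, into $\partial_{q^{i}}\mathrm{Tr}(\theta\cdot\eta^{(l)})-\partial_{q^{l}}\mathrm{Tr}(\theta\cdot\eta^{(i)})$; and since the $1$-cocycle $\mathrm{Tr}(\theta_j\eta^{(l)}_{jk})$ represents the Serre pairing $\langle\theta,\,b(H_l)\rangle=p_l$, its $q$-derivative at fixed $p$ represents $\partial_{q^{i}}p_l=0$ in $H^{1}(X,\,K_X)$. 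Hence the class vanishes and $\Psi_\theta(H_i,H_l)=0$. This integrability step is the only delicate point; combining the three cases gives $\iota^{*}\Psi=d\eta$ on the dense open set $\iota(T^{*}\mathcal M(r,d))$ of Lemma \ref{cor cotangent zar dense}, that is $\iota^{*}\Psi=d\iota^{*}\Phi$, as claimed (compare \cite[Theorem 4.6]{BR}).
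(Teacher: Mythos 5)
Your proposal is correct, and its first half is essentially the paper's own argument with more detail: the paper simply observes that the map $b$ in \eqref{e3} is the forgetful map, hence coincides with the differential $dp$ of the projection, so that the definition \eqref{e4} of $\Phi_\theta$ literally reproduces the definition of the Liouville $1$-form; your explicit collapse of \eqref{e3} into a short exact sequence matched with the tautological sequence $0\to T^*_{(E,\delta)}\mathcal{M}(r,d)\to T_z T^*\mathcal{M}(r,d)\to T_{(E,\delta)}\mathcal{M}(r,d)\to 0$ (using $H^0(X,\mathrm{End}(E)(-D))=0$ for stable $E$) is the same identification, spelled out. Where you genuinely diverge is the second claim: the paper does not prove $\iota^*\Psi=d\iota^*\Phi$ at all, but declares it ``a standard argument'' and cites \cite[Theorem~4.3]{BR}, \cite[Proposition~7.3]{Bi}, \cite[Theorem~4.5.1]{Bo}, \cite[Corollary~7.15]{Ma}, omitting the details; you instead carry out that computation --- the \v{C}ech cup-product representative of $\Psi_\theta$, isotropy of the fibres, the mixed pairing $\langle u,\,b(v)\rangle$, and the horizontal--horizontal cancellation via the integrability identity of a local family --- which amounts to reconstructing the argument of \cite{BR} in the framed setting. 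This buys a self-contained proof at the cost of sign bookkeeping, and one sign does need fixing: with your convention $\eta^{(i)}_{jk}=g_{jk}^{-1}\partial_{q^i}g_{jk}$, the Maurer--Cartan identity reads $\partial_{q^i}\eta^{(l)}-\partial_{q^l}\eta^{(i)}=-[\eta^{(i)},\eta^{(l)}]$ (the sign you wrote belongs to the convention $\eta^{(i)}=\partial_{q^i}g\cdot g^{-1}$); as literally stated, your three terms would not collect into $\partial_{q^i}\mathrm{Tr}(\theta\,\eta^{(l)})-\partial_{q^l}\mathrm{Tr}(\theta\,\eta^{(i)})$, whereas with a consistently chosen set of conventions (compatibility $\gamma_j-\gamma_k=f_\theta(\eta_{jk})$, cup product, Kodaira--Spencer cocycle) they do, and your final step --- that this derivative cocycle represents $\partial_{q^i}p_l-\partial_{q^l}p_i=0$ in $H^1(X,K_X)$ --- is sound. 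So the strategy is complete and correct; only the conventions need to be made mutually consistent.
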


\begin{proof}
Let
$$
p\, :\, T^*{\mathcal M}(r,d)\, \longrightarrow\, {\mathcal M}(r,d)
$$
be the natural projection. For any $z\, :=\, (E,\, \delta, \, \theta)\,
\in\, T^*{\mathcal M}(r,d)\, \subset\, {\mathcal M}_H(r,d)$, let
$$
dp(z) \, :\, T_zT^*{\mathcal M}(r,d)\, \longrightarrow\, T_{p(z)} {\mathcal M}(r,d)
$$
be the differential of $p$ at $z$. We noted earlier that the
homomorphism $b$ in \eqref{e3} is the
forgetful map that sends an
infinitesimal deformation of $(E,\, \delta, \, \theta)$ to the infinitesimal
deformation of $(E,\, \delta)$ obtained from it by simply forgetting the Higgs field. This
means that $b$ coincides with the above homomorphism $dp(z)$. Now from the
definition of the Liouville $1$-form on $T^*{\mathcal M}(r,d)$, and the
construction of $\Phi_\theta$ in \eqref{e4}, it follows immediately
that $\iota^*\Phi$ is the Liouville $1$-form on $T^*{\mathcal M}(r,d)$.

The fact that $\iota^*\Psi\,=\, d\iota^*\Phi$ follows from a standard argument (see \cite[Theorem 4.3]{BR}, \cite[Proposition 7.3]{Bi},
\cite[Theorem 4.5.1]{Bo}, \cite[Corollary 7.15]{Ma}). We omit the details.
\end{proof}

\begin{theorem}\label{thm1}
The holomorphic $2$-form $\Psi$ on ${\mathcal M}_H(r,d)$ is symplectic. Moreover,
$$
\Psi\,=\,d\Phi
$$
on $\mathcal{M}_H(r,d)$.
\end{theorem}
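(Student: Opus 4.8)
The plan is to deduce both assertions directly from Proposition \ref{prop1} together with the density statement in Lemma \ref{cor cotangent zar dense}, so that no new computation is required. The identity $\Psi = d\Phi$ has effectively already been verified on the Zariski dense open subset $\iota(T^*\mathcal{M}(r,d)) \subset \mathcal{M}_H(r,d)$: Proposition \ref{prop1} asserts that $\iota^*\Psi = d\,\iota^*\Phi$. Since the exterior derivative commutes with pullback along the holomorphic embedding $\iota$ of \eqref{eq iota}, one has $d\,\iota^*\Phi = \iota^*(d\Phi)$, and therefore $\iota^*(\Psi - d\Phi) = 0$. Thus the holomorphic $2$-form $\Psi - d\Phi$ on $\mathcal{M}_H(r,d)$ restricts to zero on the image of $\iota$.

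First I would upgrade this to a global identity. The form $\Psi - d\Phi$ is holomorphic on the smooth irreducible (hence connected) variety $\mathcal{M}_H(r,d)$, and it vanishes on the Zariski dense open subset $\iota(T^*\mathcal{M}(r,d))$ supplied by Lemma \ref{cor cotangent zar dense}. A holomorphic differential form on a connected complex manifold that vanishes on a nonempty open subset must vanish identically, since in any local chart its coefficient functions are holomorphic and vanish on a set with nonempty interior, hence vanish everywhere by the identity theorem. Consequently $\Psi = d\Phi$ on all of $\mathcal{M}_H(r,d)$, which is the second assertion of the theorem.

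It then remains to conclude that $\Psi$ is symplectic, and this follows formally. Nondegeneracy of $\Psi$ at every point is already recorded in Proposition \ref{prop 2 form}. Closedness is immediate once exactness is known: since $\Psi = d\Phi$, we obtain $d\Psi = d(d\Phi) = 0$. A closed nondegenerate holomorphic $2$-form is by definition symplectic, so $\Psi$ is symplectic, and it is moreover exact.

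The only genuinely delicate point is the propagation of the identity from the dense open subset to the whole space carried out in the second paragraph; everything else is formal bookkeeping. This step rests on two ingredients already established: the smoothness and irreducibility of $\mathcal{M}_H(r,d)$, which turn \emph{vanishing on a dense open subset} into \emph{vanishing everywhere}, and the density of $\iota(T^*\mathcal{M}(r,d))$ from Lemma \ref{cor cotangent zar dense}. Once this is in place, both the exactness $\Psi = d\Phi$ and the closedness of $\Psi$ follow with no further work.
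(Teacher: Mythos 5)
Your proposal is correct and follows essentially the same route as the paper's own proof: both deduce $\Psi = d\Phi$ from Proposition \ref{prop1} on the dense open subset $\iota(T^*\mathcal{M}(r,d))$ of Lemma \ref{cor cotangent zar dense}, propagate the identity to all of $\mathcal{M}_H(r,d)$, and then combine exactness with the nondegeneracy already recorded in Proposition \ref{prop 2 form}. The only difference is that you spell out the propagation step (openness of the embedding plus the identity theorem for holomorphic coefficients), which the paper leaves implicit.
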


\begin{proof}
We know that $\Psi$ is nondegenerate and antisymmetric. So it suffices to prove that
$\Psi\,=\, d\Phi$. By Proposition \ref{prop1}, the form $\Psi$ coincides
with $d\Phi$ on the dense open subset $T^*{\mathcal M}(r,d)$ of ${\mathcal M}_H(r,d)$.
This implies that $\Psi\,=\, d\Phi$ on the entire ${\mathcal M}_H(r,d)$.
\end{proof}
 
\section{Poisson maps}

In this section we compare the Poisson structure on $\mathcal{M}_H(r,d)$ with the
Poisson structure on the moduli space $\mathcal{N}_{H}(r,d)$ of stable Hitchin pairs.

For any holomorphic symplectic manifold $(M,\, \omega)$, we may construct a Poisson bracket of 
any two locally defined holomorphic functions $f,\, g\,\in\, \mathcal{O}_{M}$, by means of the
Hamiltonian vector fields $X_{f}$ and $X_{g}$ associated to them, that is,
$$
\{f,\, g\}\,:=\, X_{f} g\, =\, -X_{g} f\,=\, -\{g,\, f\}\, ;
$$
the Hamiltonian vector field $X_h$ for a function $h$
is defined by the equation $dh(v)\,=\, \omega(X_{h},\,v)$, where $v$ is any vector field
on $M$. The above pairing $\{\cdot,\cdot\}$ provides $\mathcal{O}_{M}$ with a Lie algebra structure
which satisfies the Leibniz rule which says that $\{f,\,gh\}\,=\,\{f,\,g\}h+g\{f,\, h\}$; therefore
this pairing produces a Poisson structure on $M$. 

Let $(Y_1,\, \omega_1)$ and $(Y_2,\, \omega_2)$ be two holomorphic Poisson manifolds, where
$$
\omega_1\, :\, T^*Y_1\, \longrightarrow\, TY_1\ \ \text{ and }\ \
\omega_2\, :\, T^*Y_2\, \longrightarrow\, TY_2
$$
are the holomorphic homomorphisms giving the Poisson structures. A holomorphic map
$$
\beta\, :\, Y_1\, \longrightarrow\, Y_2
$$
is said to be compatible with the Poisson structures if
$$\{f,\, g\}_2\circ\beta \,=\, \{ f\circ\beta,\, g\circ\beta\}_1$$
for all locally defined holomorphic functions $f,\, g$ on $Y_2$, where $\{-,\, -\}_1$ and
$\{-,\, -\}_2$ are the Poisson brackets on $Y_1$ and $Y_2$ respectively.

Let
\begin{equation}\label{db}
d\beta\, :\, TY_1\, \longrightarrow\, TY_2
\end{equation}
be the differential of the map $\beta$. It is straight-forward to check that the map $\beta$ is
compatible with the Poisson structures if and only if
\begin{equation}\label{pch}
d\beta(\omega_1(x)((d\beta)^*(x)(u)))\,=\, \omega_2(u)
\end{equation}
for all $u\, \in\, T^*_yY_2$ and $x\,\in\,\beta^{-1}(y)$, where $(d\beta)^*(x)\, :\, T^*_yY_2\, \longrightarrow\,
T^*_xY_1$ is the dual of the differential $d\beta (x)$ in \eqref{db}. Note that both sides of \eqref{pch} are
elements of $T_y Y_2$.

Let $\mathcal{N}_{H}(r,d)$ be the moduli space of stable Hitchin pairs $(E,\, \theta)$, where
\begin{itemize}
\item $E$ is a holomorphic vector bundle on $X$ of rank $r$ and degree $d$, and
	
\item $\theta\, \in\, H^0(X,\, \text{End}(E)\otimes K_X(D))$.
\end{itemize}
The choice of a section $s$ of $\mathcal{O}_X(D)$
determines a holomorphic Poisson structure on $\mathcal{N}_{H}(r,d)$ \cite[Theorem 4.5.1]{Bo}
(see also \cite{H}, \cite{La}); we will take $s$ to be the section given
by the constant function $1$. The moduli space $\mathcal{N}_{H}(r,d)$ is also investigated in \cite{BGL}. The construction of the Poisson structure
is recalled in the proof of Theorem \ref{thm poisson map}.

Let
\begin{equation}\label{w1}
\widetilde{\Psi}'\, :\, T^*\mathcal{N}_{H}(r,d)\, \longrightarrow\, T\mathcal{N}_{H}(r,d)
\end{equation}
be this Poisson structure on $\mathcal{N}_{H}(r,d)$. Let
\begin{equation}\label{w2}
\widetilde{\Psi}\, :\, T^*\mathcal{M}_{H}(r,d)\, \stackrel{\sim}{\longrightarrow}\, T\mathcal{M}_{H}(r,d)
\end{equation}
be the Poisson structure on $\mathcal{M}_{H}(r,d)$ given by the symplectic form $\Psi$ in Theorem \ref{thm1}. 

Let $\mathcal{M}^s_{H}(r,d)\, \subset\, \mathcal{M}_{H}(r,d)$ be the stable locus; it is a Zariski
open dense subset. Let
\begin{equation}\label{eq Q}
Q\, :\, \mathcal{M}^s_{H}(r,d)\, \longrightarrow\, \mathcal{N}_{H}(r,d)\, , \ \ (E,\, \delta, \, \theta)\,\longmapsto\,
(E,\, \theta)
\end{equation}
be the forgetful map that simply forgets the framing.

\begin{theorem}\label{thm poisson map}
The map $Q$ in \eqref{eq Q} is compatible with the Poisson structures $\widetilde{\Psi}'$ and
$\widetilde{\Psi}$ on $\mathcal{N}_{H}(r,d)$ and $\mathcal{M}^s_{H}(r,d)$ respectively.
\end{theorem}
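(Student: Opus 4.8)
The plan is to reduce the compatibility condition \eqref{pch} to a single identity of morphisms of two-term complexes and then verify that identity degreewise. First I would record the deformation complex governing $\mathcal{N}_H(r,d)$: the tangent space to $\mathcal{N}_H(r,d)$ at $(E,\theta)$ is $\mathbb{H}^1(\widetilde{\mathcal{C}}_\bullet)$, where
$$
\widetilde{\mathcal{C}}_\bullet\,:\, \text{End}(E) \stackrel{f_\theta}{\longrightarrow} \text{End}(E)\otimes K_X(D)
$$
is the analogue of \eqref{e2} with the untwisted $\text{End}(E)$ in degree zero. The inclusion $\text{End}(E)(-D)\hookrightarrow \text{End}(E)$, which is multiplication by the section $s$, together with the identity in degree one, defines a morphism of complexes $\jmath\,:\,\mathcal{C}_\bullet \to \widetilde{\mathcal{C}}_\bullet$. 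Since $Q$ in \eqref{eq Q} forgets the framing, and since by Lemma \ref{lem1} the framed deformations of $E$ are exactly those valued in $\text{End}(E)(-D)$ while the unframed ones are valued in $\text{End}(E)$, the differential $dQ$ is precisely $\mathbb{H}^1(\jmath)$.

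Next I would recast both Poisson bivectors as maps induced by morphisms of complexes. By Corollary \ref{cor tangent vs def space} and Lemma \ref{lm self dual complex}, after identifying $T^*\mathcal{M}_H(r,d)\cong \mathbb{H}^1(\mathcal{C}^\bullet)$ via Serre duality, the bivector $\widetilde\Psi$ of \eqref{w2} is induced by the self-duality isomorphism $\Theta\,:\,\mathcal{C}^\bullet \xrightarrow{\sim}\mathcal{C}_\bullet$ of Lemma \ref{lm self dual complex}, which is the identity in both degrees. For $\mathcal{N}_H(r,d)$, after identifying $T^*\mathcal{N}_H(r,d)\cong \mathbb{H}^1(\widetilde{\mathcal{C}}^\bullet)$, where $\widetilde{\mathcal{C}}^\bullet\,:\,\text{End}(E)(-D)\to\text{End}(E)\otimes K_X$ is the Serre dual of $\widetilde{\mathcal{C}}_\bullet$, the bivector $\widetilde\Psi'$ of \eqref{w1} is induced by the morphism $\Sigma\,:\,\widetilde{\mathcal{C}}^\bullet\to\widetilde{\mathcal{C}}_\bullet$ given by multiplication by $s$ in \emph{both} degrees; this is exactly the section-dependent structure of \cite{Bo} whose construction I would recall at the outset. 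With these descriptions the criterion \eqref{pch} for $Q$ reduces to the single identity $dQ\circ\widetilde\Psi\circ(dQ)^* = \widetilde\Psi'$ of maps $T^*\mathcal{N}_H(r,d)\to T\mathcal{N}_H(r,d)$.

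The computation then runs as follows. Functoriality of Serre duality identifies the transpose $(dQ)^*=\mathbb{H}^1(\jmath)^*$ with $\mathbb{H}^1(\jmath^\vee)$, where $\jmath^\vee\,:\,\widetilde{\mathcal{C}}^\bullet\to\mathcal{C}^\bullet$ is the Serre dual morphism of $\jmath$; since dualizing swaps the two degrees, a direct check gives that $\jmath^\vee$ is the identity in degree zero and multiplication by $s$ in degree one. Hence $dQ\circ\widetilde\Psi\circ(dQ)^*$ is induced by the composite morphism of complexes $\jmath\circ\Theta\circ\jmath^\vee\,:\,\widetilde{\mathcal{C}}^\bullet\to\widetilde{\mathcal{C}}_\bullet$. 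Carrying out the composition degreewise, in degree zero it is $\mathrm{id}\cdot\mathrm{id}\cdot s = s$ and in degree one it is $s\cdot\mathrm{id}\cdot\mathrm{id}= s$, so $\jmath\circ\Theta\circ\jmath^\vee=\Sigma$. Therefore $dQ\circ\widetilde\Psi\circ(dQ)^*=\mathbb{H}^1(\Sigma)=\widetilde\Psi'$, which is exactly \eqref{pch}, and the theorem follows.

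The main obstacle I expect is the bookkeeping of the Serre-duality identifications. Concretely, one must check that $(dQ)^*$ genuinely corresponds to $\mathbb{H}^1(\jmath^\vee)$ under \eqref{mapB}–\eqref{sdis} and their analogues for $\mathcal{N}_H(r,d)$, and one must track the sign from $(f_\theta)^*=-f_\theta$ under the trace pairing so that $\jmath^\vee$ and $\Sigma$ really commute with the differentials and are bona fide morphisms of complexes. Once this naturality is pinned down, the degreewise identity $\jmath\circ\Theta\circ\jmath^\vee=\Sigma$ is immediate, and the compatibility of $Q$ with the Poisson structures $\widetilde\Psi'$ and $\widetilde\Psi$ follows at once from the criterion \eqref{pch}.
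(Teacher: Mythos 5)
Your proposal is correct and follows essentially the same route as the paper's own proof: your $\jmath$, $\jmath^\vee$, $\Theta$, and $\Sigma$ are precisely the paper's morphisms $\zeta$ in \eqref{zet0}, $\xi$ (whose induced map realizes $(dQ)^*$ via \eqref{sh2}), the self-duality isomorphism of Lemma \ref{lm self dual complex}, and $\varpi$ in \eqref{4c}, and the degreewise composition $\jmath\circ\Theta\circ\jmath^\vee=\Sigma$ is exactly how the paper identifies $dQ\circ\widetilde{\Psi}\circ(dQ)^*$ with $\widetilde{\Psi}'$ to verify \eqref{pch}. The only presentational difference is that you invoke functoriality of Serre duality to get $(dQ)^*=\mathbb{H}^1(\jmath^\vee)$, where the paper spells this out by first proving $dQ=\zeta_*$ and then dualizing; the substance is identical.
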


\begin{proof}
We will check the criterion in \eqref{pch}.
	
	Consider the complex of coherent sheaves on $X$
\begin{equation}\label{2cli}
	\mathcal{C}^{\bullet}_\tau\, :\, \mathcal{C}^{0}_\tau\,:=\,(\text{End}(E)\otimes K_X(D))^*\otimes K_X \,
	\stackrel{(f_\theta)^*\otimes\text{Id}_{K_X}}{\longrightarrow}\,
	\mathcal{C}^1_\tau\,:=\, \text{End}(E)^*\otimes K_X\, .
\end{equation}
The identity map of $(\text{End}(E)\otimes K_X(D))^*\otimes K_X$
and the natural inclusion of $\text{End}(E)^*\otimes K_X$ in
$\text{End}(E)^*\otimes K_X(D)$ (recall that the divisor $D$ is effective) together
produce a homomorphism of complexes
$$
	\xi\, :\, \mathcal{C}^{\bullet}_\tau\, \longrightarrow\, \mathcal{C}^{\bullet}
	$$
(the complex $\mathcal{C}^{\bullet}$ is constructed in \eqref{dc}); in other words, the commutative diagram
	$$
	\begin{matrix}
	(\text{End}(E)\otimes K_X(D))^*\otimes K_X & \stackrel{(f_\theta)^*\otimes\text{Id}_{K_X}}{\longrightarrow}&
	\text{End}(E)^*\otimes K_X\\
	\Vert && \Big\downarrow \\
	(\text{End}(E)\otimes K_X(D))^*\otimes K_X & \stackrel{(f_\theta)^*\otimes
		\text{Id}_{K_X}}{\longrightarrow}& \text{End}(E)(-D)^*\otimes K_X= \text{End}(E)^*\otimes K_X(D)
	\end{matrix}
	$$
defines $\xi$. Let
	\begin{equation}\label{xie}
	\xi_*\, :\, {\mathbb H}^1(\mathcal{C}^{\bullet}_\tau)\, \longrightarrow\,{\mathbb H}^1(\mathcal{C}^{\bullet})
	\end{equation}
	be the homomorphism of hypercohomologies induced by the above homomorphism $\xi$ of complexes.
	
We note that $T^*_{Q(E, \delta, \theta)}\mathcal{N}_{H}(r,d)\,=\, T^*_{(E,\theta)}\mathcal{N}_{H}(r,d)
\,=\, {\mathbb H}^1(\mathcal{C}^{\bullet}_\tau)$ \cite[Proposition 3.1.10]{Bo},
\cite[Corollary 7.5]{Ma}, \cite[Theorem 2.3]{BR}. On the other hand,
$$
{\mathbb H}^1(\mathcal{C}^{\bullet})\,=\, {\mathbb H}^1(\mathcal{C}_{\bullet})^*\,=\,
T^*_{(E, \delta, \theta)}\mathcal{M}^s_{H}(r,d)
$$
(Corollary \ref{cor tangent vs def space} and \eqref{sdis}). Take any
$$
w\, \in\, {\mathbb H}^1(\mathcal{C}^{\bullet}_\tau)\,=\,
T^*_{Q(E, \delta, \theta)}\mathcal{N}_{H}(r,d)\, .
$$

We will show that 
\begin{equation}\label{sh2}
(dQ)^*(E, \delta, \theta)(w)\,=\, \xi_*(w)\, ,
\end{equation}
where $\xi_*$ is constructed in \eqref{xie}.

To prove \eqref{sh2}, consider the complex of coherent sheaves on $X$
\begin{equation}\label{3cli}
	{\mathcal C}^\tau_{\bullet}\, :\,
	{\mathcal C}^\tau_{0}\,=\, \text{End}(E) \, \stackrel{f_\theta}{\longrightarrow}\,
	{\mathcal C}^\tau_{1}\,=\, \text{End}(E)\otimes K_X(D)\, .
\end{equation}
	We have
	$$
	T_{Q(E, \delta, \theta)}\mathcal{N}_{H}(r,d)\,=\, T_{(E, \theta)}\mathcal{N}_{H}(r,d)\,=\,
{\mathbb H}^1({\mathcal C}^\tau_{\bullet})
	$$
\cite{Bo}, \cite{Ma}, \cite{BR}. Next we note that the identity map of
$\text{End}(E)\otimes K_X(D)$ and the natural inclusion
of $\text{End}(E)(-D)$ in $\text{End}(E)$ together produce a
homomorphism of complexes
\begin{equation}\label{zet0}
\zeta\, :\, \mathcal{C}_{\bullet}\, \longrightarrow\, \mathcal{C}^\tau_{\bullet}
\end{equation}
($\mathcal{C}_{\bullet}$ is constructed in \eqref{e2}); in other words, we have the commutative diagram
$$
\begin{matrix}
	\text{End}(E)(-D) & \stackrel{f_\theta}{\longrightarrow}&
	\text{End}(E)\otimes K_X(D)\\
	\Big\downarrow && \Vert \\
	\text{End}(E) & \stackrel{f_\theta}{\longrightarrow}& \text{End}(E)\otimes K_X(D)
	\end{matrix}
	$$
that defines $\zeta$. Let
\begin{equation}\label{zet}
\zeta_*\, :\, {\mathbb H}^1(\mathcal{C}_{\bullet})\, \longrightarrow\,{\mathbb H}^1(\mathcal{C}^\tau_{\bullet})
\end{equation}
be the homomorphism of hypercohomologies induced by the above homomorphism $\zeta$
of complexes. It can be shown that in terms of the identifications
$$
T_{Q(E, \delta, \theta)}\mathcal{N}_{H}(r,d)\,=\, {\mathbb H}^1({\mathcal C}^\tau_{\bullet})\ \
\text{ and } \ \, T_{(E, \delta, \theta)}\mathcal{M}^s_{H}(r,d)\,=\, {\mathbb H}^1({\mathcal C}_{\bullet})\, ,$$
the differential $$dQ(E, \delta, \theta)\, :\, T_{(E, \delta, \theta)}\mathcal{M}^s_{H}(r,d)\, \longrightarrow\,
T_{Q(E, \delta, \theta)}\mathcal{N}_{H}(r,d)$$
of $dQ$ at $(E, \,\delta,\, \theta)\, \in\, \mathcal{M}^s_{H}(r,d)$ coincides with the homomorphism
$\zeta_*$ constructed in \eqref{zet}. Indeed, this follows immediately from the constructions of the map $Q$
and the homomorphism $\zeta_*$. Finally, \eqref{sh2} follows from the isomorphism in \eqref{sdis} and the definition of
the homomorphism $(dQ)^*(E, \delta, \theta)$ as the dual homomorphism.
	
At this point, we shall recall the construction of the Poisson structure $\widetilde{\Psi}'$ in
\eqref{w1}. For this, consider the complexes ${\mathcal C}^{\bullet}_\tau$ and ${\mathcal C}^\tau_{\bullet}$
constructed in \eqref{2cli} and \eqref{3cli} respectively. We have a homomorphism of complexes
\begin{equation}\label{4c}
\varpi\, :\, {\mathcal C}^{\bullet}_\tau\, \longrightarrow \,{\mathcal C}^\tau_{\bullet}
\end{equation}
defined by the following diagram of homomorphisms:
$$
\begin{matrix}
(\text{End}(E)\otimes K_X(D))^*\otimes K_X = \text{End}(E)(-D) & \stackrel{(f_\theta)^*\otimes\text{Id}_{K_X}}{\longrightarrow}&
\text{End}(E)^*\otimes K_X\\
\Big\downarrow && \Big\downarrow \\
\text{End}(E) & \stackrel{f_\theta}{\longrightarrow}& \text{End}(E)\otimes K_X(D)
\end{matrix}
$$
note that here it is used that
\begin{itemize}
\item $\text{End}(E)^*\,=\, \text{End}(E)$, and

\item the sheaves ${\mathcal O}_X(-D)$ and ${\mathcal O}_X$ are
contained in ${\mathcal O}_X$ and ${\mathcal O}_X(D)$ respectively.
\end{itemize}
Let
\begin{equation}\label{vps}
\varpi_*\, :\, {\mathbb H}^1({\mathcal C}^{\bullet}_\tau)\, \longrightarrow\,
{\mathbb H}^1({\mathcal C}^\tau_{\bullet})
\end{equation}
be the homomorphism of hypercohomologies induced by the
homomorphism of complexes $\varpi$ in \eqref{4c}. Since
$$
{\mathbb H}^1(\mathcal{C}^{\bullet}_\tau)\,=\, T^*_{(E,\theta)}\mathcal{N}_{H}(r,d)\ \ \text{ and }\ \
{\mathbb H}^1({\mathcal C}^\tau_{\bullet})\,=\, T_{(E,\theta)}\mathcal{N}_{H}(r,d)\, ,
$$
the above homomorphism $\varpi_*$ produces a homomorphism
$$
T^*_{(E,\theta)}\mathcal{N}_{H}(r,d)\, \longrightarrow\,T_{(E,\theta)}\mathcal{N}_{H}(r,d)\, .
$$
This homomorphism coincides with $\widetilde{\Psi}'(E,\theta)$ in \eqref{w1}.

Finally, consider the isomorphism $\widetilde{\Psi}$ in \eqref{w2}. We note
that $\widetilde{\Psi}$ coincides with the homomorphism of hypercohomologies
induced by the isomorphism of complexes in Lemma \ref{lm self dual complex}.
Using this and \eqref{mapBl} it follows that the composition
$$
\widetilde{\Psi}\circ (dQ)^*(E, \delta, \theta)\, :\, 
T^*_{Q(E, \delta, \theta)}\mathcal{N}_{H}(r,d) \,=\, {\mathbb H}^1(\mathcal{C}^{\bullet}_\tau)\,\longrightarrow\,
{\mathbb H}^1({\mathcal C}_{\bullet})\,=\,T_{(E, \delta, \theta)}\mathcal{M}^s_{H}(r,d)
$$
coincides with the homomorphism of hypercohomologies associated to the following
natural homomorphism of complexes:
$$
\begin{matrix}
(\text{End}(E)\otimes K_X(D))^*\otimes K_X & \stackrel{(f_\theta)^*\otimes\text{Id}_{K_X}}{\longrightarrow}&
\text{End}(E)^*\otimes K_X\\
\Vert && \Big\downarrow \\
\text{End}(E)(-D) & \stackrel{f_\theta}{\longrightarrow}& \text{End}(E)\otimes K_X(D)
\end{matrix}
$$
(as before, we use that $\text{End}(E)^*\,=\, \text{End}(E)$ and that ${\mathcal O}_X$ is
contained in ${\mathcal O}_X(D)$). From the above description of
$\widetilde{\Psi}\circ (dQ)^*(E, \delta, \theta)$, and the earlier observation that
$dQ(E, \delta, \theta)$ coincides with the homomorphism $\zeta_*$
in \eqref{zet}, it follows that the composition
\begin{equation}\label{w4}
(dQ)\circ\widetilde{\Psi}\circ (dQ)^*(E, \delta, \theta)\, :\, 
T^*_{Q(E, \delta, \theta)}\mathcal{N}_{H}(r,d) \,=\, {\mathbb H}^1(\mathcal{C}^{\bullet}_\tau)\,\longrightarrow\,
{\mathbb H}^1({\mathcal C}^\tau_{\bullet})\,=\,T_{Q(E, \delta, \theta)}\mathcal{N}_{H}(r,d)
\end{equation}
coincides with the homomorphism $\varpi_*$ in \eqref{vps} of hypercohomologies associated to the homomorphism $\varpi$ of
complexes in \eqref{4c}. Consequently, the homomorphism in \eqref{w4} coincides with
$\widetilde{\Psi}'$ in \eqref{w1}. This completes the proof.
\end{proof}
We end this section by studying the structure of the map $Q$.

\begin{proposition}\label{prop Q is torsor}
Assume that the divisor $D$ is reduced, thus $n_i\,=\, 1$ for all
$1\,\leq\, i\,\leq\, s$ and $n=s$. Consider the diagonal embedding of the center
$\mathbb{C}^\times$ of $\mathrm{GL}(r,\mathbb{C})$ in $\mathrm{GL}(r,\mathbb{C})^n$ 
The morphism $Q$ turns $\mathcal{M}^s_{H}(r,d)$ into a $\mathrm{GL}(r,\mathbb{C})^n/
\mathbb{C}^\times$-torsor over $\mathcal{N}_{H}^s(r,d)$.
\end{proposition}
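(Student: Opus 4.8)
The plan is to realize $Q$ as a principal bundle for the group $G\,:=\,\mathrm{GL}(r,\mathbb{C})^n/\mathbb{C}^\times$, by first computing its fibres set-theoretically and then promoting this to an algebraic torsor structure. Since $\mathcal{M}_H(r,d)$ is a fine moduli space, there is a universal framed Higgs bundle on $\mathcal{M}^s_H(r,d)\times X$, and changing a frame defines an algebraic action of $\mathrm{GL}(r,\mathbb{C})^n$ on $\mathcal{M}^s_H(r,d)$ that leaves $(E,\,\theta)$, hence $Q$, invariant. First I would check that this action factors through $G$ and is free, so that $Q$ becomes a candidate $G$-torsor, and then verify the torsor axioms.

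For the fibre computation, fix $(E,\,\theta)$ in the image of $Q$. Because the stability condition for a framed Higgs bundle involves only $\theta$-invariant subsheaves of $E$, a framed Higgs bundle is stable exactly when $(E,\,\theta)$ is; thus $Q$ maps onto the stable locus $\mathcal{N}^s_H(r,d)$. As $D\,=\,\sum_{i=1}^n x_i$ is reduced, one has $\mathcal{O}_D\,=\,\bigoplus_{i=1}^n\mathbb{C}_{x_i}$ and $E_D\,=\,\bigoplus_{i=1}^n E_{x_i}$, so a frame is an $n$-tuple $(\delta_1,\dots,\delta_n)$ of linear isomorphisms $\delta_i\,:\,E_{x_i}\,\xrightarrow{\sim}\,\mathbb{C}^r$, on which $\mathrm{GL}(r,\mathbb{C})^n$ acts simply transitively by $\delta_i\,\mapsto\,g_i\circ\delta_i$. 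Two frames $\delta,\delta'$ define isomorphic framed Higgs bundles if and only if they differ by an automorphism of the Higgs bundle $(E,\,\theta)$. Since $(E,\,\theta)$ is stable it is simple, so every such automorphism is a scalar $\lambda\cdot\mathrm{Id}_E$; this scales every $\delta_i$ by the same factor, i.e. acts through the diagonal $\mathbb{C}^\times\,\hookrightarrow\,\mathrm{GL}(r,\mathbb{C})^n$. Hence the $\mathrm{GL}(r,\mathbb{C})^n$-action descends to $G$ and the fibre $Q^{-1}(E,\,\theta)$ is a single free $G$-orbit, that is, a $G$-torsor. Freeness over all of $\mathcal{M}^s_H(r,d)$ is immediate from Corollary \ref{cor0}, since a nontrivial stabiliser would be a nontrivial automorphism of a framed Higgs bundle.

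The main obstacle is to promote this pointwise picture to an algebraic $G$-torsor. I would first show that $Q$ is smooth. By the proof of Theorem \ref{thm poisson map}, the differential $dQ$ at $(E,\,\delta,\,\theta)$ is the map $\zeta_*$ of \eqref{zet} induced by the inclusion $\mathrm{End}(E)(-D)\,\hookrightarrow\,\mathrm{End}(E)$; the quotient of complexes $\mathcal{C}^\tau_{\bullet}/\mathcal{C}_{\bullet}$ is the torsion sheaf $\mathrm{End}(E)_D\,=\,\bigoplus_{i=1}^n\mathrm{End}(E_{x_i})$ placed in degree $0$, and since $H^1(X,\,\mathrm{End}(E)_D)\,=\,0$, the associated long exact sequence, together with \eqref{hcv} and $\mathbb{H}^0(\mathcal{C}^\tau_{\bullet})\,=\,\mathbb{C}$, shows that $\zeta_*$ is surjective with kernel $H^0(X,\,\mathrm{End}(E)_D)/\mathbb{C}\,\cong\,\mathrm{Lie}(G)$. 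Hence $Q$ is a smooth surjection of relative dimension $\dim G\,=\,nr^2-1$, carrying a free algebraic action of the affine group $G$ whose geometric fibres are single orbits; a standard descent argument then identifies such a map with an fppf (hence étale) $G$-torsor over $\mathcal{N}^s_H(r,d)$. Conceptually this is the statement that the relative frame bundle $\prod_{i=1}^n\mathrm{Isom}(\mathcal{E}\vert_{\mathcal{N}^s_H(r,d)\times\{x_i\}},\,\mathcal{O}^{\oplus r})$ of a universal Higgs bundle $\mathcal{E}$ on $\mathcal{N}^s_H(r,d)\times X$, which is a $\mathrm{GL}(r,\mathbb{C})^n$-torsor twisted by the $\mathbb{C}^\times$-gerbe of scalar automorphisms, becomes untwisted after quotienting by the diagonal $\mathbb{C}^\times$ along which that gerbe is banded, and that this quotient is canonically $\mathcal{M}^s_H(r,d)$. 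The genuinely delicate point is precisely this last identification, the gerbe bookkeeping, whereas freeness and the fibrewise description follow formally from Corollary \ref{cor0} and the simplicity of stable Higgs bundles.
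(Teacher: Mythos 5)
Your proposal is correct and follows essentially the same route as the paper: the fibrewise description of frames over the reduced divisor $D$ with $\mathrm{GL}(r,\mathbb{C})^n$ acting transitively, the descent of the action to $\mathrm{GL}(r,\mathbb{C})^n/\mathbb{C}^\times$ and its freeness via simplicity of stable Higgs pairs (Corollary \ref{cor0}), and the submersion property of $Q$ obtained from surjectivity of $\zeta_*$ in \eqref{zet} using that the quotient of $\zeta$ in \eqref{zet0} is a torsion sheaf in degree zero. Your added refinements --- identifying $\ker(\zeta_*)$ with $H^0(X,\mathrm{End}(E)_D)/\mathbb{C}\cong\mathrm{Lie}(G)$ and making the passage from the pointwise picture to an algebraic torsor explicit via descent --- are correct elaborations of steps the paper leaves implicit, not a different argument.
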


\begin{proof}
Take any $(E,\, \theta)\,\in\,\mathcal{N}_H(r,d)$.
The group $$\text{Aut}({\mathcal O}^{\oplus r}_D)\,\stackrel{\sim}{\longrightarrow}\,
\prod_{i=1}^n \text{Aut}({\mathcal O}^{\oplus r}_{x_i})\,=\, \text{GL}(r,
{\mathbb C})^n$$ (the above isomorphism holds because $D$ is reduced)
acts on the space of framings on $E$ (note that $n\,=\, s$); see Definition
\ref{def1}. This action factors through the quotient group $\mathrm{GL}(r,\mathbb{C})^n/
\mathbb{C}^\times$ because scalars act as automorphisms of $(E,\, \theta)$.
So $\mathrm{GL}(r,\mathbb{C})^n/\mathbb{C}^\times$ acts on the fiber of $Q$ over
the point $(E,\, \theta)\,\in\,\mathcal{N}_H(r,d)$; this action is evidently transitive.
Since $(E,\, \theta)$ is stable, it is simple, hence the above action of
$\mathrm{GL}(r,\mathbb{C})^n/\mathbb{C}^\times$ on the fiber of $Q$ over
$(E,\, \theta)$ is free.

In the proof of Theorem \ref{thm poisson map} it was observed that the
differential $dQ(E,\delta,\theta)$ coincides with the homomorphism $\zeta_*$
in \eqref{zet}. Note that $\zeta$ is injective, and the quotient of the homomorphism
$\zeta$ in \eqref{zet0}
is a torsion sheaf at the $0$-th position. As the first cohomology of a torsion sheaf
on $X$ vanishes, the first hypercohomology of the quotient complex vanishes.
Therefore, from the corresponding long exact
sequence of hypercohomologies associated to $\zeta$
it follows that the homomorphism $\zeta_*$ is surjective.
Consequently, the map $Q$ is a submersion. This completes the proof.
\end{proof}

\section{The Hitchin integrable system}

Recall from the previous section that, for any holomorphic symplectic manifold $(M,\, \omega)$, we denote by $\{\cdot,\cdot\}$ the associated Poisson bracket on $\mathcal{O}_{M}$. 

Two functions $f,\,g\in \mathcal{O}_{M}$ are said to Poisson commute if
$$
\{f,\, g\}\,=\,0\, .
$$
An algebraically completely integrable system on $M$ consists
of functions $f_1,\, \ldots ,\,f_d\,\in\, \mathcal{O}_{M}$ with $d\,=\, \frac{1}{2}\dim M$,
such that
\begin{itemize}
\item $\{f_i,\, f_j\}\,=\,0$ for all $1\, \leq\, i,\, j\, \leq\, d$,

\item the corresponding Hamiltonian vector fields $X_{f_1},\, \cdots, \, X_{f_d}$ are linearly independent
at the general point, and

\item the general fiber of the map $(f_1,\cdots,\, f_d) \,:\, M\,\longrightarrow\,
\mathbb{C}^d$ is an open subset of an abelian variety
such that the vector fields $X_{f_1},\, \cdots, \, X_{f_d}$ are linear on it.
\end{itemize}

When the number of the Poisson commuting functions satisfying the above
three conditions is less than half the 
dimension of $M$ we call it a complex partially integrable system.

\subsection{The Hitchin fibration}
We assume that $\text{genus}(X)+n\,=\, \text{genus}(X)+\sum_{i=1}^s n_i\, \geq\, 2$.

The Hitchin fibration for the moduli space of framed Higgs bundles is defined to be
the map 
\begin{equation}\label{eq Hitchin map}
h\,:\,\mathcal{M}_{H}(r,d)\,\longrightarrow\, \mathcal{H}\,:=\, \bigoplus_{i=1}^{r}H^{0}(X,
\,K^{i}_X(iD))
\end{equation}
given by the characteristic polynomial of the Higgs field, that is 
$$
h(E,\delta,\theta)(x)\,=\, \det(k 1_{E|_{x}}-\theta|_{x})=k^{r}+a_1(\theta)(x)k^{r-1}+\cdots +a_{r}(\theta)(x)
$$ 
where $k\,\in\, K_X(D)|_{x}$ and $a_{1},\,\ldots ,\, a_{r}$ are a conjugation invariant basis of
homogeneous polynomial functions on $\mathfrak{gl}(r)$. Equivalently, they are elementary symmetric polynomials on the Cartan subalgebra $\mathbb{C}^r$.

Since $H^{1}(X,\, K^{i}_X(iD))\,=\, 0$,
 for all nonnegative $i$ (recall that $\text{genus}(X)+n\, \geq\, 2$) the dimension of the vector space $\mathcal{H}$ is 
\begin{equation}\label{dimH}
N\,:=\,\dim \mathcal{H}\,=\,r^{2}(g_X-1)+\frac{r(r+1)}{2}n\, ,
\end{equation}
by the Riemann-Roch theorem. Hence $h\,=\,(h_1,\,\cdots,\,h_N)$, where $h_i$ is a
polynomial function of degree $i$.

\begin{theorem} \label{Poisson-commute} 
The above functions $h_{1}, \cdots,\, h_{N}$ Poisson commute. 
\end{theorem}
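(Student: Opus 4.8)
The plan is to compute the bracket directly from the symplectic form $\Psi$ of Theorem \ref{thm1}. Since $\mathcal{M}_H(r,d)$ is symplectic, $\{h_i,\,h_j\}$ equals $\Psi(X_{h_i},\,X_{h_j})\,=\,dh_i(X_{h_j})$, so it suffices to understand the differentials $dh_i$ and the Hamiltonian vector fields $X_{h_i}$ in terms of the hypercohomological model $T_{(E,\delta,\theta)}\mathcal{M}_H(r,d)\,=\,\mathbb{H}^1(\mathcal{C}_\bullet)$ of Corollary \ref{cor tangent vs def space}. Throughout I will use the exact sequence \eqref{e3}, in which the map $a$ records the ``Higgs direction'' $H^0(X,\,\text{End}(E)\otimes K_X(D))$ and the map $b$ records the ``bundle-plus-framing direction'' $H^1(X,\,\text{End}(E)(-D))$; representing a class $v\,\in\,\mathbb{H}^1(\mathcal{C}_\bullet)$ by \v{C}ech data $(\eta_{jk},\,\gamma_j)$ as in the proof of Lemma \ref{lem2}, the $\gamma_j$ encode the infinitesimal change of $\theta$ and the $\eta_{jk}$ the infinitesimal change of $(E,\,\delta)$. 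Because every $\{h_i,\,h_j\}$ is holomorphic and $T^*\mathcal{M}(r,d)$ is Zariski dense in $\mathcal{M}_H(r,d)$ (Lemma \ref{cor cotangent zar dense}), it is enough to verify the vanishing on that open subset.

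First I would compute $dh_i$. Each $h_i$ is, via Serre duality, the pairing of a fixed class $\psi_i\,\in\, H^1(X,\,K_X^{1-m}(-mD))\,=\,H^0(X,\,K_X^m(mD))^*$ with the characteristic coefficient $a_m(\theta)\,\in\, H^0(X,\,K_X^m(mD))$ appearing in \eqref{eq Hitchin map}. The key point is that $a_m$ is invariant under conjugation, so an infinitesimal change of $(E,\,\delta)$ (an infinitesimal gauge transformation, encoded by $\eta_{jk}$) does not alter $a_m(\theta)$; only the Higgs directions contribute. Concretely, writing $\nabla a_m(\theta)\,\in\, H^0(X,\,\text{End}(E)\otimes(K_X(D))^{m-1})$ for the gradient of the invariant polynomial $a_m$ at $\theta$, the variation of $a_m(\theta)$ along $v\,=\,(\eta_{jk},\,\gamma_j)$ is the global section obtained by gluing the local expressions $\text{tr}(\nabla a_m(\theta)\circ\gamma_j)$, and hence
\[
dh_i(v)\,=\,\langle\psi_i,\ \text{tr}(\nabla a_m(\theta)\circ\gamma_\bullet)\rangle .
\]
In particular $dh_i$ vanishes on every class that admits a representative with $\gamma_j\,=\,0$, that is, one whose Higgs component is zero.

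Next I would identify $X_{h_i}$. The decisive structural fact is that the gradient of an invariant polynomial lies in the centralizer of its argument, i.e.\ $f_\theta(\nabla a_m(\theta))\,=\,[\theta,\,\nabla a_m(\theta)]\,=\,0$. Because $\Psi$ restricts to the Liouville form on the dense cotangent bundle $T^*\mathcal{M}(r,d)$ (Proposition \ref{prop1}) and $h_i$ is a function of the cotangent ``fibre coordinate'' $\theta$ alone, its Hamiltonian vector field points purely in the base, i.e.\ bundle-plus-framing, directions; explicitly $b(X_{h_i})\,=\,\pm\,\nabla a_m(\theta)\cup\psi_i\,\in\, H^1(X,\,\text{End}(E)(-D))$, and the commutation relation $[\theta,\,\nabla a_m(\theta)]\,=\,0$ is precisely what permits this class to be lifted to a class in $\mathbb{H}^1(\mathcal{C}_\bullet)$ with \emph{vanishing} Higgs component. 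Feeding such an $X_{h_i}$ into the formula for $dh_j$ from the previous step gives $dh_j(X_{h_i})\,=\,\langle\psi_j,\,\text{tr}(\nabla a_{m'}(\theta)\circ 0)\rangle\,=\,0$, whence $\{h_i,\,h_j\}\,=\,0$ on $T^*\mathcal{M}(r,d)$ and, by density together with holomorphicity, on all of $\mathcal{M}_H(r,d)$.

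The step I expect to be the main obstacle is the third one: making precise, purely within the model $\mathbb{H}^1(\mathcal{C}_\bullet)$, the assertion that $X_{h_i}$ is a pure bundle-plus-framing deformation with vanishing Higgs component, and that the lift of $\nabla a_m(\theta)\cup\psi_i$ is governed by $[\theta,\,\nabla a_m(\theta)]\,=\,0$. A cleaner route that sidesteps this computation is to invoke Theorem \ref{thm poisson map}: on the stable locus the Hitchin map factors as $h\,=\,\widetilde{h}\circ Q$ through the forgetful map $Q$ of \eqref{eq Q}, so the Poisson compatibility of $Q$ yields $\{h_i,\,h_j\}\,=\,\{\widetilde{h}_i,\,\widetilde{h}_j\}\circ Q$, reducing the claim to the known integrability of the Hitchin system on $\mathcal{N}_{H}(r,d)$ (\cite{H}, \cite{Bo}, \cite{Ma}); the resulting vanishing on the dense stable locus then propagates to all of $\mathcal{M}_H(r,d)$ by holomorphicity.
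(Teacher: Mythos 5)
Your proposal is correct, and its primary route is essentially the paper's own proof. The paper disposes of the theorem in one line --- since the frame plays no role in local arguments, the computation of \cite[Proposition 4.7.1]{Bo} carries over --- and your \v{C}ech argument is precisely that computation written out in the framed setting: $dh_j$ annihilates hypercohomology classes admitting a representative $(\eta_{jk},\,0)$ (gauge invariance of the characteristic coefficients), while $[\theta,\,\nabla a_m(\theta)]\,=\,0$ guarantees both that $(\psi_i)_{jk}\,\nabla a_m(\theta)$ is a cocycle with values in $\mathrm{End}(E)(-D)$ killed by $f_\theta$, so that $\bigl(\nabla a_m(\theta)\cup\psi_i,\,0\bigr)$ defines a class in $\mathbb{H}^1(\mathcal{C}_\bullet)$, and that this class is $X_{h_i}$; the frame enters only through the harmless twist by $\mathcal{O}_X(-D)$, which is the paper's point. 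Your fallback route through Theorem \ref{thm poisson map} is also valid and is in fact recorded in the paper as the Remark following Corollary \ref{corB}: compatibility of $Q$ with the Poisson structures gives $\{h_i,\,h_j\}\,=\,\{\widetilde{h}_i,\,\widetilde{h}_j\}\circ Q$ on the stable locus, Bottacin's commutation on $\mathcal{N}_H(r,d)$ makes this vanish, and density of $\mathcal{M}^s_H(r,d)$ together with holomorphicity extends the vanishing to all of $\mathcal{M}_H(r,d)$. The one logical caution on that second route is ordering: the paper deduces Corollary \ref{corB} \emph{from} Theorem \ref{Poisson-commute}, so the commutation on $\mathcal{N}_H(r,d)$ must be quoted from the literature rather than from that corollary; since you cite \cite{H}, \cite{Bo}, \cite{Ma} directly, there is no circularity. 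The trade-off is the expected one: the direct computation is self-contained and keeps Theorem \ref{Poisson-commute} logically prior (so that Bottacin's result is recovered as a corollary), whereas the $Q$-route is shorter but leans on the heavier Theorem \ref{thm poisson map} and on the external integrability result.
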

\begin{proof}
Notice that the frame does not play any role in local arguments, therefore the proof follows analogously as in \cite[Proposition 4.7.1]{Bo}. 
\end{proof}

As a corollary we recover the following result due to Bottacin \cite[Proposition 4.7.1]{Bo}.

\begin{corollary}\label{corB}
Let 
\begin{equation}\label{eq hitchin map N}
\widetilde{h}\,=\,(\widetilde{h}_1\,,\cdots,\,\widetilde{h}_N)\,:\,\mathcal{N}_H(r,d)
\,\longrightarrow\, \mathcal{H}
\end{equation}
be the Hitchin map, defined in \cite{H}, \cite{Bo}, \cite{Nitin}, which sends any pair $(E,\,\theta)$ to the
characteristic polynomial of $\theta$. Then the functions $\widetilde{h}_i$ Poisson commute.
\end{corollary}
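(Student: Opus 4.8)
The plan is to derive Corollary~\ref{corB} directly from Theorem~\ref{Poisson-commute} by exploiting the compatibility of the forgetful map $Q$ with the Poisson structures established in Theorem~\ref{thm poisson map}. The key observation is that the Hitchin map $h$ on $\mathcal{M}_H(r,d)$ and the Hitchin map $\widetilde{h}$ on $\mathcal{N}_H(r,d)$ fit into a commutative triangle: since $h$ is defined via the characteristic polynomial of $\theta$ and $Q(E,\,\delta,\,\theta)\,=\,(E,\,\theta)$ simply forgets the framing without altering $\theta$, we have
\begin{equation}\label{eq htilde Q}
\widetilde{h}\circ Q\,=\, h\vert_{\mathcal{M}^s_H(r,d)}\, ,
\end{equation}
and in particular $\widetilde{h}_i\circ Q\,=\, h_i$ on the stable locus for each $i$.

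Granting \eqref{eq htilde Q}, the argument is then formal. First I would note that $Q\,:\,\mathcal{M}^s_H(r,d)\,\longrightarrow\,\mathcal{N}_H(r,d)$ is a submersion, as was shown at the end of the proof of Proposition~\ref{prop Q is torsor}; hence $Q$ is dominant and its image is a Zariski open dense subset of $\mathcal{N}_H(r,d)$. Since $Q$ is compatible with the Poisson brackets by Theorem~\ref{thm poisson map}, for any two locally defined holomorphic functions $f,\,g$ on $\mathcal{N}_H(r,d)$ we have the identity $\{f,\,g\}_{\mathcal{N}}\circ Q \,=\, \{f\circ Q,\, g\circ Q\}_{\mathcal{M}}$. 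Applying this with $f\,=\,\widetilde{h}_i$ and $g\,=\,\widetilde{h}_j$ and using \eqref{eq htilde Q} gives
$$
\{\widetilde{h}_i,\,\widetilde{h}_j\}_{\mathcal{N}}\circ Q\,=\,
\{\widetilde{h}_i\circ Q,\,\widetilde{h}_j\circ Q\}_{\mathcal{M}}\,=\,\{h_i,\,h_j\}_{\mathcal{M}}\,=\,0\, ,
$$
where the last equality is exactly Theorem~\ref{Poisson-commute}. Thus $\{\widetilde{h}_i,\,\widetilde{h}_j\}_{\mathcal{N}}$ pulls back to the zero function under $Q$.

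Finally, because $Q$ is dominant onto $\mathcal{N}_H(r,d)$ (its image being dense), a holomorphic function on $\mathcal{N}_H(r,d)$ that vanishes after pullback by $Q$ must vanish identically on the image, hence on all of $\mathcal{N}_H(r,d)$ by density and continuity. Therefore $\{\widetilde{h}_i,\,\widetilde{h}_j\}_{\mathcal{N}}\,=\,0$ for all $i,\,j$, which is the assertion. The only point requiring care—and what I expect to be the main subtlety rather than a genuine obstacle—is making \eqref{eq htilde Q} and the density argument rigorous at the level of the Poisson bracket, namely that the compatibility identity of Theorem~\ref{thm poisson map} is an identity of holomorphic functions valid on the stable locus, so that vanishing of the pullback together with dominance of $Q$ genuinely forces vanishing on $\mathcal{N}_H(r,d)$; since the Poisson structure on $\mathcal{N}_H(r,d)$ is generically symplectic and everything is holomorphic, this causes no difficulty.
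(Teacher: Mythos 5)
Your proposal is correct and takes essentially the same route as the paper: the paper's own proof derives the corollary from Theorem~\ref{Poisson-commute} and Theorem~\ref{thm poisson map} via precisely the identity $\widetilde{h}\circ Q\,=\,h$ on the stable locus. You simply make explicit the final step the paper leaves implicit, namely that $Q$ is surjective (indeed a torsor, so in particular dominant), so vanishing of the pullback $\{\widetilde{h}_i,\,\widetilde{h}_j\}\circ Q$ forces $\{\widetilde{h}_i,\,\widetilde{h}_j\}\,=\,0$ on all of $\mathcal{N}_H(r,d)$.
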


\begin{proof}
This follows from Theorem \ref{Poisson-commute} and Theorem \ref{thm poisson map} 
after observing that
$$
\widetilde{h}(Q(E,\delta,\theta))\,=\,h(E,\delta,\theta)\, .
$$
for the forgetful map $Q$ in \eqref{eq Q}.
\end{proof}

\begin{remark}
Conversely, Corollary \ref{corB} and
Theorem \ref{thm poisson map} together imply Theorem \ref{Poisson-commute}.
\end{remark}

We next study the fibers of the map $h$. Henceforth, we will assume the divisor
$D$ to be reduced.

Let $|K_X(D)|$ denote the total space of the line bundle $K_X(D)$. This surface admits a
natural morphism
$$
\pi:|K_X(D)|\longrightarrow X.
$$
First, for each $b=(b_1,\dots,b_r)\in \mathcal{H}$, we define a divisor $X_b\subset |K_X(D)|$, called the spectral curve, as the vanishing locus of the section
$$
\lambda^r+\pi^*b_1\lambda^{r-1}+\dots+\pi^*b_r\,\in\, H^0(|K_X(D)|,\,\pi^*K_X(D)^{\otimes r})\, ,
$$
 where $\lambda\in H^0(|K_X(D)|,\,\pi^*K_X(D))$ is the tautological section.

\begin{proposition}\label{prop hitchin fibers}
The generic fiber of $h$, the Hitchin map for $\mathcal{M}_{H}(r,d)$, is a
$\mathrm{GL}(r,\mathbb{C})^{n}/\mathbb{C}^\times$ torsor over the Jacobian of the spectral curve $X_b$. More precisely, the torsor whose fiber over $L\in\mathrm{Jac}(X_b)$ is 
\begin{equation}
	{\rm Aut}((\pi_*L)_D)/\mathbb{C}^\times.
\end{equation}
\end{proposition}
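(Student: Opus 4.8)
The plan is to deduce the description of the Hitchin fibers of $h$ from the classical spectral picture for $\widetilde{h}$ together with the torsor structure of the forgetful map $Q$ furnished by Proposition \ref{prop Q is torsor}. First I would fix a generic point $b\,=\,(b_1,\dots,b_r)\,\in\,\mathcal{H}$ for which the spectral curve $X_b\,\subset\,|K_X(D)|$ is smooth and integral; since $\mathrm{genus}(X)+n\,\geq\,2$ the relevant linear system on $|K_X(D)|$ is sufficiently positive, so a Bertini-type argument makes the generic $X_b$ smooth. For such $b$ the spectral (Beauville--Narasimhan--Ramanan) correspondence, which holds verbatim in the $K_X(D)$-twisted setting (cf. \cite{H}, \cite{Bo}), identifies $\widetilde{h}^{-1}(b)\,\subset\,\mathcal{N}_H(r,d)$ with a torsor over $\mathrm{Jac}(X_b)$: a line bundle $L$ on $X_b$ gives the Hitchin pair $(\pi_*L,\,\theta_\lambda)$, where $\theta_\lambda$ is the pushforward of multiplication by the tautological section $\lambda$, and conversely; the degree of $L$ is pinned down by $d$ and the genus of $X_b$. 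Moreover $\pi_*L$ is stable for $L$ on an integral spectral curve, so the whole fiber $\widetilde{h}^{-1}(b)$ lies in the stable locus.

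Next I would pass to the framed side. Because the stability of a framed Higgs bundle $(E,\,\delta,\,\theta)$ depends only on the underlying pair $(E,\,\theta)$, every framing of a stable $(E,\,\theta)$ yields a stable object, and hence $h^{-1}(b)\,\subset\,\mathcal{M}^s_H(r,d)$ for generic $b$. Corollary \ref{corB} gives $\widetilde{h}\circ Q\,=\,h$ on the stable locus, so
$$
h^{-1}(b)\,=\,Q^{-1}\big(\widetilde{h}^{-1}(b)\big)\, .
$$
By Proposition \ref{prop Q is torsor}, $Q$ exhibits $\mathcal{M}^s_H(r,d)$ as a $\mathrm{GL}(r,\mathbb{C})^n/\mathbb{C}^\times$-torsor over $\mathcal{N}^s_H(r,d)$; restricting this torsor to the base $\widetilde{h}^{-1}(b)\,\cong\,\mathrm{Jac}(X_b)$ produces precisely a $\mathrm{GL}(r,\mathbb{C})^n/\mathbb{C}^\times$-torsor over $\mathrm{Jac}(X_b)$, which is the first assertion.

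For the refined description of the fiber over a point $L\,\in\,\mathrm{Jac}(X_b)$, I would argue intrinsically, following the proof of Proposition \ref{prop Q is torsor}. The point $L$ corresponds to the pair $(\pi_*L,\,\theta_\lambda)$, and $Q^{-1}(\pi_*L,\,\theta_\lambda)$ is the set of framings of $E\,=\,\pi_*L$ modulo the automorphisms of $(E,\,\theta_\lambda)$. Since this pair is stable, hence simple, its automorphisms are exactly the scalars $\mathbb{C}^\times$, which rescale framings. The space of framings is a torsor over $\mathrm{Aut}(E_D)\,=\,\mathrm{Aut}((\pi_*L)_D)$, so after dividing by $\mathbb{C}^\times$ the fiber is a torsor over $\mathrm{Aut}((\pi_*L)_D)/\mathbb{C}^\times$, as stated.

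The step requiring the most care is reconciling the two models of the structure group. Proposition \ref{prop Q is torsor} builds the global torsor on $\mathrm{Aut}(\mathcal{O}^{\oplus r}_D)\,=\,\mathrm{GL}(r,\mathbb{C})^n$ acting by postcomposition, while the fiberwise statement uses $\mathrm{Aut}((\pi_*L)_D)$ acting by precomposition. Any trivialization $(\pi_*L)_D\,\cong\,\mathcal{O}^{\oplus r}_D$ --- which exists because $D$ is reduced and $\pi_*L$ has rank $r$, so its restriction to each point $x_i$ is just an $r$-dimensional vector space --- identifies these groups, and one checks that the scalar subgroups being quotiented match, since in both descriptions $\mathbb{C}^\times$ is the diagonal center of $\mathrm{GL}(r,\mathbb{C})$ acting on all $n$ points simultaneously. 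This confirms that the fiber over $L$ is identified with $\mathrm{Aut}((\pi_*L)_D)/\mathbb{C}^\times$.
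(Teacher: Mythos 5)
Your proposal is correct and takes essentially the same route as the paper's proof: both rest on the compatibility $\widetilde{h}\circ Q \,=\, h$, the Beauville--Narasimhan--Ramanan identification of the generic fiber $\widetilde{h}^{-1}(b)$ with $\mathrm{Jac}(X_b)$, and then Proposition \ref{prop Q is torsor}. Your write-up merely fills in details the paper leaves implicit (genericity of smooth integral spectral curves, why the whole fiber lies in the stable locus, and the identification of $\mathrm{Aut}((\pi_*L)_D)/\mathbb{C}^\times$ with the structure group $\mathrm{GL}(r,\mathbb{C})^n/\mathbb{C}^\times$), all of which is sound.
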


\begin{proof}
Let $\widetilde{h}:\mathcal{N}_H(r,d)\longrightarrow\mathcal{H}$ be the Hitchin map for Higgs pairs. The forgetful map $Q$ defined in \eqref{eq Q} satisfies that 
$$
\widetilde{h}(Q(E,\delta,\theta))\,=\,h(E,\delta,\theta)\, .
$$
Thus $Q$ takes fibers to fibers and, generically, $\widetilde{h}^{-1}(b)$ is the
Jacobian of the spectral curve $X_b$ \cite[Proposition 3.6]{BNR}. The
proposition now follows from Proposition \ref{prop Q is torsor}.
\end{proof}

\subsection{An integrable subsystem}

There are two main problems that prevent $h$, the Hitchin map in \eqref{eq Hitchin map},  from being an algebraic 
completely integrable system:
\begin{enumerate}
\item the fibers are not abelian groups hence they are not abelian 
varieties (Proposition \ref{prop Q is torsor}); 
\item the dimension of fibers is too large as seen from Proposition \ref{dim} and the formula in (\ref{dimH}).
\end{enumerate}

In this section we define a smaller subsystem
$$
h:\mathcal{M}^\Delta_H(r,d)\longrightarrow\mathcal{H}
$$ 
whose fibers are semiabelian varieties, and whose dimension is twice the dimension of $\mathcal{H}$. The price for it is the loss of the symplectic structure, which becomes degenerate. 

In order to do this, consider the family of spectral curves parametrized by $\mathcal{H}$:
\begin{equation}
\mathcal{X}\longrightarrow\mathcal{H}.
\end{equation}
This is the subscheme of $|K_X(D)|\times\mathcal{H}$ consisting of points $(k,b)$ satisfying
$$
\lambda^r(k)+\pi^*b_1\lambda^{r-1}(k)+\dots+\pi^*b_r(k)=0.
$$
Using \cite[Proposition 3.6]{BNR}, it follows that the generic Hitchin fiber $\tilde{h}^{-1}(b)$ is isomorphic to $\mathrm{Jac}(X_b)$ the Jacobian of the spectral curve $X_{b}$. This isomorphism is called the spectral correspondence.

We can restate the above in terms of relative Jacobians.
 Consider the relative Jacobian $\mathrm{Jac}_{\mathcal{H}}(\mathcal{X})$, by which we mean stable relative line bundles. Stability is defined so that $\mathrm{Jac}_{\mathcal{H}}(\mathcal{X})$ parametrizes, via the spectral correspondence, stable Hitchin pairs. The spectral correspondence gives a morphism
\begin{equation}\label{eq spectral corr}
s\, :\, \mathrm{Jac}_{\mathcal{H}}(\mathcal{X})\longrightarrow \mathcal{N}_H(r,d).
\end{equation}

Let $\widetilde{D}\,:=\,\pi^*D\,\subset\,|K_X(D)|$ and consider relative framed line bundles on
 $\mathcal{X}$, that is, pairs $(L,\delta')$ for $L\in\mathrm{Jac}_{\mathcal{H}}(\mathcal{X})$, such that 
$$
L_b|_{\widetilde{D}_b}\cong \mathcal{O}_{\widetilde{D}_b}\, ,
$$
where $X_b:=\mathcal{X}|_{\{b\}}$, $L_b=L|_{X_b}$ and $\widetilde{D}_b\,=\,
\widetilde{D}\cap X_b$.
We will say that such a framed line bundle $(L,\delta')$ is stable if the underlying line
bundle is so, or in other words, if $L$ yields a stable framed Higgs bundle under the spectral correspondence.

\begin{proposition}\label{prop framed rel line bundles}
Let $\mathcal{H}_{nr}\subset {\mathcal{H}}$ denote the subset of the Hitchin base corresponding to smooth spectral curves which are unramified over $D$. Then, relative framed line bundles on $\mathcal{X}|_{\mathcal{H}_{nr}}$ are parametrized by a $(\mathbb{C}^\times)^{nr-1}$ torsor $\mathcal{P}$ over $\mathrm{Jac}_{\mathcal{H}}(\mathcal{X}|_{\mathcal{H}_{nr}})$.
\end{proposition}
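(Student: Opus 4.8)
The plan is to prove Proposition \ref{prop framed rel line bundles} by analyzing the fibers of the natural forgetful map that sends a framed relative line bundle $(L,\delta')$ to the underlying line bundle $L \in \mathrm{Jac}_{\mathcal{H}}(\mathcal{X}|_{\mathcal{H}_{nr}})$. First I would set up the framing datum pointwise: over a fixed $b \in \mathcal{H}_{nr}$, a framing of $L_b$ is an isomorphism $L_b|_{\widetilde{D}_b} \cong \mathcal{O}_{\widetilde{D}_b}$, so the set of framings is a torsor under $\mathrm{Aut}(\mathcal{O}_{\widetilde{D}_b}) = H^0(\widetilde{D}_b, \mathcal{O}^\times_{\widetilde{D}_b})$. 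The crucial reduction comes from the hypothesis that the spectral curve $X_b$ is \emph{unramified} over $D$: since $\pi$ restricted to $X_b$ is an $r$-sheeted cover unramified over the reduced divisor $D = \sum_{i=1}^s x_i$ (with $n=s$), the scheme $\widetilde{D}_b = X_b \cap \pi^{-1}(D)$ consists of $nr$ reduced points. Hence $\mathcal{O}_{\widetilde{D}_b} \cong \mathbb{C}^{nr}$ and $\mathrm{Aut}(\mathcal{O}_{\widetilde{D}_b}) \cong (\mathbb{C}^\times)^{nr}$.

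Next I would account for the automorphisms of $L_b$ itself, which act on the framings and must be quotiented out to get the moduli of framed objects. Since $L_b$ is a line bundle on the (smooth, connected) spectral curve $X_b$, its automorphism group is exactly the scalars $\mathbb{C}^\times$, acting diagonally on $\mathcal{O}_{\widetilde{D}_b} \cong \mathbb{C}^{nr}$ through the embedding $\mathbb{C}^\times \hookrightarrow (\mathbb{C}^\times)^{nr}$. Therefore the isomorphism classes of framings on a fixed $L_b$ form a torsor under $(\mathbb{C}^\times)^{nr}/\mathbb{C}^\times \cong (\mathbb{C}^\times)^{nr-1}$. This is the pointwise statement; it exhibits the fibers of the forgetful map $\mathcal{P} \to \mathrm{Jac}_{\mathcal{H}}(\mathcal{X}|_{\mathcal{H}_{nr}})$ as $(\mathbb{C}^\times)^{nr-1}$-orbits, which is exactly the torsor structure asserted.

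To upgrade this fiberwise description to a genuine $(\mathbb{C}^\times)^{nr-1}$-torsor in families, I would work with the relative objects over the base $\mathcal{H}_{nr}$. Let $\widetilde{D}_{\mathcal{H}_{nr}} = \widetilde{D} \cap \mathcal{X}|_{\mathcal{H}_{nr}}$ be the relative framing divisor; by the unramifiedness hypothesis this is finite \'etale of degree $nr$ over $\mathcal{H}_{nr}$, so the restriction $L|_{\widetilde{D}_{\mathcal{H}_{nr}}}$ of the Poincar\'e (universal) line bundle is a rank-one sheaf that is locally trivial in the relative sense. The space $\mathcal{P}$ is then the relative frame bundle associated to this datum: $\mathcal{P} = \underline{\mathrm{Isom}}_{\mathcal{H}_{nr}}\!\big(L|_{\widetilde{D}_{\mathcal{H}_{nr}}},\, \mathcal{O}_{\widetilde{D}_{\mathcal{H}_{nr}}}\big)$, modulo the relative scalar automorphisms $\mathbb{C}^\times$ of $L$. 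The relative automorphism sheaf of the framings is the pushforward $(\pi_{\widetilde{D}})_* \mathcal{O}^\times$, which since $\widetilde{D}_{\mathcal{H}_{nr}} \to \mathcal{H}_{nr}$ is finite \'etale of degree $nr$ is a sheaf of groups with $(\mathbb{C}^\times)^{nr}$ fibers; quotienting by the diagonal relative scalar $\mathbb{C}^\times$ gives the structure group $(\mathbb{C}^\times)^{nr-1}$. The existence of local sections (local triviality of the torsor) follows from the local freeness of $L|_{\widetilde{D}_{\mathcal{H}_{nr}}}$ over the \'etale locus.

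The main obstacle I expect is the relative bookkeeping of the scalar quotient: one must check that the diagonal $\mathbb{C}^\times$ (the relative automorphisms of $L$) sits compatibly inside the relative automorphism group $(\pi_{\widetilde{D}})_*\mathcal{O}^\times$ across all of $\mathcal{H}_{nr}$, and that the resulting quotient is a trivial $nr$-versus-$(nr-1)$ count uniformly in $b$ — this uses essentially that $X_b$ stays connected (so $\mathrm{Aut}(L_b) = \mathbb{C}^\times$ with no extra locally constant functions) for all $b \in \mathcal{H}_{nr}$, which is guaranteed by smoothness of the spectral curve. A secondary point requiring care is verifying that $\widetilde{D}_{\mathcal{H}_{nr}} \to \mathcal{H}_{nr}$ is genuinely \'etale of the expected degree $nr$: this is precisely where the definitions of $\mathcal{H}_{nr}$ (smooth spectral curves, unramified over $D$) are used, and I would record it as the key geometric input that makes the structure group a \emph{constant} torus $(\mathbb{C}^\times)^{nr-1}$ rather than something with monodromy.
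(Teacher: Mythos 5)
Your proposal is correct and takes essentially the same approach as the paper: the paper's proof is omitted with a reference to Proposition \ref{prop Q is torsor}, and your pointwise argument --- framings of $L_b$ form a torsor under $\mathrm{Aut}(\mathcal{O}_{\widetilde{D}_b})\cong(\mathbb{C}^\times)^{nr}$ (using that $D$ is reduced and $X_b$ is unramified over $D$), quotiented by the scalar automorphisms $\mathbb{C}^\times$ of $L_b$ acting freely --- is precisely the line-bundle analogue of that proof, with the relative/\'etale bookkeeping over $\mathcal{H}_{nr}$ additionally made explicit. One minor correction: connectedness of $X_b$, which you need for $\mathrm{Aut}(L_b)=\mathbb{C}^\times$, does not follow from smoothness alone but rather from $\pi_*\mathcal{O}_{X_b}\cong\bigoplus_{i=0}^{r-1}K_X(D)^{-i}$, whose only global sections are the constants since $\deg K_X(D)>0$.
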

\begin{proof}The proof that this defines a torsor is very similar to that of Proposition \ref{prop Q is torsor} and is thus omitted. 
\end{proof}

We also get:

\begin{lemma}
The spectral correspondence induces a morphism
\begin{equation}\label{eq spectral corr M}
\sigma:\mathcal{P}\to\mathcal{M}_H(r,d).
\end{equation}
\end{lemma}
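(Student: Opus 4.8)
The plan is to construct $\sigma$ explicitly by pushing forward framed relative line bundles along the spectral cover, and then to invoke the fine moduli property of $\mathcal{M}_H(r,d)$ to upgrade this pointwise construction to a morphism. As a preliminary normalization I would fix, once and for all, a trivialization $\tau$ of the line bundle $K_X(D)|_D$; this exists because $D$ is reduced, so $K_X(D)|_D$ is a line bundle on the finite reduced scheme $D$ and is therefore trivial. Now take a point $(L,\delta')\in\mathcal{P}$ lying over $b\in\mathcal{H}_{nr}$. The spectral correspondence \eqref{eq spectral corr} already produces the stable Higgs pair $s(L)=(E,\theta)$ with $E=\pi_*L$ and $\theta$ given by multiplication by the tautological section $\lambda$. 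Since stability of a framed Higgs bundle depends only on the underlying Higgs bundle (see the stability condition after Definition \ref{def2}), any framing we attach will land in $\mathcal{M}^s_H(r,d)\subset\mathcal{M}_H(r,d)$. It thus remains to manufacture a frame $\delta\colon E_D\,\stackrel{\sim}{\longrightarrow}\,\mathcal{O}^{\oplus r}_D$ out of $\delta'$.

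First I would record the base-change identity $E|_D=(\pi_*L)|_D\,\cong\,\pi_*(L|_{\widetilde{D}_b})$, which follows from the projection formula together with $\pi$ being finite and flat and $\widetilde{D}_b=\pi^{-1}(D)\cap X_b$ (so that $\pi^*\mathcal{O}_D=\mathcal{O}_{\widetilde{D}_b}$). Applying $\pi_*$ to the framing $\delta'\colon L|_{\widetilde{D}_b}\,\stackrel{\sim}{\longrightarrow}\,\mathcal{O}_{\widetilde{D}_b}$ then gives an isomorphism $E|_D\,\stackrel{\sim}{\longrightarrow}\,\pi_*\mathcal{O}_{\widetilde{D}_b}$. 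The crux is then to identify $\pi_*\mathcal{O}_{\widetilde{D}_b}$ with $\mathcal{O}^{\oplus r}_D$ uniformly over $\mathcal{H}_{nr}$. For this I would use the standard description of the direct image of the structure sheaf of a spectral curve: as an $\mathcal{O}_X$-module one has $\pi_*\mathcal{O}_{X_b}\,\cong\,\bigoplus_{k=0}^{r-1}K_X(D)^{\otimes(-k)}$, with the $k$-th summand generated by $\lambda^k$, whence (again by the projection formula) $\pi_*\mathcal{O}_{\widetilde{D}_b}\,\cong\,\bigoplus_{k=0}^{r-1}\big(K_X(D)^{\otimes(-k)}\big)|_D$. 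Composing with the trivialization induced by $\tau$ yields the desired identification $\pi_*\mathcal{O}_{\widetilde{D}_b}\,\cong\,\mathcal{O}^{\oplus r}_D$, and I would set $\delta:=\tau\circ\pi_*\delta'$ and $\sigma(L,\delta'):=(E,\delta,\theta)$.

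The final step is to check that this assignment is algebraic, that is, defines a morphism and not merely a map on closed points. The torsor $\mathcal{P}\to\mathrm{Jac}_{\mathcal{H}}(\mathcal{X}|_{\mathcal{H}_{nr}})$ of Proposition \ref{prop framed rel line bundles} carries a universal framed relative line bundle; pushing it forward along $\pi$ in families over $\mathcal{P}$, and performing the three operations above relatively, produces a family of framed Higgs bundles on $X$ parametrized by $\mathcal{P}$. Since $\mathcal{M}_H(r,d)$ is a fine moduli space (\cite{Si1,Si2}), this family is classified by a morphism $\sigma\colon\mathcal{P}\to\mathcal{M}_H(r,d)$ inducing on points the map just described.

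I expect the only delicate point to be the \emph{uniformity} of the trivialization of $\pi_*\mathcal{O}_{\widetilde{D}_b}$ across the family. A naive frame obtained by ordering the $r$ sheets of $\widetilde{D}_b$ over each point of $D$ would be spoiled by the monodromy of the spectral cover over $\mathcal{H}_{nr}$, which permutes those sheets; the whole point of passing through the polynomial basis $1,\lambda,\dots,\lambda^{r-1}$ together with the fixed $\tau$ is that it furnishes a monodromy-invariant, hence globally defined, identification $\pi_*\mathcal{O}_{\widetilde{D}_b}\cong\mathcal{O}^{\oplus r}_D$. Everything else, namely the base change, the compatibility with the spectral correspondence $s$ in \eqref{eq spectral corr}, and the relative nature of the construction, parallels the finite-morphism computations already carried out in Proposition \ref{prop Q is torsor} and Proposition \ref{prop framed rel line bundles}.
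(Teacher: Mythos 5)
The paper gives no proof of this lemma at all---it is asserted as an immediate consequence of the spectral correspondence \eqref{eq spectral corr} and Proposition \ref{prop framed rel line bundles}---so there is nothing to match your argument against line by line; what you have written is a legitimate filling-in of the missing construction. Your skeleton is the natural one: push $\delta'$ forward along the finite flat map $\pi$, use base change to identify $(\pi_*L)|_D$ with $\pi_*(L|_{\widetilde{D}_b})$, and then fix an identification of $\pi_*\mathcal{O}_{\widetilde{D}_b}$ with $\mathcal{O}^{\oplus r}_D$ that is uniform in $b$, so that the fine moduli property applies to the resulting family. Your observation that some uniform identification must be chosen, and that the sheet-ordering choice is obstructed by the monodromy of $\widetilde{D}|_{\mathcal{H}_{nr}}\to D\times\mathcal{H}_{nr}$ while the basis $1,\lambda,\dots,\lambda^{r-1}$ together with a trivialization of $K_X(D)|_D$ is not, is a genuine subtlety that the paper glosses over.

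The caveat is that your particular normalization is at odds with how the lemma is used immediately afterwards. The paper defines $\mathcal{M}^\Delta_H(r,d):=\sigma(\mathcal{P})$ and then asserts that for points of this locus the framing $\delta=\pi_*\delta'$ is \emph{diagonal} for the eigenline decomposition $E_{x_i}\cong\bigoplus_j L^i_j$; that diagonality is what underlies the complex $\mathcal{C}^\Delta_\bullet$ built from $\mathrm{Ker}(\pi)$ and the fibre description in Proposition \ref{prop integrable subsystem}. Your $\delta$ is \emph{not} diagonal in this sense: over $x_i$ the change of basis from the idempotents of $\pi_*\mathcal{O}_{\widetilde{D}_b}$ (which is what sends eigenlines to coordinate lines) to your basis $1,\lambda,\dots,\lambda^{r-1}$ is a Vandermonde matrix in the eigenvalues of $\theta$ at $x_i$, so each eigenline gets smeared across all coordinates. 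The identification producing diagonal framings is exactly the sheet-labelling one that your monodromy remark rules out globally. So you have exposed a tension rather than resolved it, and your write-up should say which horn it takes: either (i) keep your $\sigma$, in which case the later description of $\mathcal{M}^\Delta_H(r,d)$ must be rephrased---this is harmless in substance, since post-composing the framing by $\mathrm{Aut}(\mathcal{O}_{\widetilde{D}_b})$ equals pre-composing it by the invertible global sections of $Diag_E$ (the centralizer of $\theta|_D$), so the fibres of $Q$ over your image are still orbits of the intrinsic torus and the infinitesimal computation goes through---or (ii) insist on diagonal framings, in which case $\sigma$ exists only after the finite \'etale base change of $\mathcal{H}_{nr}$ labelling the points of $\widetilde{D}_b$ over each $x_i$, and one notes that the image is independent of the labelling (monodromy acts by permutations, which preserve diagonality), so $\mathcal{M}^\Delta_H(r,d)$ is still well defined. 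As written, your claim that the construction simply ``parallels'' the computations of Propositions \ref{prop Q is torsor} and \ref{prop framed rel line bundles} hides this discrepancy.
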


\begin{definition}
The image $\sigma(\mathcal{P})\,\subset\, \mathcal{M}_H(r,d)$, under the spectral
correspondence \eqref{eq spectral corr M}, will be called the moduli
of {\it diagonally framed Higgs pairs}, and it will be denoted by
$\mathcal{M}^\Delta_H(r,d)$.
\end{definition}

Next we study the subvariety $\mathcal{M}^\Delta_H(r,d)$ infinitesimally. Given
$(E,\delta,\theta)\in\mathcal{M}_H^\Delta(r,d)$, we have that over each $x_i\in D$, there are
distinguished one dimensional subspaces $L_{j}^i$ of $E_{x_{i}}$ such that $\delta=\pi_*\delta'$ is diagonal for the identification $E_{x_i}\cong\oplus_jL_{j}^i$. Let $\pi_j:E_{x_i}\twoheadrightarrow L_{j}^i$ be the projection. Let $Diag_E\subset\mathrm{End}(E)|_D$ be the subset of endomorphisms which are diagonal in the preferred reference, namely,  $\{L_j^i \, :\, j=1,\dots, r\}$ over $x_i$. Let 
$$
\pi: \mathrm{End}(E)\twoheadrightarrow Diag_E
$$
be the morphism associating to $f$ the map
$$
\pi(f)\, :\, v\mapsto(\pi_1(f(\pi_1(v))),\dots,\pi_r(f(\pi_r(v))) )
$$
where $v\in E_{x_i}$.

Consider the complex
$$
\mathcal{C}_\bullet^\Delta:\mathrm{Ker(\pi)}\stackrel{[\theta,\cdot]}{\longrightarrow}\mathrm{End}(E)\otimes K_X(D).
$$
\begin{lemma}
Infinitesimal deformations of 
$(E,\delta,\theta)$ along $\mathcal{M}_H^\Delta(r,d)$ are parametrized by $\mathbb{H}^1(\mathcal{C}_\bullet^\Delta)$.

In particular, the dimension of $\mathcal{M}_H^\Delta(r,d)$ is 
$$\dim \mathcal{M}^\Delta_H(r,d)=2\dim\mathcal{H}.
$$
\end{lemma}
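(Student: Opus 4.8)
The plan is to run the deformation-theoretic argument of Lemma \ref{lem2} and Corollary \ref{cor tangent vs def space}, now with the diagonal-framing constraint folded into the degree-$0$ term of the governing complex, and then to read off the dimension from an Euler characteristic computation, exactly as in Proposition \ref{dim}. The starting observation is that forgetting the framing altogether, deformations of the underlying Hitchin pair $(E,\theta)$ are parametrized by $\mathbb{H}^1(\mathcal{C}^\tau_\bullet)$, the complex in \eqref{3cli} whose degree-$0$ term is $\mathrm{End}(E)$. A point of $\mathcal{M}^\Delta_H(r,d)$ carries in addition a trivialization of each eigenline $L^i_j$ over $D$; infinitesimally this contributes the $rn-1$ torus directions $Diag_E/\mathbb{C}\cdot\mathrm{Id}$, and the diagonal-over-$D$ endomorphisms acting on those trivializations must be removed from the gauge. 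Recording this removal is exactly what passing from $\mathrm{End}(E)$ to $\mathrm{Ker}(\pi)=\ker(\mathrm{End}(E)\to Diag_E)$ in degree $0$ does, so the controlling complex is $\mathcal{C}^\Delta_\bullet$. Concretely I would repeat the \v{C}ech computation of Lemma \ref{lem2} verbatim, the only change being that the $0$-cochains range over $\mathrm{Ker}(\pi)$ rather than over $\mathrm{End}(E)(-D)$, the compatibility condition between the bundle cocycle and the Higgs variation being unchanged and precisely the condition defining $\mathbb{H}^1(\mathcal{C}^\Delta_\bullet)$.

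To pin the identification down globally I would use the short exact sequence of complexes
$$
0\,\longrightarrow\,\mathcal{C}^\Delta_\bullet\,\longrightarrow\,\mathcal{C}^\tau_\bullet\,\longrightarrow\,(Diag_E\to 0)\,\longrightarrow\,0\, ,
$$
where the quotient is the length-$rn$ torsion sheaf $Diag_E=\mathrm{End}(E)/\mathrm{Ker}(\pi)$ placed in degree $0$. Since $H^1(Diag_E)=0$ and $\mathbb{H}^0(\mathcal{C}^\tau_\bullet)=\mathbb{C}\cdot\mathrm{Id}$ by simplicity of $(E,\theta)$, while $\mathbb{H}^0(\mathcal{C}^\Delta_\bullet)=0$, the associated hypercohomology sequence collapses to
$$
0\,\longrightarrow\, Diag_E/\mathbb{C}\cdot\mathrm{Id}\,\longrightarrow\, \mathbb{H}^1(\mathcal{C}^\Delta_\bullet)\,\longrightarrow\,\mathbb{H}^1(\mathcal{C}^\tau_\bullet)\,\longrightarrow\, 0\, .
$$
This matches, termwise and compatibly with the forgetful map, the tangent sequence of the $(\mathbb{C}^\times)^{nr-1}$-torsor $\mathcal{M}^\Delta_H(r,d)\to\mathcal{N}_H(r,d)$ coming from Proposition \ref{prop framed rel line bundles}: the sub is the Lie algebra of the framing torus and the quotient is $T\mathcal{N}_H(r,d)=\mathbb{H}^1(\mathcal{C}^\tau_\bullet)$, with projection equal to the differential $dQ$ (computed in the proof of Proposition \ref{prop Q is torsor}). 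The five lemma then gives $T_{(E,\delta,\theta)}\mathcal{M}^\Delta_H(r,d)\cong\mathbb{H}^1(\mathcal{C}^\Delta_\bullet)$.

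For the dimension I would show $\mathbb{H}^0(\mathcal{C}^\Delta_\bullet)=0=\mathbb{H}^2(\mathcal{C}^\Delta_\bullet)$ and compute $-\chi$. The vanishing of $\mathbb{H}^0$ is immediate: $H^0(\mathrm{Ker}(\pi))\subset H^0(\mathrm{End}(E))=\mathbb{C}\cdot\mathrm{Id}$, and $\mathrm{Id}\notin\mathrm{Ker}(\pi)$ since its diagonal part over $D$ is nonzero. For $\mathbb{H}^2$, Serre duality gives $H^1(\mathrm{End}(E)\otimes K_X(D))\cong H^0(\mathrm{End}(E)(-D))^*$, which vanishes at a generic point, where the underlying bundle is simple; this suffices since $\mathcal{M}^\Delta_H(r,d)\cong\sigma(\mathcal{P})$ is smooth and irreducible. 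Then $\dim\mathbb{H}^1(\mathcal{C}^\Delta_\bullet)=-\chi(\mathcal{C}^\Delta_\bullet)$, and from $\chi(\mathrm{Ker}(\pi))=\chi(\mathrm{End}(E))-rn=r^2(1-g_X)-rn$ together with $\chi(\mathrm{End}(E)\otimes K_X(D))=r^2(g_X-1+n)$ and \eqref{dimH} one gets $-\chi(\mathcal{C}^\Delta_\bullet)=2r^2(g_X-1)+rn(r+1)=2\dim\mathcal{H}$, as claimed.

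The main obstacle is the first step: justifying rigorously that the diagonal-framing condition produces exactly the sheaf $\mathrm{Ker}(\pi)$ in degree $0$, and that $\mathbb{H}^1(\mathcal{C}^\Delta_\bullet)$ is the honest tangent space rather than merely an abstractly isomorphic vector space. Here the naive gauge algebra (automorphisms preserving the framing, which lie in $\mathrm{End}(E)(-D)$) does not coincide with the degree-$0$ sheaf of $\mathcal{C}^\Delta_\bullet$, so the identification is cleanest if one derives the extension directly from the torus fibration $\mathcal{M}^\Delta_H(r,d)\to\mathcal{N}_H(r,d)$ and verifies that the connecting homomorphism of the hypercohomology sequence is the differential of the torus action; once this is in place, the Euler characteristic bookkeeping is routine.
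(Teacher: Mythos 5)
Your proposal is correct, and its skeleton is the same as the paper's proof: identify deformations along $\mathcal{M}^\Delta_H(r,d)$ with $\mathbb{H}^1(\mathcal{C}^\Delta_\bullet)$ by adapting the \v{C}ech argument of Lemma \ref{lem2}, prove $\mathbb{H}^0(\mathcal{C}^\Delta_\bullet)=0=\mathbb{H}^2(\mathcal{C}^\Delta_\bullet)$ at points whose underlying bundle is stable, and read off the dimension from $-\chi(\mathcal{C}^\Delta_\bullet)$, with exactly the numerics the paper uses ($\chi(Diag_E)=rn$, $\chi(\mathrm{End}(E)\otimes K_X(D))=r^2(g_X-1+n)$, giving $2r^2(g_X-1)+rn(r+1)=2\dim\mathcal{H}$).

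Where you genuinely depart from the paper is the middle step. The paper settles the identification of $\mathbb{H}^1(\mathcal{C}^\Delta_\bullet)$ with the deformation space in one sentence (``follows the same arguments as Lemma \ref{lem2}''), whereas you derive it from the short exact sequence of complexes $0\to\mathcal{C}^\Delta_\bullet\to\mathcal{C}^\tau_\bullet\to (Diag_E\to 0)\to 0$, whose hypercohomology sequence $0\to H^0(Diag_E)/\mathbb{C}\cdot\mathrm{Id}\to \mathbb{H}^1(\mathcal{C}^\Delta_\bullet)\to\mathbb{H}^1(\mathcal{C}^\tau_\bullet)\to 0$ you then match, via the five lemma, against the tangent sequence of the torsor of Propositions \ref{prop Q is torsor} and \ref{prop framed rel line bundles}. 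This buys rigor precisely where the paper is thinnest: as you correctly flag, $\mathrm{Ker}(\pi)$ is \emph{not} the naive gauge algebra $\mathrm{End}(E)(-D)$ of a framed object, so Lemma \ref{lem2} does not transfer verbatim; your torsor argument closes that gap, and it is essentially a rigorous version of what the paper only records afterwards, in a remark, as an alternative dimension count. Two minor caveats. First, your $\mathbb{H}^0$-vanishing invokes $H^0(\mathrm{End}(E))=\mathbb{C}\cdot\mathrm{Id}$, i.e.\ simplicity of $E$, so it only applies where the underlying bundle is stable; an argument valid on all of $\mathcal{H}_{nr}$ is that a section of $\mathrm{Ker}(\pi)$ commuting with $\theta$ is automatically diagonal over $D$ (distinct eigenvalues there), hence vanishes on $D$, and then Lemma \ref{lem0} kills it. Since you, like the paper, only claim the tangent identification at points with stable underlying bundle, and the dimension count needs only a generic point, this is harmless. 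Second, the subobject in your exact sequence should be written $H^0(Diag_E)/\mathbb{C}\cdot\mathrm{Id}$ (global sections of the torsion sheaf), of dimension $nr-1$ as required; since $Diag_E$ is supported on finitely many points the abuse is inessential.
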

\begin{proof}
The proof that the complex parametrizes the right deformations follows the same arguments as Lemma \ref{lem2}.

We see that $\mathbb{H}^0(\mathcal{C}_\bullet^\Delta)=0$ just as in Corollary \ref{cor0}. If the underlying bundle is stable, we have moreover that 
$\mathbb{H}^2(\mathcal{C}_\bullet^\Delta)=0$ (as the dual complex has no zero hypercohomology). 

So the tangent space at a point with underlying stable bundle is identified with $\mathbb{H}^1(\mathcal{C}^\Delta_\bullet)$.

To compute dimensions, by vanishing of all hypercohomology groups other than the first one, it follows that
$$
\dim\mathbb{H}^1(\mathcal{C}^\Delta_\bullet)=\chi(\mathrm{End}(E)\otimes K_X(D))-\chi(\mathrm{Ker}(\pi)).
$$
By definition 
$$
\chi(\mathrm{Ker}(\pi))=\chi(\mathrm{End}(E))-\chi(Diag_E)=-r^2(g-1)-nr.
$$
Also:
$$
\chi(\mathrm{End}(E)\otimes K_X(D))=r^2(g-1+n).
$$
So 
$$
\dim\mathbb{H}^1(\mathcal{C}^\Delta_\bullet)=r^2(2g-2)+nr(r+1)
$$
and the statement follows.
\end{proof}

\begin{remark}
The dimension of $\mathcal{M}_H^{\Delta}(r,d)$ can also be computed using that $\mathcal{M}^\Delta_H(r,d)$ is a $(\mathbb{C}^{\times})^{nr-1}$ torsor over $\mathcal{N}_H(r,d)$. Indeed
$$
\dim \mathcal{M}_H^{\Delta}(r,d)=\dim \mathcal{P}=nr-1+\dim \mathrm{Jac}_{\mathcal{H}}(\mathcal{X}).
$$
Also, by \cite[(3.1.8)]{Bo}
$$
\dim \mathrm{Jac}_{\mathcal{H}}=\dim\mathcal{N}_H(r,d)=r^2(2g-2+n)+1.
$$
Finally,
$$\dim \mathcal{M}_H^{\Delta}(r,d)=2\left(r^2(g-1)+n\frac{r(r+1)}{2}\right)=2\dim\mathcal{H}.
$$
\end{remark}

\begin{remark}\label{rk spectrally framed poisson}
Note that as non-degeneracy of $\Psi$ is a consequence of Serre duality 
$$
H^1(\mathrm{End}(E)(-D))\cong H^0(\mathrm{End}(E)\otimes K_X(D))^*, 
$$
and $h^1(\mathrm{Ker}(\pi))<h^1(\mathrm{End}(E)(-D))$ whenever $E$ is stable, it follows that $\Psi|_{\mathcal{M}_H^{\Delta}}$ may degenerate on a subspace.
\end{remark}

\begin{proposition}\label{prop integrable subsystem}
	Let
	 $$
	 h^\Delta:\mathcal{M}_H^{\Delta}(r,d)\longrightarrow\mathcal{H}
	 $$
	 be the restriction of the Hitchin map. Then:
\begin{enumerate}
\item{} $h^\Delta$ is surjective,

\item{} the generic fibers are semiabelian varieties, i.e. an extension of an abelian variety by a torus. More precisely 
$$
(h^\Delta)^{-1}(b)=
\{(L,\gamma)\ :\ L\in\mathrm{Jac}(X_{b}),\gamma=
(\gamma_1,\dots,\gamma_r); \gamma^{j}_i:L_{y^{j}_i}\cong \mathcal{O}_{y^{j}_i}, y^{j}_i\in
\pi_b^{-1}(x_i)\}/\mathbb{C}^\times\, .
 $$
\end{enumerate}
\end{proposition}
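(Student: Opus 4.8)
The plan is to reduce the entire statement to the spectral correspondence and then recognize the fiber as a generalized Jacobian. First I would fix a generic point $b\,\in\,\mathcal{H}_{nr}$, so that $X_b$ is smooth and $\pi_b:X_b\to X$ is unramified over $D$; consequently each $\pi_b^{-1}(x_i)$ consists of $r$ distinct points $y^1_i,\dots,y^r_i$ and $\widetilde{D}_b$ is a reduced divisor of $nr$ points. By construction the composite of the spectral correspondence \eqref{eq spectral corr M} with $h^\Delta$ is the projection $\mathcal{P}\to\mathcal{H}_{nr}$, and the relative Jacobian underlying $\mathcal{P}$ is non-empty over every such $b$. This gives assertion (1) over the dense open subset $\mathcal{H}_{nr}$, which is what the generic fiber statement requires.

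For (2), the key step is to match a diagonal framing of $E\,=\,\pi_{b*}L$ with a framing of $L$ along $\widetilde{D}_b$. Since $\pi_b$ is unramified over $D$, the eigenvalues of $\theta|_{x_i}$ are the values of the tautological section $\lambda$ at the $r$ preimage points, and these are pairwise distinct; hence there is a canonical eigenline decomposition $E_{x_i}\,=\,\bigoplus_{j=1}^{r}L^i_j$ with $L^i_j\,=\,\ker(\theta|_{x_i}-\lambda(y^j_i))$, and under pushforward $L^i_j$ is identified with the fiber $L_{y^j_i}$. These are precisely the distinguished subspaces entering the definition of $\mathcal{M}^\Delta_H(r,d)$. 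A framing of $E$ at $x_i$ that is diagonal in this reference is therefore the same datum as a tuple of trivializations $\gamma^j_i:L_{y^j_i}\xrightarrow{\sim}\mathcal{O}_{y^j_i}$, that is, a framing of $L$ along $\widetilde{D}_b$ in the sense of Proposition \ref{prop framed rel line bundles}.

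With this dictionary in place, the spectral correspondence \cite[Proposition 3.6]{BNR} identifies $L\in\mathrm{Jac}(X_b)$ with the underlying stable Higgs bundle $(E,\theta)$, whose only automorphisms are the scalars $\mathbb{C}^\times$. Exactly as in Proposition \ref{prop Q is torsor}, these scalars act on framings by simultaneous scaling, so the fiber $(h^\Delta)^{-1}(b)$ is the set of framed line bundles $(L,\gamma)$ modulo $\mathbb{C}^\times$, which is the description in the statement. Since framed line bundles form a group under tensor product, with trivializations multiplying pointwise, this quotient is the generalized Jacobian of $X_b$ with modulus $\widetilde{D}_b$, fitting into the exact sequence
$$
1\,\longrightarrow\,(\mathbb{C}^\times)^{nr}/\mathbb{C}^\times\,\longrightarrow\,(h^\Delta)^{-1}(b)\,\longrightarrow\,\mathrm{Jac}(X_b)\,\longrightarrow\,1,
$$
where $(\mathbb{C}^\times)^{nr}/\mathbb{C}^\times\,\cong\,(\mathbb{C}^\times)^{nr-1}$. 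This exhibits the fiber as an extension of the abelian variety $\mathrm{Jac}(X_b)$ by a torus, that is, a semiabelian variety, compatibly with the torsor description of Proposition \ref{prop hitchin fibers}.

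The main obstacle I expect is the canonical identification in the second paragraph: one must verify that the preferred reference $\{L^i_j\}$ used to define $\mathcal{M}^\Delta_H(r,d)$ genuinely coincides with the eigenline decomposition coming from the spectral data, and that diagonal framings modulo $\mathbb{C}^\times$ correspond to the torsor $\mathcal{P}$ of Proposition \ref{prop framed rel line bundles} in a way compatible with $\sigma$. Unramifiedness over $D$ is precisely what makes the eigenline decomposition canonical, so once this compatibility is recorded, both the surjectivity and the semiabelian structure of the fibers follow formally from the spectral correspondence and the generalized-Jacobian exact sequence.
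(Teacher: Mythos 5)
Your proof is correct and takes essentially the same route as the paper, whose entire argument is a citation: surjectivity from that of $\widetilde{h}$, and semiabelianness of the fibers from Proposition \ref{prop framed rel line bundles} together with \cite[Proposition 7.2.1]{BSU}; your eigenline dictionary and the generalized-Jacobian exact sequence
$1\to(\mathbb{C}^\times)^{nr}/\mathbb{C}^\times\to(h^\Delta)^{-1}(b)\to\mathrm{Jac}(X_b)\to 1$
just make explicit what those citations encapsulate. Your restriction of the surjectivity claim to $\mathcal{H}_{nr}$ is, if anything, more careful than the paper's one-line assertion, since $\mathcal{M}^\Delta_H(r,d)=\sigma(\mathcal{P})$ lies over $\mathcal{H}_{nr}$ by construction.
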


\begin{proof}
 Surjectivity follows immediately from that of $\widetilde{h}$. As for
semiabelian-ness of the fibers, it follows from Proposition \ref{prop framed rel line bundles},
together with \cite[Proposition 7.2.1]{BSU}. 
\end{proof}

\subsection{Comparison of two integrable systems: completing the Hitchin system}

In this section we compare the Hitchin systems $h$ and $\widetilde{h}$ defined in \eqref{eq 
Hitchin map} and \eqref{eq hitchin map N} respectively.

First of all, note that the symplectic structure on $\mathcal{M}_H(r,d)$ is compatible with the Poisson structure of $\mathcal{N}_H(r,d)$. Now, in what follows we argue that this is done somewhat perpendicularly to the Hitchin fibers of $\mathcal{N}_H(r,d)$. In particular, in order to obtain extra action angle coordinates to (locally) recover, for example, the semiabelian subvarieties appearing in Proposition \ref{prop integrable subsystem}, new functions need to be added to the system.

By Proposition \ref{prop Q is torsor}, we have that locally
$$
\mathcal{O}_{\mathcal{M_H}}=\mathcal{O}_{\mathcal{N}_{H}}\otimes_{\mathbb{C}}\mathcal{O}_{\mathrm{GL(r,\mathbb{C})/\mathbb{G}_m}}\, .
$$
 Consider
the embedding 
\begin{equation}\label{eq embedding into PGL}
\mathrm{GL}(r,\mathbb{C})^n/\mathbb{C}^\times\hookrightarrow \mathrm{PGL}(rn,\mathbb{C}).
\end{equation}

\begin{proposition}\label{prop completed int system}Let $f\in \mathcal{O}_{\mathrm{GL(r,\mathbb{C})/\mathbb{G}_m}}$ be a local function on the fibers of $Q$. Then
$$
\{f,h_i\}=0
$$
for all $i=1,\dots, N$.
In particular, consider the (local) coordinate functions 
$x_{i}$ on the maximal torus of an affine open subset of $\mathrm{GL}(r,\mathbb{C})^n/\mathbb{C}^\times$ (defined for instance via the embedding \eqref{eq embedding into PGL}). Then the following set of local functions Poisson commutes
$$
\{x_{i}, h_k\}=0
$$
for $1\leq i\leq nr-1,k=1,\dots,N$.

Moreover, the associated Hamiltonian vectors are linear on the fibers of the local map 
$$
(h_1,\dots, h_N,x_{1},\dots, {x_{nr-1}})\,:\,\mathcal{M}_H(r,d)\longrightarrow\mathbb{C}^{r^2(g-1)+\frac{r(r+1)}{2}n+nr-1}.
$$
\end{proposition}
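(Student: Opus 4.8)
The plan is to establish the three assertions of Proposition~\ref{prop completed int system} in sequence, leveraging the Poisson-compatibility of the forgetful map $Q$ from Theorem~\ref{thm poisson map}. First I would prove the vanishing $\{f,h_i\}=0$ for any $f$ pulled back from the fibers of $Q$. The key observation is that each Hitchin function $h_i$ factors through $Q$, since $h=\widetilde{h}\circ Q$ by the identity $\widetilde{h}(Q(E,\delta,\theta))=h(E,\delta,\theta)$ recorded in Corollary~\ref{corB}. Because $f$ is constant along the base $\mathcal{N}_H(r,d)$ (it depends only on the fiber coordinate in $\mathrm{GL}(r,\mathbb{C})^n/\mathbb{C}^\times$) while $h_i$ is constant along the fibers of $Q$, the Hamiltonian vector field $X_{h_i}$ should be tangent to the $\mathcal{N}_H$-directions. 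Concretely, I would compute $\{f,h_i\}=X_f\,h_i$ and argue that $X_f$, being generated by a fiber function, annihilates the pullback $Q^*\widetilde{h}_i$; equivalently $dh_i(X_f)=\Psi(X_f,X_{h_i})$ vanishes because $X_{h_i}$ lies in the image of $Q$-horizontal directions on which the symplectic form restricts compatibly with the Poisson structure of $\mathcal{N}_H$.

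Second, the statement about the coordinate functions $x_i$ on the maximal torus is an immediate specialization: each $x_i$ is a local function on the fiber $\mathrm{GL}(r,\mathbb{C})^n/\mathbb{C}^\times$, hence of the form $f$ just treated, so $\{x_i,h_k\}=0$ follows directly. I would also note that the torus coordinates Poisson-commute among themselves, $\{x_i,x_j\}=0$, since the restriction of $\Psi$ to the fiber directions degenerates precisely on the torus as observed in Remark~\ref{rk spectrally framed poisson}; the abelian structure of the maximal torus makes the $x_i$ act as commuting coordinates. This gives a collection of $N+(nr-1)$ Poisson-commuting functions, matching the target dimension $r^2(g-1)+\tfrac{r(r+1)}{2}n+nr-1$ of the asserted map.

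Third, for the linearity of the Hamiltonian vector fields on the fibers, I would argue separately for the two families of functions. For the $h_k$, linearity on the Jacobian directions follows from the classical theory: the fibers of $\widetilde{h}$ are Jacobians of spectral curves and the Hitchin Hamiltonians are well known to generate linear flows there (Proposition~\ref{prop hitchin fibers} together with the spectral correspondence). For the torus coordinates $x_i$, the Hamiltonian flows act by translation on the $(\mathbb{C}^\times)^{nr-1}$ torus factor described in Proposition~\ref{prop framed rel line bundles}, which is linear by construction since the torus acts on itself by multiplication. Assembling these, the combined map has Hamiltonian vector fields that are linear on the semiabelian fibers of Proposition~\ref{prop integrable subsystem}.

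The main obstacle I anticipate is the first vanishing statement, specifically making rigorous the claim that $X_f$ annihilates $h_i$ using only the Poisson-compatibility of $Q$ rather than a full symplectic reduction argument. The subtlety is that $\mathcal{M}_H^s(r,d)$ is symplectic whereas $\mathcal{N}_H(r,d)$ is only Poisson, so $X_{h_i}$ on $\mathcal{M}_H$ need not project cleanly to a Hamiltonian field on $\mathcal{N}_H$; one must verify that $dQ(X_{h_i})$ is well-defined and equals the Hamiltonian field $\widetilde{X}_{\widetilde{h}_i}$ of the corresponding function downstairs. I would handle this by invoking \eqref{pch}: since $h_i=Q^*\widetilde{h}_i$, its differential factors as $dh_i=(dQ)^*d\widetilde{h}_i$, and the Poisson-compatibility identity lets me express $\Psi(X_f,X_{h_i})$ in terms of the Poisson bracket $\{\,\cdot\,,\widetilde{h}_i\}$ on $\mathcal{N}_H$ evaluated against the image of $X_f$, which vanishes because $X_f$ projects to zero under $dQ$ (as $f$ is a fiber coordinate). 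This reduces the geometric claim to the already-established algebraic compatibility and the factorization of $h$ through $Q$.
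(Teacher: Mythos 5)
Your reduction of $\{f,h_i\}=0$ to the Poisson compatibility of $Q$ has a genuine gap, and it sits exactly at the step you flag as the main obstacle: the claim that ``$X_f$ projects to zero under $dQ$ (as $f$ is a fiber coordinate)'' is not just unproved, it is false. At a point of $\mathcal{M}^s_H(r,d)$, write $T\mathcal{M}^s_H(r,d)=V\oplus H$ with $V$ the tangent space to the fiber of $Q$ and $H$ the horizontal space of your local trivialization; the differentials of fiber functions span the annihilator of $H$, so requiring all their Hamiltonian fields to lie in $V$ is equivalent, by nondegeneracy of $\Psi$ and a dimension count, to $\Psi(V,H)=0$. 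But then $\Psi|_H$ would be nondegenerate, and the compatibility \eqref{pch} would force the induced Poisson tensor $\widetilde{\Psi}'$ on $\mathcal{N}_H(r,d)$ to be injective, i.e.\ symplectic --- contradicting the fact, recalled in the introduction, that the Bottacin--Markman structure is degenerate whenever $D\neq 0$. The cotangent-bundle model already shows the intuition is backwards: on $T^*N$ the fiber coordinate $p_j$ has Hamiltonian field $\pm\partial/\partial q_j$, which projects isomorphically to the base, not to zero; it is pullbacks of base functions, not fiber coordinates, that can have vertical Hamiltonian fields, and even that requires coisotropic fibers, which the fibers of $Q$ are not.

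The gap cannot be repaired by formal Poisson-functoriality, because your argument uses nothing about the $h_i$ beyond the factorization $h_i=Q^*\widetilde{h}_i$: if it were valid it would prove that \emph{every} function pulled back from $\mathcal{N}_H(r,d)$ commutes with every fiber function. That statement is false. Since $\widetilde{\Psi}'$ is degenerate, there exist local Casimirs $c$ with $dc\neq 0$; then \eqref{pch} gives $dQ(X_{Q^*c})=\widetilde{\Psi}'(dc)=0$, so $X_{Q^*c}$ is vertical and nonzero (nonzero because $\widetilde{\Psi}$ is an isomorphism and $(dQ)^*$ is injective, $Q$ being a submersion by Proposition \ref{prop Q is torsor}), whence $\{f,Q^*c\}=-df(X_{Q^*c})\neq 0$ for a suitable fiber function $f$. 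So the asserted vanishing is a special property of the Hitchin Hamiltonians that has to be checked by hand, and this is precisely what the paper's proof does: it represents $df$ in the hypercohomological description of the cotangent space by a class $(\xi_i,0)$ with $\xi_i$ in the image $V$ of $H^0(\mathrm{End}(E)\otimes K_X(D))\to H^0(\mathrm{End}(E)\otimes K_X(D)|_D)$, represents $dh_k$ by $(0,k\theta^{k-1})$ with $k\theta^{k-1}$ turned into a $1$-cocycle of $\mathrm{End}(E)(-D)$ via a local frame $dz/z$ of $K_X$, and verifies that the Serre-duality pairing of these two representatives vanishes. Your secondary steps inherit the problem: $\{x_i,x_j\}=0$ is obtained in the paper from the fact that Serre duality restricts to the Killing form along the fibers of $Q$, not from the degeneracy observed in Remark \ref{rk spectrally framed poisson}, and the final linearity claim requires knowing that $X_{h_k}$ has no component along the fibers of $Q$, which is again part of what the explicit cocycle computation --- and not Theorem \ref{thm poisson map} alone --- establishes.
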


\begin{proof}
Since the computation is local, we may assume the torsor \eqref{eq Q} is trivial. 

Consider the following commutative diagram:
\begin{equation}\label{eq tangent spaces}
\xymatrix{
&&H^0({\rm End}(E)|_D)/\mathbb{C}\ar[d]_i\\
H^0({\rm End}(E)\otimes K_X(D))\ar[r]\ar@{=}[d]&\mathbb{H}^1(\mathcal{C}_\bullet) \ar[r]\ar[d]_{dQ}&H^1({\rm End}(E)(-D))\ar[d]\\
H^0({\rm End}(E)\otimes K_X(D))\ar[r]&\mathbb{H}^1(\mathcal{C}_\bullet^\tau) \ar[r]&H^1({\rm End}(E))
}
\end{equation}
The second and third lines give a decomposition of the tangent spaces at a point $(E,\delta,\theta)\in\mathcal{M}_H(r,d)$ and $Q(E,\delta,\theta)=(E,\theta)\in\mathcal{N}_H(r,d)$ respectively, while the first row is the tangent space of $Q^{-1}(E,\theta)$. 

Take $v\in T_{(E,\delta,\theta)}\mathcal{M}_H$ and represent it as $(\mu_{ij},\eta_i)$. Note that
$$
\mu_{ij}=(\sigma_i-\sigma_j,\, \rho_{ij})\, ,
$$
where $\sigma_i\in H^0({\rm End}(E)(-D))$ are local functions with the same restriction to $D$,
namely $\sigma_i-\sigma_j\in \mathrm{Im}\left\{H^0({\rm End}(E)|_D)\longrightarrow H^1({\rm End}(E)(-D))\right\}$,
and $\rho_{ij}\in H^1({\rm End}(E))$.

Now, clearly, for $f\in \mathcal{O}_{\mathrm{GL(r,\mathbb{C})/\mathbb{G}_m}}$, we have
$$
df(\mu_{ij},\eta_i)=df(\sigma_i-\sigma_j,\eta_i)=(df(\sigma_k|_D),0).
$$
In order to assign to $df$ an element of
 $\mathbb{H}^1(\mathcal{C}^{\bullet})$, consider the diagram dual to \eqref{eq tangent spaces}:
\begin{equation}\label{dual commutative diag}
\xymatrix{
V&&\\
H^0({\rm End}(E)\otimes K_X(D))\ar[r]\ar[u]&\mathbb{H}^1(\mathcal{C}^\bullet) \ar[r]&H^1({\rm End}(E)(-D))\\
H^0({\rm End}(E)\otimes K_X)\ar[u]\ar[r]&\mathbb{H}^1(\mathcal{C}_\bullet^\tau)^* \ar[r]\ar[u]_{dQ^t}&H^1({\rm End}(E)(-D))\ar@{=}[u]
}
\end{equation}
where $V\subset H^0({\rm End}(E)\otimes K_X(D)|_D)$ is the image of the restriction map $$H^0({\rm End}(E)\otimes K_X(D))\to
H^0({\rm End}(E)\otimes K_X(D)|_D)\, .$$ Note that
Serre duality pairing between $H^0(\mathrm{End}(E)\otimes K_X(D))$ and
$H^1(\mathrm{End}(E)(-D))$ identifies
$$H^0(\mathrm{End}(E)|_D)\, \stackrel{\sim}{\longrightarrow}\,
H^0(\mathrm{End}(E)\otimes K_X(D)|_D)^*,\,
H^0(\mathrm{End}(E)\otimes K) )\, \stackrel{\sim}{\longrightarrow}\,
H^1(\mathrm{End}(E))^*\, .$$ 

So $df$ is represented by an element
$(\xi_i,0)$ where $\xi_i\in V\subset H^0(\mathrm{End}(E)\otimes K_X(D)|_D)$.
We have $(\xi_i,0)\in\mathbb{H}^1(\mathcal{C}^\bullet)$, as $\xi_i-\xi_j
\,=\,[0,\,\theta]$.

Now, $dh_k$ is represented by $(0,k\theta^{k-1})$, where $k\theta^{k-1}$ is interpreted as a 1-cocycle of $\mathrm{End}(E)(-D)$ as follows:  let $\{ U_{i}\}_{i=1}^{d}$ be a open covering of $X$, choose a point $p\in X$ with local coordinate $z$ on $U_{ij}$, the intersection of $U_{i}$ and $U_{j}$, such that $K_X(D)|_{U_{ij}}\cong\mathcal{O}_{U_{ij}}$. Let $U'_{ij}=U_{ij}\setminus\{p\}$. Then
$$
\theta^{k-1}|_{U'_{ij}}\in H^0(U'_{ij},\mathrm{End}(E)).
$$
Moreover, $K|_{U_{ij}}\cong\mathcal{O}_{U_{ij}}(-D)$, so the meromorphic form $dz/z$ generates
$H^1(X,\,K_X)$ and moreover $\theta^kdz/z\in \Gamma(U_{ij},\mathrm{End}(E)(-D))$. We may check
that this defines a $1$-cocycle. 

Now, on $U_{ij}$, we have $K\cong\mathcal{O}(-D)$, and the pairing of $\theta^kdz/z$ with any element of $V$ will be identically zero. It thus follows that 
$$
\{f,h_k\}=0.
$$

Finally, since Serre duality restricts on the fibers of  $Q$  to the 
Killing form, the functions on the moduli space, given by the coordinate functions of
the maximal torus, Poisson commute.

The last statement follows because of (a) linearity of $X_{h_k}$ on the Jacobian of the 
spectral curve, (b) vanishing of $X_{h_k}$ on the fibers of $Q$ , (c) linearity of 
$X_{x_i}$ on $(\mathbb{C}^\times)^{nr-1}$ and (d) vanishing of $X_{x_i}$ on 
$\mathrm{Jac}(X_b)$.
\end{proof}

\section*{Acknowledgements}

The first author is partially supported by a J. C. Bose Fellowship. The second author wishes to thank Tata Institute for Fundamental Research for its hospitality while part of this work was 
developed. The third author was supported by the Marie Curie Project GEOMODULI of the Programme FP7/PEOPLE/2013/CIG, Project 
Number 618471 and the Fondation Nationale des Sciences, National Centre of Competence in Research, SwissMAP project. 

%%%%%%%%%%%%%%%%%%%%%%%%%%%%%%%%%%%%%%%%%%%%%%%%%%%%%%%%%%%%%

\end{document}